\newcommand{\supp}[1]{\text{\normalfont supp} {(#1)}}
\newcommand{\bbrD}{\mathbb R^D}
\newcommand{\bbr}{\mathbb R}
\newcommand{\biggp}[1]{\bigg({#1}\bigg)}
\newcommand{\biggb}[1]{\bigg[{#1}\bigg]}
\newcommand{\bigga}[1]{\bigg|{#1}\bigg|}
\newcommand{\diam}{\text{\normalfont diam}}
\newcommand{\ldimqhat}[2]{\underline{\widehat{\dim}}_Q^{#1}{(#2)}}
\newcommand{\dimhat}[2]{\widehat{\dim}_Q^{#1}{(#2)}}
\newcommand{\dimtilde}[2]{\widetilde{\dim}_Q^{#1}{(#2)}}
\newcommand{\udimqhat}[2]{\overline{\widehat{\dim}}_Q^{#1}{(#2)}}
\newcommand{\ldimqtilde}[2]{\underline{\widetilde{\dim}}_Q^{#1}{(#2)}}
\newcommand{\udimqtilde}[2]{\overline{\widetilde{\dim}}_Q^{#1}{(#2)}}
\newcommand{\dimp}[1]{\dim_P^*({#1})}
\newcommand{\dimh}[1]{\dim_H^*({#1})}
\newcommand{\ldimb}[2]{\underline{\dim}_B^{#1}{(#2)}}
\newcommand{\udimb}[2]{\overline{\dim}_B^{#1}{(#2)}}
\newcommand{\ldimq}[2]{\underline{\dim}_Q^{#1}{(#2)}}
\newcommand{\udimq}[2]{\overline{\dim}_Q^{#1}{(#2)}}
\newcommand{\conv}[1]{\text{conv}(#1)}
\newtheorem{theorem}{Theorem}[]
\newtheorem{example}[theorem]{Example}
\newtheorem{lemma}[theorem]{Lemma}
\newtheorem{proposition}[theorem]{Proposition}
\newtheorem{corollary}[theorem]{Corollary}
\newtheorem{remark}{Remark}
\begin{document}
\title{Asymptotics of Constrained Quantization for Compactly Supported Measures} 
\author[Qian]{Chenxing Qian}
\date{\today}
\address{University Hall, Room 4005, 1402 10th Ave S, Birmingham, AL, 35294-1241}
\email{qianchenxing@outlook.com, cqian@uab.edu}
\maketitle

\let\thefootnote\relax
\footnotetext{MSC2020: Primary 41A29; Secondary 37A50, 60D05, 28C20, 31E05.} 

\begin{abstract}
We investigated the asymptotics of high-rate \textit{constrained quantization} errors for a compactly supported probability measure $P$ on $\bbrD$ whose quantizers are confined to a closed set $S$. The key tool is the metric projection $\pi_S$ that assigns each source point to its nearest neighbor in $S$, allowing the errors to be transferred to the projection $\pi_S(K)$, where $K=\supp{P}$. For the upper estimate, we establish a projection pull‑back inequality that bound the errors by the classical covering radius of $\pi_S(K)$. For the lower estimate, a weighted distance function enables us perturb any quantizer element lying on $\pi_S(K)$ slightly into $S\setminus \pi_S(K)$ without enlarging the error, provided $\pi_S(K)$ is nowhere dense (automatically true when $S\cap K=\varnothing$). Under mild conditions on the push-forward measure $T_*P$, obtained via a measurable selector $T$, we derive a uniform lower bound. If this set is Ahlfors‑regular of dimension $d$, the error decays like $n^{-1/d}$ and every constrained quantization dimension equals $d$. The two estimates coincide, giving the first complete dimension comparison formula for constrained quantization and closing the gap left by earlier self‑similar examples by Pandey–Roychowdhury while extending classical unconstrained theory to closed constraints under mild geometric assumptions.
\end{abstract} 
\keywords{Constrained Quantization; Quantization Dimension; Ahlfors Regularity; Metric Projection; Geometric Measure Theory; Wasserstein Distance; High-Rate Asymptotics}
\bigskip
\section{Introduction}\label{section:introduction}
\subsection{Background}
\textit{Quantization} provides a quantitative framework for approximating distributions by discrete sets. The concept and framework of quantization has deep roots in information theory, approximation theory, and geometric measure theory \cite{gray,dereich2013constructive,kessebohmer2023quantization}. It has also been studied in the context of dynamical systems \cite{graf,zhu2023asymptotics}, probability theory \cite{luschgy2023marginal,xu2019best}, and optimal transport \cite{kloeckner2012}.The classical unconstrained setting assumes that quantizers can be chosen freely from the ambient space, often yielding optimal asymptotic rates governed by the dimension of the support (see \cite{graf,zador1982,kloeckner2012,dai2008quantization}). In many applications, however, quantizers are required to be confined in a prescribed constraint set, either due to geometric, topological, or algorithmic restrictions. Such constrained optimization problems arise, for instance, in signal processing with manifold constraints \cite{Mondal}, optimization over admissible configurations, or geometric learning on structured spaces. Pandey and Roychowdhury extended the theory to \text{constrained quantization} in \cite{pandey2023,pandey2024constrained, pandey2023conditional}, initiating a new line of work in which the quantizers are confined in a closed Euclidean subset $S\subseteq\bbrD$. This introduces new analytical challenges: the optimal quantization error will be drastically different if constrained to a geometrically irregular set, and classical estimates in unconstrained cases may no longer apply directly.

The quantization dimension provides a precise way to describe the high-rate asymptotic behavior of quantization error \cite{graf,zador1982,zhu2023asymptotics}, that is, how quickly the error decays as the number of quantizers increases. In the unconstrained setting, this notion has been extensively studied under both absolutely continuous distributions and fractal measures \cite{graf2000asymptotics,graf2002quantization,graf2012local,zhu2023asymptotics,kessebohmer2017upper}, and is known to exhibit deep connections with classical dimensional quantities such as the Hausdorff dimension, upper and lower box dimensions, and various forms of Rényi dimensions of the underlying measure \cite{kessebohmer2023quantization, hua1995dimension,kessebohmer2007stability,zhu2008quantization,dai2008quantization}. These relationships unravel not only the fractal structure of the measure, but also reflect how efficiently the measure can be approximated by discrete sets. Briefly, for sets with regular structure where classical dimensions coincide as $d$, then typically, the quantization errors asymptotically decay in the order of $n^{-1/d}$, in line of Zador-type estimates \cite{zador1982}.

In this article, we develop a new framework for analyzing quantization dimensions under closed geometric constraints and provide a general dimension comparison framework that extends the work of Pandey and Roychowdhury to broader regular settings. Using techniques in geometric measure theory, we derive sharp upper and lower bounds for the constrained quantization dimension under certain regularity conditions. Our results extend known results in the unconstrained setting to a broad class of constrained configurations, and demonstrated how the geometric complexity affect the asymptotics of constrained quantization errors. Techniques used in this article are integrated from geometric measure theory, projection geometry, and the asymptotic theory of discrete approximation. 

\subsection{Our Contributions}
Recent work on constrained quantization, notably the series of Pandey and Roychowdhury, established existence and partial upper‐rate bounds for a few self‑similar measures under rather restrictive (often convex or highly regular) constraints. However, a general dimension formula under mild geometric assumptions is still unrevealed. The aim of this article is to establish sharp bounds for the upper and lower constrained quantization dimensions, expressed in terms of the Hausdorff, box, and Ahlfors dimensions, thereby bridging the gap between quantization theory and geometric measure theory. In short, we will,
\begin{itemize}
    \item establish upper and lower bounds of $\udimqhat{r}{P;S}$, $\ldimqhat{r}{P;S}$, $\udimqtilde{r}{P;S}$ and $\ldimqtilde{r}{P;S}$ in terms of $\dim_H(\pi_S(K))$, $\ldimb{}{\pi_S(K)}$, $\udimb{}{\pi_S(K)}$ and the Ahlfors dimension of $\pi_S(K)$, under some regularity conditions (see Section~\ref{section:intro_notations_definitions} for explanations on notations).
\end{itemize}
In Section~\ref{section:main_result}, we present the main results along with the regularity conditions required for the probability measure and constraint set. The proofs of these results rely on the projection multivalued map introduced in Section~\ref{section:introduction}. In Section~\ref{section:preliminary_topology}, we further explore the topological properties of this projection, which are essential for the subsequent analysis. Section~\ref{section:preliminary_constrained_quantization}, we discover the fundamental properties of the errors of $r-$th covering radius defined in \eqref{eqn:defn_generalized_e_n_r} for the constrained quantization, as an extension of Section~\ref{section:basic_properties_quantization}. Finally, in Section~\ref{section:upper_quantization_dimension_estimate} and~\ref{section:lower_quantization_dimension_estimate}, we provide detailed technical estimates for the upper and lower bounds, with a focus on compactly supported probability measures. In Section~\ref{section:summary}, we summarize the key ideas in the upper and lower estimation, and in Section~\ref{section:further_directions} we present conjectural arguments based on the innovations and limitations of this article. We prove auxillary lemmas in Section~\ref{app_supp_lemmas_proofs} and sumamrize notations and definitions in Section~\ref{app_supp_notation_definitions}.

\subsection{Main Results and Assumptions}\label{section:main_result}
In the following content of this article, we will address the case when $S$ is the closed constraint, $P$ is a Borel probability compactly supported on $K\Subset\bbrD$. Before reading the rest of this subsection, the audience are encouraged to be directed to Section~\ref{section:intro_notations_definitions} and Section~\ref{app_supp_notation_definitions} for explanation of notations. Let $\pi_S(K)$ is the projection of $K$ to $S$. To guarantee the $d_H$ approximation property ($d_H(\cdot,\cdot)$ refers to the Hausdorff distance) of the quantizers to $\pi_S(K)$, which is essential in the pulling back process in obtaining upper asymptotics (see Lemma~\ref{lemma:condtn_lem_asymp_e_n_infty}), we require the condition \eqref{cond_A}: 
\begin{equation}
\forall \varepsilon>0, \quad\forall x\in \pi_S(K),\quad P(\pi_S^{-1}(B(x,\varepsilon)\cap S))>0   \tag{U1}
\label{cond_A}
\end{equation}
and the condition \eqref{cond_B}:
\begin{equation}
\text{for $\alpha_n$ the $n-$quantizer of $e_{n,\infty}(P;S)$}, \quad\lim\limits_{n\rightarrow\infty}d_H(\alpha_n,\pi_S(K))=0  \tag{U2}
\label{cond_B}
\end{equation}
The failure of \eqref{cond_A} will be discussed in Example~\ref{example:cond_a_fails}. The connections of \eqref{cond_A} and \eqref{cond_B} will be discussed in Lemma~\ref{lemma:approximation}. 
To establish a lower bound for the constrained quantization dimensions in terms of $\dim_H(\pi_S(K))$, we require condition \eqref{cond_D}, which represents a strengthened version of condition \eqref{cond_A}. This enhanced condition is motivated by the concept of lower Ahlfors regularity as demonstrated in Lemma~\ref{lemma:upperbound_limsup_n_e_n_infty_D_prime}.
\begin{equation}
    \label{cond_D}\exists C>0,\quad\forall \varepsilon>0, \quad\forall x\in \pi_S(K),\quad P(\pi_S^{-1}(B(x,\varepsilon)\cap S))>C\varepsilon^s\text{ where $s=\dim_H(\pi_S(K))$}   \tag{U3}
\end{equation}
For the lower bound analysis, in Section~\ref{section:conditional_lower_bound}, we establish a conditional proof of the lower asymptotics based on the condition \eqref{cond_L2}:
\begin{equation}\label{cond_L2}
   \text{for $e_{n,r}(P;S)$, there is a sequence of $n$-quantizers } \alpha_n,\,\, \alpha_n\cap \pi_S(K)=\varnothing,\,\forall n\in\mathbb N^+\tag{L2}
\end{equation}
In Section~\ref{section:perturbative}, we extend from \eqref{cond_L2} using the perturbative analysis to produce a uniform lower bound for the quantization errors, which require some local density properties of the projection $\pi_S(K)$, that is, if we denote \begin{equation}\label{eq:isolated_points}
B := \left\{ x \in S : \exists r > 0, \text{ s.t. } (B(x,r) \setminus \{x\}) \cap S = \varnothing \right\}
\end{equation} as isolated points in $S$ from the set $A := S \setminus B$ of non-isolated points. We then formulate a mild density-type condition \eqref{cond_L1} that ensures, locally around each point in $\pi_S(K)\setminus \overline{B}$, that is,
\begin{align}\label{cond_L1}
      &\text{for $e_{n,r}(P;S)$, there is a sequence of $n$-quantizers } \alpha_n,\,\, \alpha_n\cap \overline{B}=\varnothing,\,\forall n\in\mathbb N^+  \nonumber; \\
     &S\cap K=\varnothing,\,\forall p\in \pi_S(K)\setminus\overline{B},\quad\forall \varepsilon>0,\quad (B(p, \varepsilon) \setminus \{p\}) \cap (A\setminus \pi_S(K)) \neq \varnothing.\tag{L1}
\end{align}

Then we have the main results as stated below.

\begin{theorem}[Upper Asymptotics]\label{thm:upper_asymptotics}
     Suppose either the condition \eqref{cond_A} or the condition \eqref{cond_B} hold. Then,
    \begin{itemize}
        \item let $T$ be a measurable selector constructed in Lemma~\ref{lemma:pi_s_k_single_valued}, for $r\geq 1$, \begin{align*}
            &\ldimqhat{r}{P;S}\leq\ldimb{*}{T_*P}\leq\ldimb{}{\pi_S(K)},\,\,\udimqhat{r}{P;S}\leq\udimb{*}{T_*P}\leq\udimb{}{\pi_S(K)};\\
             &\ldimqtilde{r}{P;S}\leq\ldimb{*}{T_*P}\leq\ldimb{}{\pi_S(K)},\,\,\udimqtilde{r}{P;S}\leq\udimb{*}{T_*P}\leq\udimb{}{\pi_S(K)};
        \end{align*}
        \item for $r>0$, if $\pi_S(K)$ is Ahlfors regular of dimension $d$, then, 
        \begin{align*}
            \dim_H(\pi_S(K))&\geq \udimqtilde{r}{P;S}\geq \ldimqtilde{r}{P;S},\\\dim_H(\pi_S(K))&\geq \udimqtilde{\infty}{P;S}\geq \ldimqtilde{\infty}{P;S},\\ \dim_H(\pi_S(K))&\geq\udimqhat{1}{P;S}\geq\ldimqhat{1}{P;S}.
        \end{align*}
    \end{itemize}
\end{theorem}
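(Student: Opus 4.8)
\emph{Proof plan.}\quad The plan is to reduce all four families of dimensions to the single covering-radius quantity $e_{n,\infty}(P;S)$ and to the classical (unconstrained) quantization of the push-forward $T_*P$, and then to let the monotonicity of $r\mapsto e_{n,r}(P;S)$ propagate the estimates. The engine is the projection pull-back inequality of Lemma~\ref{lemma:condtn_lem_asymp_e_n_infty}: under \eqref{cond_A} --- equivalently, via Lemma~\ref{lemma:approximation}, under \eqref{cond_B} --- the optimal $n$-quantizers $\alpha_n$ of $e_{n,\infty}(P;S)$ satisfy $d_H(\alpha_n,\pi_S(K))\to 0$, and using the metric projection $\pi_S$ together with the measurable selector $T$ of Lemma~\ref{lemma:pi_s_k_single_valued} one bounds $e_{n,r}(P;S)$ by a fixed multiple of the classical $n$-th $L^r$-quantization error of $T_*P$ (for $r=\infty$, by the classical $n$-point covering radius of $\supp{T_*P}$), up to a term negligible on the logarithmic scale. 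Since $\supp{T_*P}\subseteq\overline{\pi_S(K)}$ and box dimension is monotone under inclusion and invariant under closure, the classical chain $\ldimqhat{r}{T_*P}\le\ldimb{*}{T_*P}\le\ldimb{}{\pi_S(K)}$ together with its $\limsup$, tilde, and upper-box analogues then yields the first bullet for each finite $r\ge1$; the restriction $r\ge1$ is precisely what lets the pull-back invoke Minkowski's inequality to land the mass on $\supp{T_*P}$.

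For the remaining orders and for the second bullet, I would use that $e_{n,r}(P;S)$ is nondecreasing in $r$, i.e.\ $e_{n,r}(P;S)\le e_{n,r'}(P;S)\le e_{n,\infty}(P;S)$ for $0<r\le r'<\infty$ --- one of the fundamental properties in Section~\ref{section:preliminary_constrained_quantization}, reflecting that $L^r$-averages against the probability $P$ are nondecreasing in $r$. Since $t\mapsto-\log t$ is positive and decreasing on $(0,1)$, this gives $\frac{\log n}{-\log e_{n,r}(P;S)}\le\frac{\log n}{-\log e_{n,\infty}(P;S)}$ for all large $n$, so the $e_{n,\infty}$ bounds pass to every order $r>0$ after taking $\liminf$ and $\limsup$; and because $\widehat{\dim}_Q$ and $\widetilde{\dim}_Q$ are each the same monotone functional of the error sequence, the same inequalities hold verbatim for both normalizations. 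If $\pi_S(K)$ is Ahlfors regular of dimension $d$, then $\dim_H(\pi_S(K))=\ldimb{}{\pi_S(K)}=\udimb{}{\pi_S(K)}=d$, so these bounds read $\udimqtilde{r}{P;S}\le d=\dim_H(\pi_S(K))$ for every $r>0$, $\udimqtilde{\infty}{P;S}\le d$ (which is literally the $e_{n,\infty}$ estimate), and $\udimqhat{1}{P;S}\le d$; in each line the accompanying inequality between the upper and the lower version is the definitional $\liminf\le\limsup$.

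The main obstacle --- essentially the only nontrivial step --- is the pull-back transfer, that is, justifying that confining the quantizers to $S$ costs at most a bounded multiplicative factor against the classical quantization (resp.\ covering) problem of $\pi_S(K)$. Two things must be controlled: first, that a near-optimal classical configuration for $T_*P$ (or a near-optimal cover of $\pi_S(K)$) can be pushed into $S$ --- indeed into $\pi_S(K)\subseteq S$ --- at the price of a universal constant, so that it becomes an admissible constrained quantizer; and second, that the leftover error created by this projection, together with the error of approximating $P$ by $T$, is controlled by $d_H(\alpha_n,\pi_S(K))$ and hence vanishes, which is the role of \eqref{cond_A}/\eqref{cond_B} and why Example~\ref{example:cond_a_fails} records the failure mode. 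Making this rigorous rests on the upper semicontinuity of the multivalued map $\pi_S$ and the measurability of the selector $T$ from Lemma~\ref{lemma:pi_s_k_single_valued} and Section~\ref{section:preliminary_topology}, which is what legitimizes the integral inequalities and the inclusion $\supp{T_*P}\subseteq\overline{\pi_S(K)}$; this geometric book-keeping, rather than any single estimate, is the real cost of the argument.
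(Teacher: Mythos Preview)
Your overall architecture---pull back via the projection to transfer to the unconstrained quantization of $T_*P$, then propagate in $r$ by monotonicity---is the paper's strategy, but the proposal has a real gap: you work throughout with the raw errors $e_{n,r}(P;S)$ where the argument must run on the \emph{difference} quantities $\hat e_{n,r}(P;S)=(e_{n,r}^r-e_{\infty,r}^r)^{1/r}$ and $\tilde e_{n,r}(P;S)=e_{n,r}-e_{\infty,r}$. In the constrained setting $e_{n,r}(P;S)\to e_{\infty,r}(P;S)>0$, so the displayed ratio $\frac{\log n}{-\log e_{n,r}(P;S)}$ is not the object defining the dimensions in \eqref{eqn:udimhat}--\eqref{eqn:udim_ldim_tilde}; and a bound of the form ``$e_{n,r}(P;S)\le C\,e_{n,r}(T_*P)$ up to a logarithmically negligible term'' is simply false because the left side is bounded away from zero while the right side tends to zero.

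Two concrete repairs are needed. First, the monotonicity you invoke---$e_{n,r}\le e_{n,s}$ for $r\le s$---does \emph{not} immediately yield $\tilde e_{n,r}\le\tilde e_{n,s}$ or $\hat e_{n,r}\le\hat e_{n,s}$, because $e_{\infty,r}\le e_{\infty,s}$ as well and the subtraction can go either way. This monotonicity of the errors is precisely Lemma~\ref{lemma:monotone_errors_of_e_n_infty_e_n_r_constrained}, whose proof is not a one-liner: it uses the $d_H$-approximation of the quantizers to $\pi_S(K)$ from Lemma~\ref{lemma:approximation}, which is exactly where \eqref{cond_A}/\eqref{cond_B} enter. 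Second, the pull-back must be done at the level of $\hat e_{n,r}$. The paper's route is to first identify $e_{\infty,r}^r(P;S)=\int\rho(x,\pi_S(x))^r\,dP$ (Lemma~\ref{lemma:e_infty_r_by_projection_map}, again via \eqref{cond_A}/\eqref{cond_B}), so that for any admissible $\alpha\subset S$,
\[
\hat e_{n,r}^r(P;S)\le\int\bigl(\rho(x,\alpha)^r-\rho(x,\pi_S(x))^r\bigr)\,dP(x),
\]
and then to choose $\alpha$ to be an optimal quantizer for $T_*P$ on $\pi_S(K)$ and bound the integrand by $\rho(T(x),\alpha)^r$ via the triangle inequality, giving $\hat e_{n,r}(P;S)\le e_{n,r}(T_*P;\pi_S(K))$ directly (Proposition~\ref{prop:estimate_upper_lower_constrained_quantization_dim}). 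For the second bullet the analogous $L^\infty$ step is Lemma~\ref{lemma:asymp_e_n_infty_regular_pi_k}, which shows $\tilde e_{n,\infty}(P;S)\le e_{n,\infty}(\pi_S(K);\pi_S(K))$, after which Ahlfors regularity (Lemma~\ref{lemma:upperbound_limsup_n_e_n_infty_D_prime}) and the error-monotonicity feed the bound down to all $r>0$. In short: the plan is right, but every occurrence of $e_{n,r}(P;S)$ in your sketch should be $\hat e_{n,r}(P;S)$ or $\tilde e_{n,r}(P;S)$, and the two nontrivial ingredients that make this substitution work are Lemma~\ref{lemma:e_infty_r_by_projection_map} and Lemma~\ref{lemma:monotone_errors_of_e_n_infty_e_n_r_constrained}.
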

\begin{theorem}[Lower Asymptotics]\label{thm:lower_asymptotics}
 Suppose the condition \eqref{cond_L1} is true. Let $r\geq 1$. In addition, suppose the pushforward measure as constructed in Lemma~\ref{lemma:pi_s_k_single_valued} is upper Ahlfors regular of dimension $d$. Then,
    \begin{itemize}
        \item $\ldimqhat{r}{P;S}\geq \dimh{T_*P}$ and $\ldimqtilde{r}{P;S}\ge \dimh{T_*P}$;
        \item if the condition \eqref{cond_D} is true, then $\dim_H(T(K))=\dimh{T_*P}$, $\dim_H(T(K))\le \ldimqhat{r}{P;S}$ and $\dim_H(T(K))\le \ldimqhat{r}{P;S}$
    \end{itemize}
\end{theorem}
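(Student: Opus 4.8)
The plan is to transfer the constrained quantization problem on $S$ to an unconstrained quantization problem for the pushforward measure $T_*P$ on the closed set $T(K) \subseteq \pi_S(K)$, and then to apply the classical lower estimates for unconstrained quantization dimension governed by Hausdorff dimension (in the style of Graf--Luschgy and the Ahlfors-regular Zador-type arguments referenced in the introduction). First I would invoke condition \eqref{cond_L1} together with the perturbative analysis of Section~\ref{section:perturbative}: given any sequence of $n$-quantizers for $e_{n,r}(P;S)$, I may assume, after a slight perturbation that does not increase the error (using the weighted distance function described in the abstract and the nowhere-density of $\pi_S(K)$ guaranteed by $S\cap K=\varnothing$), that each quantizer element avoids $\overline{B}$ and in fact can be pushed into $A\setminus\pi_S(K)$; the point is that the \emph{value} of the error $e_{n,r}(P;S)$ is unchanged in the limit, so lower bounds proved for perturbed quantizers transfer back. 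This reduces matters to bounding $e_{n,r}(T_*P; \cdot)$ from below, since $\pi_S$ composed with $T$ acts as the identity on $T(K)$ and distances to quantizer sets are controlled by distances after pushing forward through $T$.

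Next I would handle the first bullet, $\ldimqhat{r}{P;S}\geq \dimh{T_*P}$ and the analogous $\widetilde{\ }$ statement. Here I would use upper Ahlfors regularity of $T_*P$ of dimension $d$: for a quantizer set $\alpha$ of cardinality $n$, the union $\bigcup_{a\in\alpha} B(a,\varepsilon)$ has $T_*P$-measure at most $n\cdot C'\varepsilon^d$, so if $n\varepsilon^d$ is small then a definite fraction of the mass lies at distance $\geq \varepsilon$ from $\alpha$; integrating the $r$-th power of the distance function (or taking the covering-radius/$L^\infty$ analogue for the $\widehat{\ }$ and $\infty$ cases) gives $e_{n,r}(P;S)\gtrsim \varepsilon \gtrsim n^{-1/d}$ up to constants, hence the liminf of $-\log n / \log e_{n,r}$ is at least $d \geq \dimh{T_*P}$, using that upper Ahlfors regularity forces $\dimh{T_*P}\leq d$. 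The clean way is to cite Lemma~\ref{lemma:upperbound_limsup_n_e_n_infty_D_prime} and the covering-radius properties established in Section~\ref{section:preliminary_constrained_quantization} to package the $\varepsilon$-vs-$n$ comparison, then convert to the dimension inequality by taking logarithms.

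For the second bullet I would add condition \eqref{cond_D}. The identity $\dim_H(T(K))=\dimh{T_*P}$ follows because \eqref{cond_D} is precisely a lower Ahlfors-type mass bound $T_*P(B(x,\varepsilon))\geq C\varepsilon^s$ with $s=\dim_H(\pi_S(K))$ transported through $T$ (combined with the standing upper Ahlfors regularity of dimension $d$, which I expect forces $s=d$ on $T(K)$); lower Ahlfors regularity gives $\dim_H \leq s$ and upper Ahlfors regularity gives $\dim_H \geq d = s$ by the mass distribution principle, pinning the Hausdorff dimension of the support exactly. Then $\dim_H(T(K)) \leq \ldimqhat{r}{P;S}$ is just the first-bullet inequality together with this identity, so the content is really the dimension-counting bookkeeping rather than a new estimate.

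The main obstacle I expect is the perturbative reduction at the start: ensuring that pushing quantizer elements off $\pi_S(K)$ into $A\setminus\pi_S(K)$ (or off $\overline{B}$) genuinely does not inflate $e_{n,r}(P;S)$ requires careful use of the metric projection structure --- one must verify that for every source point, its weighted distance to the perturbed quantizer is no larger, which is where the nowhere-density of $\pi_S(K)$, the hypothesis $S\cap K=\varnothing$, and the local approximability condition in \eqref{cond_L1} all enter simultaneously. A secondary delicate point is matching the three quantization-dimension variants ($\widehat{\ }$ for $r$ and $r=\infty$, and $\widetilde{\ }$) to the correct covering-radius functional from \eqref{eqn:defn_generalized_e_n_r}, since the lower bound argument for the $L^\infty$/covering-radius version and for the $L^r$ version are formally parallel but need separately stated estimates; I would isolate this in appeal to the results of Section~\ref{section:preliminary_constrained_quantization}.
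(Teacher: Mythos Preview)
Your proposal has a genuine gap in the core lower-bound estimate for the first bullet. You write that after the perturbative reduction ``this reduces matters to bounding $e_{n,r}(T_*P;\cdot)$ from below,'' and then run a ball-covering argument directly on $T_*P$. But the quantity you must bound below is $\hat{e}_{n,r}(P;S)=(e_{n,r}^r(P;S)-e_{\infty,r}^r(P;S))^{1/r}$, i.e.\ a \emph{difference}, and for $r=1$ this is $\int (\rho(x,\alpha)-\rho(x,\pi_S(x)))\,dP(x)$. The triangle inequality only yields $\rho(x,a)-\rho(x,\pi_S(x))\le \rho(a,\pi_S(x))=\rho(a,T(x))$, which is the wrong direction; there is no automatic pointwise inequality of the form $\rho(x,a)-\rho(x,\pi_S(x))\gtrsim \rho(a,T(x))$ that would let you pass to an unconstrained problem for $T_*P$. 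This is exactly the obstruction the paper's weight function $w_\lambda(x,y)=\dfrac{\rho(x,y)-\rho(x,\pi_S(x))}{\lambda+\rho(y,\pi_S(x))}$ is designed to overcome: it rewrites the difference as $w_\lambda(x,a)(\lambda+\rho(a,\pi_S(x)))$, and the substantive work (Lemma~\ref{lemma:delta_naught} and the sets $F_\delta$ in \eqref{eq:set_F_delta}) is to show that $w_\lambda$ is \emph{uniformly bounded below} by some $\delta_0>0$ on a set of positive $P$-measure when the quantizer lies in $S\setminus\pi_S(K)$. Only then can one invoke the Graf--Luschgy integral estimate (Lemma~\ref{lemma:auxillary_conditional_lower_bound}, giving $\int_B\rho(x,a)\,d\mu\ge c'\mu(B)^{1+1/d}$) and H\"older to produce the bound $n^{1/d}\hat{e}_{n,1}(P;S)\ge C\,P(F_{\delta_0})^{1+1/d}>0$. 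You mention the weight function only for the perturbation step, not for this conversion, so the central mechanism is missing.

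Two smaller points: your citation of Lemma~\ref{lemma:upperbound_limsup_n_e_n_infty_D_prime} is misplaced, since that lemma uses \emph{lower} Ahlfors regularity to give an \emph{upper} bound on $e_{n,\infty}$, the opposite of what you need here; and the paper's perturbation in Proposition~\ref{prop:lower_asymptotics_divided_s_k} does not preserve the error exactly but incurs an additive $\varepsilon/n^{1/d}$, which is harmless for the dimension conclusion but is not the ``does not increase the error'' statement you describe. Your treatment of the second bullet (matching $\dim_H(T(K))$ with $\dim_H^*(T_*P)$ via two-sided Ahlfors bounds and the mass distribution principle) is in line with the paper's Proposition in Section~5.3.
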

The following corollary is a direct result on the \text{constrained} quantization dimensions provided enough regularity.
\begin{corollary}
   Suppose conditions \eqref{cond_D} and \eqref{cond_L1} are true. Suppose $T$ constructed in Lemma~\ref{lemma:pi_s_k_single_valued} is surjective and $T_*P$ is Ahlfors regular of dimension $d$. Let $r\geq 1$. Then we have the sharp dimension bounds, $$\dim_H(\pi_S(K))=\ldimqtilde{r}{P;S}=\ldimqtilde{\infty}{P;S}=\ldimqhat{1}{P;S}=\udimqtilde{r}{P;S}=\udimqtilde{\infty}{P;S}=\udimqhat{1}{P;S}.$$
\end{corollary}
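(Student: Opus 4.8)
The plan is to derive the corollary directly from Theorems~\ref{thm:upper_asymptotics} and~\ref{thm:lower_asymptotics} by assembling the chain of inequalities they provide and then closing it into equalities using the extra hypotheses ($T$ surjective and $T_*P$ genuinely Ahlfors regular, i.e.\ both upper and lower Ahlfors regular of the same dimension $d$). First I would observe that under these hypotheses all the relevant dimensional quantities collapse: $T$ surjective gives $T(K)=\pi_S(K)$, and Ahlfors regularity of $T_*P$ forces $\dim_H(\pi_S(K))=\ldimb{}{\pi_S(K)}=\udimb{}{\pi_S(K)}=d$ as well as $\dimh{T_*P}=\dimp{T_*P}=d$ and $\ldimb{*}{T_*P}=\udimb{*}{T_*P}=d$; I would record this as a preliminary normalization so every bound in the two theorems has the \emph{same} value $d=\dim_H(\pi_S(K))$ on its ``outer'' end.

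Next I would run the upper estimates. Since $T_*P$ is Ahlfors regular, in particular $\pi_S(K)$ is Ahlfors regular of dimension $d$, so the second bullet of Theorem~\ref{thm:upper_asymptotics} applies and gives $\dim_H(\pi_S(K))\geq \udimqtilde{r}{P;S}\geq \ldimqtilde{r}{P;S}$, and likewise for the $r=\infty$ and the $\widehat{\cdot}^{\,1}$ variants. Simultaneously, condition \eqref{cond_D} implies \eqref{cond_A} (it is explicitly a strengthening of it), so the hypotheses of Theorem~\ref{thm:upper_asymptotics} are met and the first bullet also holds; combined with the normalization this already pins every listed upper dimension to lie in $[\,\ldimqtilde{r}{P;S},\,d\,]$ type intervals with common right endpoint $d$.

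Then I would run the lower estimates. Conditions \eqref{cond_D} and \eqref{cond_L1} hold by hypothesis, and $T_*P$ being Ahlfors regular is in particular upper Ahlfors regular of dimension $d$, so Theorem~\ref{thm:lower_asymptotics} applies: the first bullet yields $\ldimqhat{r}{P;S}\geq \dimh{T_*P}=d$ and $\ldimqtilde{r}{P;S}\ge \dimh{T_*P}=d$, and the second bullet (valid since \eqref{cond_D} holds) yields $\dim_H(T(K))=\dimh{T_*P}$ together with $\dim_H(T(K))\le \ldimqhat{r}{P;S}$; with $T$ surjective this says $\dim_H(\pi_S(K))\le \ldimqhat{1}{P;S}$ and, via $\ldimqhat{1}{P;S}\le \udimqhat{1}{P;S}\le d$ from the upper theorem, forces $\ldimqhat{1}{P;S}=\udimqhat{1}{P;S}=d$. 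The remaining items ($\ldimqtilde{r}{P;S}$, $\udimqtilde{r}{P;S}$, and the $r=\infty$ versions) are squeezed between the lower bound $\geq d$ just obtained and the upper bound $\leq d$ from Theorem~\ref{thm:upper_asymptotics}, hence all equal $d=\dim_H(\pi_S(K))$; concatenating gives the displayed string of equalities.

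The main obstacle, such as it is, is bookkeeping rather than mathematics: one must make sure that every quantity in the final display is actually controlled \emph{on both sides} by the two theorems after the surjectivity/regularity reductions --- in particular that ``Ahlfors regular'' is being used to supply \emph{both} the upper-regularity hypothesis of Theorem~\ref{thm:lower_asymptotics} and the lower-regularity needed to identify $\dim_H(T(K))$ with $d$, and that surjectivity of $T$ is what upgrades the $T(K)$-statements of Theorem~\ref{thm:lower_asymptotics} into $\pi_S(K)$-statements matching the upper bounds. Once that alignment is checked, no further estimate is needed and the corollary follows by transitivity of $\le$.
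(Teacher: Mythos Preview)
Your proposal is correct and matches the paper's intent: the corollary is stated in the paper without proof, as an immediate consequence of Theorems~\ref{thm:upper_asymptotics} and~\ref{thm:lower_asymptotics}, and your argument is precisely that combination---use \eqref{cond_D}$\Rightarrow$\eqref{cond_A} plus Ahlfors regularity of $T_*P$ (hence of $\pi_S(K)$) to feed Theorem~\ref{thm:upper_asymptotics}, use \eqref{cond_L1} and upper Ahlfors regularity to feed Theorem~\ref{thm:lower_asymptotics}, and squeeze. The only point you leave implicit is that the $r=\infty$ lower bounds come via the monotonicity $\tilde e_{n,r}\le \tilde e_{n,\infty}$ (Lemma~\ref{lemma:monotone_errors_of_e_n_infty_e_n_r_constrained}) rather than directly from Theorem~\ref{thm:lower_asymptotics}, but this is routine.
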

\section{Preliminary Results}\label{section:preliminary}
\subsection{Definitions, Notations and Related Work}\label{section:intro_notations_definitions}
\textit{Quantization} is to discretize a probability by finitely supported discrete probability, i.e., the minimization problem \cite{graf}: \[\inf\{\min\limits_{a\in\alpha}\rho(x,a)^rdP(x):\alpha\subseteq \bbrD,\empty |\alpha|\leq n\}\]
  where $\rho(\cdot,\cdot)$ denotes the Euclidean metric. $\alpha$ is the discrete representative of $P$. $|\alpha|$ is the cardinality (number of elements) of $\alpha$. We assume $\mathbb E\{|X|^r\}=\int \rho(x,0)^r dP(x)<\infty$. An equivalent definition is :
  \[\inf\{\int \rho(x,f(x))^rdP:f:\bbrD\rightarrow\bbrD \text{ measurable, } |f(X)|\leq n\}\]
  which means $f$ maps $X$ only to at most $n$ elements. $f$ projects each $x\in \supp{P}$ to the nearest point from finite candidates, under the meaning of $L^r$ distance. In unconstrained settings, when $1\leq r<\infty$, or $r=\infty$, we define the \textit{n-th covering radius} for a compactly supported probability $P$ as 
    \[e_{n,r}(P):=\inf\limits_{\substack{\alpha\subseteq \mathbb R^D \\ |\alpha|\leq n}}\bigg(\int \min\limits_{a\in \alpha}\rho(x,a)^r dP(x)\bigg)^{\frac{1}{r}},\,\,\,e_{n,\infty}(P):=\inf\limits_{\substack{\alpha\subseteq\bbrD \\ |\alpha|\leq n}}\sup\limits_{x\in\supp{P}}\inf\limits_{a\in\alpha}\rho(x,a)\]
    For a compact set $K\Subset \bbrD$ we can define \[e_{n,\infty}(K):=\inf\limits_{\substack{\alpha\subseteq \bbrD \\ |\alpha|\leq n}}\sup\limits_{x\in K}\inf\limits_{a\in\alpha}\rho(x,a)\]
    A finite set $\alpha$ with cardinality less or equal to $n$, for which the infimum above is attained, is called the \textit{n-optimal set of centers of $K$ of order $\infty$}. In addition $$e_{n,\infty}(K)=\max\limits_{x\in K}\rho(\alpha,K)$$
    We denote $C_{n,r}(P)$ and $C_{n,\infty}(K)$ the collections of all n-optimal sets. Consequently the \textit{constrained covering radius} for the cases of $1\leq r<\infty$ and $r=\infty$ can be defined as 
    \[e_{n,r}(P;S):=\inf\limits_{\substack{\alpha\subseteq S \\ |\alpha|\leq n}}\bigg(\int \min\limits_{a\in \alpha}\rho(x,a)^r dP(x)\bigg)^{\frac{1}{r}}\quad\quad e_{n,\infty}(K;S):=\inf\limits_{\substack{\alpha\subseteq S \\ |\alpha|\leq n}}\sup\limits_{x\in K}\inf\limits_{a\in\alpha}\rho(x,a)\]
    where $e_{n,\infty}(K;S)=\max\limits_{x\in K}\rho(\alpha,K)$ for some $\alpha\in S$ if $S$ is compact. Denote $C_{n,r}(P;S)$ and $C_{n,\infty}(K;S)$ the collections of all n-optimal sets for the constrained case, i.e.
    $$C_{n,r}(P;S):=\{\alpha\subseteq S:\int \rho(x,\alpha)^rdP(x)=e_{n,r}^r(P;S)\}$$
    In addition and in a similar way, we can define \textit{the error of $r-$th covering radius} as 
    \begin{equation}\label{eqn:defn_generalized_e_n_r}
        \tilde{e}_{n,r}(P;S)=e_{n,r}(P;S)-e_{\infty,r}(P;S)\quad\text{ and similarly }\quad \hat{e}_{n,r}(P;S)=\biggp{e_{n,r}^r(P;S)-e_{\infty,r}^r(P;S)}^{\frac{1}{r}}
    \end{equation}
    where $\lim\limits_{n\rightarrow \infty}e_{n,r}(P;S)=e_{\infty,r}(P;S)$ is guaranteed by the monotonicity, as stated in the following sections. By the definitions above we observe that $\hat{e}_{n,r}(P;S)\geq \tilde{e}_{n,r}(P;S)$ for $r\geq 1$ and the equality is achieved when $r=1$, as a result of the convexity of $f(x)=x^r$. 

Following the conventions in \cite{dai2008quantization}, the upper and lower quantization dimensions, $\underline{s}$ and $\overline{s}$, for unconstrained cases are defined as:
\begin{equation}\label{ldim_unconstrained}
\liminf\limits_{n\rightarrow\infty}n^{\frac{1}{s}}{e}_{n,r}(P)=\infty\text{ for }s< \underline{s}\text{ and }\liminf\limits_{n\rightarrow\infty}n^{\frac{1}{s}}{e}_{n,r}(P)=0\text{ for }s>\underline{s}
\end{equation}
\begin{equation}\label{udim_unconstrained}
    \limsup\limits_{n\rightarrow\infty}n^{\frac{1}{s}}{e}_{n,r}(P)=\infty\text{ for }s< \overline{s}\text{ and }\limsup\limits_{n\rightarrow\infty}n^{\frac{1}{s}}{e}_{n,r}(P)=0\text{ for }s>\overline{s}
\end{equation}
where $P$ is a Borel probability . The upper and lower \textit{unconstrained} quantization dimensions are denoted as $\udimq{}{P}$ and $\ldimq{}{P}$ respectively. Analogous to the previous studies, we can define the \textit{upper and lower quantization dimensions} $\ldimqhat{r}{P;S}$ and $\udimqhat{r}{P; S}$ as unique real numbers $\underline{s}$ and $\overline{s}$ that satisfy 
\begin{equation}\label{ldim}
\liminf\limits_{n\rightarrow\infty}n^{\frac{1}{s}}\hat{e}_{n,r}(P;S)=\infty\text{ for }s\leq \underline{s}\text{ and }\liminf\limits_{n\rightarrow\infty}n^{\frac{1}{s}}\hat{e}_{n,r}(P;S)=0\text{ for }s>\underline{s}
\end{equation}
\begin{equation}\label{udim}
    \limsup\limits_{n\rightarrow\infty}n^{\frac{1}{s}}\hat{e}_{n,r}(\mu;S)=\infty\text{ for }s< \overline{s}\text{ and }\limsup\limits_{n\rightarrow\infty}n^{\frac{1}{s}}\hat{e}_{n,r}(\mu;S)=0\text{ for }s\leq\overline{s}
\end{equation}
where and $S$ is a closed constraint. Alternatively, as justified in \cite[Section 11.1]{graf}: 
\begin{equation}\label{eqn:udimhat}
    \udimqhat{r}{P;S}=\limsup\limits_{n\rightarrow\infty}\frac{r\log n}{-\log \biggp{e_{n,r}^r(P;S)-e_{\infty,r}^r(P;S)}}=\limsup\limits_{n\rightarrow\infty}\frac{\log n}{-\log \hat{e}_{n,r}(P;S)}
\end{equation}
\begin{equation}\label{eqn:ldimhat}
    \ldimqhat{r}{P;S}=\liminf\limits_{n\rightarrow \infty}\frac{r\log n}{-\log \biggp{e_{n,r}^r(P;S)-e_{\infty,r}^r(P;S)}}=\liminf\limits_{n\rightarrow\infty}\frac{\log n}{-\log \hat{e}_{n,r}(P;S)}
\end{equation}
In a similar way, we define the upper and lower quantization dimensions with respect to $\tilde{e}_{n,r}(P;S)$:
\begin{equation}\label{eqn:udim_ldim_tilde}
    \udimqtilde{r}{P;S}=\limsup\limits_{n\rightarrow\infty}\frac{\log n}{-\log \tilde{e}_{n,r}(P;S)}\quad\text{ and }\quad\ldimqtilde{r}{P;S}=\liminf\limits_{n\rightarrow\infty}\frac{\log n}{-\log \tilde{e}_{n,r}(P;S)}
\end{equation}
In the following proposition we summarize the related bounds of quantization dimensions with respect to other important dimensions.
\begin{proposition}\label{prop:dimensions_in_articles}
    \begin{itemize}
        \item In \cite{graf}, if $P$ is a compactly supported probability on $K$ and $1\leq r\leq s\leq\infty$, then \(\ldimb{}{K}=\ldimq{\infty}{P}=\ldimq{\infty}{K}\geq \ldimq{s}{P}\geq\ldimq{r}{P}\) and \(\udimb{}{K}=\udimq{\infty}{P}=\udimq{\infty}{K}\geq \udimq{s}{P}\geq\udimq{r}{P}\);
        \item In \cite{graf}, let $P$ be a probability, for $r\geq 1$, $\dimh{P}\leq \ldimq{r}{P}$;
        \item In \cite{dai2008quantization}, if $P$ is a probability on $\bigtimes\limits_{i=1}^D [0,1]$ and $0<r<\infty$, then $\dimp{P}\leq \udimq{^r}{P}$ and $\dimh{P}\leq \ldimq{r}{P}$;
        \item In \cite{kloeckner2012}, if $P$ Ahlfors regular of dimension $d$ and compactly supported on a stable Riemannian manifold, let $\infty>r\geq 1$, then $0<\liminf\limits_{n\rightarrow\infty}ne_{n,r}^{d}(P)\leq \limsup\limits_{n\rightarrow\infty}ne_{n,r}^{d}(P)<\infty$, that is, $\ldimq{r}{P}=\udimq{r}{P}=d$
    \end{itemize}
\end{proposition}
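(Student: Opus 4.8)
The plan is to treat Proposition~\ref{prop:dimensions_in_articles} as a compilation of known statements and to verify each bullet by pointing to the precise place in the literature, adding only the minimal bridging remark needed to match the notation of this article. First I would address the two Graf--Luschgy items together. For the $r=\infty$ identities, I would cite \cite[Prop.~11.4 and Thm.~11.5]{graf}: the monotonicity $e_{n,r}(P)\le e_{n,s}(P)\le e_{n,\infty}(P)=e_{n,\infty}(K)$ for $1\le r\le s$ follows from H\"older's inequality and the fact that an $L^\infty$-optimal set is admissible for every $L^r$ problem, and this monotonicity passes to the dimensions through the logarithmic formulas \eqref{eqn:udimhat}--\eqref{eqn:ldimhat} (in their unconstrained analogues); the equality with $\underline{\dim}_B K$ and $\overline{\dim}_B K$ is exactly the observation that an $n$-point set realizing $e_{n,\infty}(K)$ is, up to a constant factor, an optimal $\varepsilon$-cover of $K$, so that $e_{n,\infty}(K)\asymp$ the covering radius, whose exponential rate is the box dimension. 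For $\dim_H^\ast(P)\le \underline{\dim}_Q^r(P)$ with $r\ge 1$, I would cite \cite[Thm.~11.6 / Cor.~11.9]{graf}, whose proof uses a mass-distribution/energy argument: if $\dim_H^\ast(P)>t$ one finds a subset of positive measure on which the $t$-energy is finite, forcing $n\,e_{n,r}^{t}(P)$ to stay bounded below.

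Next I would handle the Dai--Tang item. Here I would cite the relevant theorems of \cite{dai2008quantization} directly; the inequality $\dim_P^\ast(P)\le \overline{\dim}_Q^{r}(P)$ is their packing-dimension lower bound for the upper quantization dimension, proved via a Besicovitch-type covering together with the elementary estimate that any $n$-point configuration leaves a set of diameter $\gtrsim n^{-1/s}$ essentially uncovered on a packing-regular piece of the support, and $\dim_H^\ast(P)\le \underline{\dim}_Q^{r}(P)$ is the Hausdorff counterpart (and in fact already contained in the previous Graf--Luschgy bullet, so I would simply note the overlap). The restriction to the unit cube $\bigtimes_{i=1}^D[0,1]$ is only a normalization and I would remark that it entails no loss of generality by scaling.

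Finally, for the Kloeckner item I would cite \cite[Thm.~1.1 or its Ahlfors-regular corollary]{kloeckner2012}. The mechanism to recall is a Zador-type two-sided estimate: upper Ahlfors $d$-regularity lets one cover $\mathrm{supp}(P)$ by $\asymp \varepsilon^{-d}$ balls of radius $\varepsilon$ and place one quantizer in each, giving $e_{n,r}(P)\lesssim n^{-1/d}$; lower Ahlfors $d$-regularity forces any $n$ points to miss mass $\gtrsim n^{-1}$ at scale $\gtrsim n^{-1/d}$, giving the matching lower bound. The "stable Riemannian manifold" hypothesis is what Kloeckner needs so that geodesic balls behave like Euclidean balls uniformly; I would simply transcribe his conclusion $0<\liminf_n n\,e_{n,r}^{d}(P)\le\limsup_n n\,e_{n,r}^{d}(P)<\infty$, which immediately yields $\underline{\dim}_Q^{r}(P)=\overline{\dim}_Q^{r}(P)=d$ from the definitions \eqref{ldim_unconstrained}--\eqref{udim_unconstrained}.

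I do not expect a genuine obstacle, since nothing here is new; the only care needed is bookkeeping, namely making sure the $\ast$-variants $\dim_H^\ast$, $\dim_P^\ast$ and the order conventions ($r$ versus $^r$ in $\overline{\dim}_Q^{r}$) in the cited sources are matched to the conventions fixed in Section~\ref{section:intro_notations_definitions}, and flagging that the $r=1$ equality $\hat e_{n,1}=\tilde e_{n,1}$ and the monotonicity $\hat e_{n,r}\ge\tilde e_{n,r}$ are exactly the convexity remark already made after \eqref{eqn:defn_generalized_e_n_r}, so that these literature bounds can later be transferred to the constrained quantities without re-proof.
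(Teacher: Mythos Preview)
Your proposal is correct and matches the paper's treatment exactly: the paper gives no proof for this proposition at all, presenting it purely as a summary of results from \cite{graf}, \cite{dai2008quantization}, and \cite{kloeckner2012}, with only the follow-up Remark~\ref{generalized_proof_dai08} noting how some of those proofs generalize. Your bridging remarks and precise citations are more than the paper itself provides, so nothing further is needed.
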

\begin{remark}\label{generalized_proof_dai08}
    In \cite{dai2008quantization}, the author established the proof of $\dimh{P}\leq \ldimq{r}{P}$ when $P$ is supported in $\bigtimes\limits_{i=1}^D [0,1]$. This proof can be further extended to scenarios where the quantizers are constrained within the support , say $\dimh{P}\leq \ldimq{r}{P;S}$, provided that $P$ is compactly supported. The proof can also be generalized to the cases where the quantizers are constrained in the support $K=\supp{P}$, for $P$ compactly supported. In addition \cite[Section 11.3]{graf}, the author proposed the proof for $\underline{\dim}_B(K)=\ldimq{\infty}{P}$ where $\supp{P}=K\Subset\bbrD$ under the assumption of unconstrained quantization. This result can also be generalized to the case of constrained quantization when the constraint set itself is $K$, as the proof does not rely on the specific geometry of the quantizers.
\end{remark}

In contrast to unconstrained quantization, the constrained setting requires a finer understanding of the geometric and topological structure of the constraint set $S$, as it directly influences the admissible quantizers. We therefore expect that the rate and convergence of the constrained quantization problems should rely on the topological properties on $S$. Let $K=\supp{P}\Subset\bbrD$ with $S\subseteq\bbrD$ a closed constraint, we define a multivalued function $\pi_S:K\rightarrow 2^S$, the projection of $K$ to $S$ as 
\begin{equation}
    \label{eqn:projection}
    \pi_S(x):=\{y\in S: \rho(x,y)=\rho(x,S)\}
    \quad\quad\pi_S(K)=\bigcup\limits_{x\in K}\pi_S(x)
\end{equation}
For a subset $E\in\pi_S(K)$ we define the reversal set of $E$ as:
\begin{equation}
    \label{eqn:reversal}
    \pi_S^{-1}(x):=\{y\in K:\pi_S(y)\cap\{x\}\neq\varnothing\}\quad\quad\pi_S^{-1}(E):=\{y\in K:\pi_S(y)\subseteq E\}
\end{equation}
Concrete properties of $\pi_S$ and a single-valued measurable selector between $S$ and $\pi_S(K)$ will be showcased in Section~\ref{section:preliminary_topology}.
\subsection{Simple Extensions of Existing Results}\label{section:basic_properties_quantization}  
  The existence of optimal quantizers under unconstrained cases are extensively studied under the Euclidean \cite[Section 4]{graf} and Banach settings \cite{luschgy2023marginal}. Under the settings of the probabilities on Banach spaces, the fact that $C_{n,r}(P)$ being nonempty and the $d_H$ compactness of $C_{n,r}(P)$ was introduced in \cite[Chapter 1 and 2]{luschgy2023marginal}. In the following lemma, we specifically provide recent works related to the existence of the constrained case.
  \begin{lemma}[Existence of $C_{n,r}(P;S)$]
    \label{lemma:existence_previously}
    Let $1\leq r<\infty$. Suppose $P$ is a Borel probability on $\bbrD$, and $S\subseteq \bbrD$ is a closed set. Then
    \begin{itemize}
        \item $C_{1,r}(P;S)$ is nonempty;
        \item If there exists at least $n$ elements in $S$ whose Voronoi regions have positive probability, then $C_{n,r}(P;S)$ is nonempty and $|\alpha|=n$ for any $\alpha\in C_{n,r}(P;S)$;
        \item $C_{n,\infty}(P;S)$ is nonempty if $\supp{P}\Subset \bbrD$ is compact.
    \end{itemize}
\end{lemma}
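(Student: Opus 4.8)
The plan is to run the direct method of the calculus of variations, adapting the classical existence argument of Graf--Luschgy \cite[Theorem~4.12]{graf} to the closed constraint $S$ and inserting a Voronoi pigeonhole for the exact‑cardinality statement; the standing assumption $\mathbb E\{|X|^r\}<\infty$ is used for integrability and for coercivity. For $n=1$ I would consider $\Phi_1(a):=\int\rho(x,a)^r\,dP(x)$ on $S$: from $\rho(x,a)^r\le 2^{r-1}\bigl(\rho(x,0)^r+\rho(a,0)^r\bigr)$ and dominated convergence $\Phi_1$ is continuous, while $\rho(x,a)\ge\rho(a,0)-\rho(x,0)$ makes it coercive, so every sublevel set $\{a\in S:\Phi_1(a)\le c\}$ is bounded and, $S$ being closed, compact. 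Minimizing the continuous $\Phi_1$ over the nonempty compact set $\{\Phi_1\le\Phi_1(a_0)\}$ for an arbitrary $a_0\in S$ then yields a point in $C_{1,r}(P;S)$.

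For $1<n<\infty$ I would first establish $C_{n,r}(P;S)\ne\varnothing$ unconditionally. Pick a minimizing sequence $\alpha_k\subseteq S$ with $|\alpha_k|\le n$ and $\int\rho(x,\alpha_k)^r\,dP\downarrow e_{n,r}^r(P;S)$. The crucial step is to confine $(\alpha_k)$ to a fixed compact subset $S'\subseteq S$: this is the standard truncation lemma of \cite[Theorem~4.12]{graf} (carried to the constrained setting in \cite{pandey2023,pandey2024constrained}), in which a center lying far away is shown to be replaceable by a fixed reference center without increasing the cost. When $P$ is compactly supported on $K$ --- the principal case of this article --- this is elementary: fixing $a_\ast\in S$ with $\rho(a_\ast,K)\le 1+\inf_{y\in S}\rho(y,K)$, any $a\in\alpha$ with $\rho(a,K)>2\sup_{x\in K}\rho(x,a_\ast)$ satisfies $\rho(x,a)>\rho(x,a_\ast)$ for all $x\in K$, so replacing $a$ by $a_\ast$ does not increase $\int\rho(x,\cdot)^r\,dP$, confining the search to $S':=S\cap\{y:\rho(y,K)\le c_0\}$, closed and bounded, hence compact. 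The closed subsets of $S'$ of cardinality $\le n$ form a Hausdorff‑compact family (cf. \cite[Chapters~1--2]{luschgy2023marginal}), so a subsequence gives $\alpha_k\to\alpha\subseteq S'$, $|\alpha|\le n$; since $\rho(x,\alpha_k)\to\rho(x,\alpha)$ pointwise and $\rho(x,\alpha_k)^r\le 2^{r-1}(\rho(x,0)^r+R^r)\in L^1(P)$ for $S'\subseteq\overline{B}(0,R)$, dominated convergence gives $\int\rho(x,\alpha)^r\,dP=e_{n,r}^r(P;S)$, so $\alpha\in C_{n,r}(P;S)$.

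For the cardinality statement, suppose $\alpha\in C_{n,r}(P;S)$ with $|\alpha|=m<n$. By hypothesis there are $n$ distinct $b_1,\dots,b_n\in S$ whose Voronoi regions relative to the site set $S$, $W_i:=\{x:\rho(x,b_i)<\rho(x,y)\text{ for every }y\in S\setminus\{b_i\}\}$ (so that $P(W_i)=P(\pi_S^{-1}(\{b_i\}))>0$), have positive $P$‑measure; since $m<n$, relabel so $b_1\notin\alpha$. Then $\alpha\subseteq S\setminus\{b_1\}$, hence $\rho(x,b_1)<\rho(x,\alpha)$ on $W_1$, and the identity $\int\rho(x,\alpha)^r\,dP-\int\rho(x,\alpha\cup\{b_1\})^r\,dP=\int_{\{\rho(\cdot,b_1)<\rho(\cdot,\alpha)\}}(\rho(x,\alpha)^r-\rho(x,b_1)^r)\,dP$ has right‑hand side at least $\int_{W_1}(\rho(x,\alpha)^r-\rho(x,b_1)^r)\,dP>0$, so $\alpha\cup\{b_1\}$ --- still of cardinality $\le n$ --- strictly undercuts $\alpha$, contradicting optimality; thus $|\alpha|=n$. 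For $r=\infty$ with $K=\supp{P}$ compact, the cost $\Phi_\infty(\alpha):=\sup_{x\in K}\rho(x,\alpha)$ is $1$‑Lipschitz for the Hausdorff distance $d_H$, and, exactly as above, any center far from $K$ may be discarded or pulled toward $K$ without increasing $\Phi_\infty$, so the infimum over $|\alpha|\le n$ equals the infimum over closed $\alpha\subseteq S'':=S\cap\{y:\rho(y,K)\le c_0\}$ (compact); Hausdorff‑compactness of this family together with continuity of $\Phi_\infty$ then furnishes a minimizer, so $C_{n,\infty}(P;S)\ne\varnothing$.

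The step I expect to be the main obstacle is the compactness/truncation reduction for $1<n<\infty$ when $P$ is not compactly supported: one must show that finitely many replacements of far‑out centers by a reference center confine an \emph{entire} minimizing sequence to a single compact subset of $S$, and it is exactly here that finiteness of the $r$‑th moment enters essentially. Once that reduction is available, Hausdorff‑compactness, the (lower semi‑)continuity of the cost functionals, and the Voronoi pigeonhole for the cardinality are all routine.
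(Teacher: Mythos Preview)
Your proposal is correct and follows the standard direct-method argument (coercivity, compactness of sublevel sets, continuity of the cost, and a Voronoi pigeonhole for the cardinality), which is precisely the content of the references \cite{pandey2023,graf} that the paper invokes; the paper's own proof of this lemma is purely citational, deferring all details to those sources and to its later self-contained Lemma~\ref{lemma:existence}, so your explicit write-up matches the intended approach in substance while being more detailed.
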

\begin{proof}
    The first argument was proven in \cite[Proposition 1.2]{pandey2023} with a setting of $0<r<\infty$. The second argument was proven in \cite[Proposition 1.3]{pandey2023} based on the first argument. There was also a proof of the second argument on the unconstrained case in \cite[Section 4.1]{graf}. The third argument under the unconstrained case was proven in \cite[Section 10.3]{graf}, which can be genuinely generalized by Lemma~\ref{lemma:monotone_e_n_infty_e_n_r_constrained_10.1}.
\end{proof}
We hereby discuss the monotonicity results and limiting property of $e_{n,r}(P;S)$ by the following lemma.
\begin{lemma}
\label{lemma:monotone_e_n_infty_e_n_r_constrained_10.1}
    Let $n\in\mathbb N$, $S\subseteq\supp{P} \subseteq\bbrD$. Suppose $1\leq r\leq s\leq\infty$, then \[e_{n,r}(P;S)\leq e_{n,s}(P;S)\]
    Furthermore, if $S\Subset \bbrD$ then \[\lim\limits_{r\rightarrow \infty}e_{n,r}(P;S)=e_{n,\infty}(P;S)\]
\end{lemma}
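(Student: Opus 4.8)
The statement to prove is Lemma~\ref{lemma:monotone_e_n_infty_e_n_r_constrained_10.1}: for $S \subseteq \supp{P} \subseteq \bbrD$ and $1 \le r \le s \le \infty$ we have $e_{n,r}(P;S) \le e_{n,s}(P;S)$, and if $S$ is compact then $\lim_{r\to\infty} e_{n,r}(P;S) = e_{n,\infty}(P;S)$. This is the constrained analogue of \cite[Lemma~10.1]{graf}, and the plan is to follow that template, checking at each step that nothing in the argument used freedom of the quantizers. For the monotonicity, I would fix any finite $\alpha \subseteq S$ with $|\alpha| \le n$ and apply the standard $L^r$–$L^s$ norm inequality (Jensen / Hölder) to the nonnegative function $x \mapsto \rho(x,\alpha) := \min_{a\in\alpha}\rho(x,a)$ with respect to the probability measure $P$: since $P$ is a probability measure, $\|\cdot\|_{L^r(P)} \le \|\cdot\|_{L^s(P)}$ for $r \le s$, and the $s = \infty$ case is the trivial bound by the essential supremum. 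Taking the infimum over all admissible $\alpha \subseteq S$ on both sides preserves the inequality and yields $e_{n,r}(P;S) \le e_{n,s}(P;S)$. The hypothesis $S \subseteq \supp{P}$ is not actually needed here; monotonicity holds for any closed $S$, but I would state it as given.

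**The limit as $r\to\infty$.** For the second part, monotonicity already gives that $r \mapsto e_{n,r}(P;S)$ is nondecreasing and bounded above by $e_{n,\infty}(P;S)$, so $L := \lim_{r\to\infty} e_{n,r}(P;S)$ exists and satisfies $L \le e_{n,\infty}(P;S)$. For the reverse inequality I would use compactness of $S$ to extract, for each $r$, an optimal (or near-optimal) finite set $\alpha_r \subseteq S$ with $|\alpha_r| \le n$ attaining $e_{n,r}(P;S)$ — existence is guaranteed by Lemma~\ref{lemma:existence_previously} together with the fact that $S$ compact makes the relevant family of configurations $d_H$-compact. Passing to a subsequence along which $r \to \infty$, the sets $\alpha_r$ (viewed as elements of the hyperspace of closed subsets of the compact set $S$ of cardinality $\le n$, which is itself compact in the Hausdorff metric) converge to some $\alpha_\infty \subseteq S$ with $|\alpha_\infty| \le n$. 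Then one shows $\sup_{x \in \supp P} \rho(x,\alpha_\infty) \le L$, which gives $e_{n,\infty}(P;S) \le L$ and closes the loop. Concretely: for fixed $t < \infty$ and all $r \ge t$, $e_{n,t}(P;S) \le e_{n,r}(P;S) \le L$, and letting $r \to \infty$ with $\alpha_r \to \alpha_\infty$ one argues $\|\rho(\cdot,\alpha_\infty)\|_{L^t(P)} \le L$ for every finite $t$; letting $t \to \infty$ upgrades this to the $L^\infty(P)$ bound, and since $\supp P$ is where $P$ concentrates, $\sup_{x\in\supp P}\rho(x,\alpha_\infty) = \|\rho(\cdot,\alpha_\infty)\|_{L^\infty(P)} \le L$.

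**Main obstacle.** The routine part is the norm monotonicity; the delicate part is the limit, specifically the interchange of the limit in $r$ with the spatial supremum. Two technical points need care: first, justifying convergence $\|\rho(\cdot,\alpha_r)\|_{L^t(P)} \to \|\rho(\cdot,\alpha_\infty)\|_{L^t(P)}$ — this follows from $\alpha_r \to \alpha_\infty$ in $d_H$ making $\rho(x,\alpha_r) \to \rho(x,\alpha_\infty)$ uniformly in $x$ (the distance-to-a-set function is $1$-Lipschitz in the set with respect to $d_H$), combined with boundedness of all these functions on the compact set $\conv{\supp P \cup S}$ and dominated convergence; second, the upgrade from "$L^t(P)$ bound for all finite $t$" to "$L^\infty(P)$ bound", which is the standard fact that $\|f\|_{L^t(P)} \to \|f\|_{L^\infty(P)}$ as $t \to \infty$ for $f$ bounded and $P$ a probability measure. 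Neither is hard, but both must be spelled out, and one should note explicitly that compactness of $S$ (not merely closedness) is what makes the extraction of $\alpha_\infty$ possible — without it the infimum over $r$ of the optimal configurations could "escape to infinity" and the limit could be strict. I would remark that this mirrors exactly the unconstrained proof in \cite[Section~10]{graf}, with $\bbrD$ replaced by $S$ throughout, since that proof never exploited convexity or any special structure of the ambient space beyond the ability to take a convergent subsequence of quantizer configurations.
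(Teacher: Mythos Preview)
Your proposal is correct and follows essentially the same route as the paper's proof: Jensen/H\"older for the monotonicity, and for the limit, $d_H$-compactness of the space of admissible configurations in $S$, extraction of a convergent subsequence of optimal quantizers $\alpha_r \to \alpha_\infty$, and the standard upgrade $\|\,\cdot\,\|_{L^r(P)} \to \|\,\cdot\,\|_{L^\infty(P)}$. The only cosmetic difference is that the paper handles the limit in one pass along the extracted subsequence via a reverse-triangle-inequality estimate in $L^{r_k}$, whereas you decouple it into two steps (fix $t$, let $r\to\infty$ using $d_H$-continuity, then let $t\to\infty$); both reach the same conclusion.
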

\begin{proof}
    For the first argument, using the Jensen's inequality:
    \begin{align*}
        e_{n,r}(P;S)&=\inf\limits_{\substack{\alpha\subseteq S\\ |\alpha|\leq n}}\bigg(\int \rho(\alpha,x)^rdP(x)\bigg)^{\frac{1}{r}}=\inf\limits_{\substack{\alpha\subseteq S\\ |\alpha|\leq n}}\bigg(\int \rho(\alpha,x)^{s\cdot\frac{r}{s}}dP(x)\bigg)^{\frac{1}{s}\cdot\frac{s}{r}}\\ 
        &\leq \inf\limits_{\substack{\alpha\subseteq S\\ |\alpha|\leq n}}\bigg(\int \rho(\alpha,x)^sdP(x)\bigg)^{\frac{1}{s}}=e_{n,s}(P;S)
    \end{align*} If $\supp{P}$ is compact, then $\sup\limits_{x\in\supp{P}}\rho(x,0)<\infty$ and therefore $\lim\limits_{r\rightarrow\infty}e_{n,r}(P;S)\leq e_{n,\infty}(P;S)<\infty$. 
    For the second argument, as the mapping $(a_1,a_2,\cdots,a_n)\mapsto \{a_1,a_2,\cdots,a_n\}$ is $d_H$ continuous for $a_i\in \bbrD$, $\{\alpha\subseteq \bbrD:1\leq|\alpha|\leq n,\alpha\in S\}$ is thus $d_H$ compact. Consider $\alpha_r$ to be the n-optimal set for $e_{n,r}(P;S)$, there exists a subsequence $\{\alpha_{r_k}\}_k$ with the $d_H$ cluster point $\alpha\subseteq S$, then:
    \[\biggp{\int (\rho(\alpha_{r_k},x)-\rho(\alpha,x))^{r_k}dP(x)}^{\frac{1}{r_k}}\leq \sup\limits_{x\in\supp{P}}|\rho(\alpha_{r_k},x)-\rho(\alpha,x)|=d_H(\alpha_{r_k},\alpha)\]
    with $\alpha_{r_k}\subseteq S$ and $\alpha\subseteq S$. In addition:
    \begin{align*}
    \biggp{\int \rho(\alpha_{r_k},x)^{r_k}dP(x)}^{\frac{1}{r_k}} &\geq \biggp{\int \rho(\alpha,x)^{r_k}dP(x)}^{\frac{1}{r_k}}-\biggp{\int \bigga{\rho(\alpha_{r_k},x)-\rho(\alpha,x)}^{r_k}dP(x)}^{\frac{1}{r_k}}\\ 
    &\geq \biggp{\int \rho(\alpha,x)^{r_k}dP(x)}^{\frac{1}{r_k}}-d_H(\alpha_{r_k},S)
    \end{align*}
    from above we can observe 
    \begin{align*}
    \lim\limits_{k\rightarrow\infty}e_{n,r_k}(P;S)&=\lim\limits_{k\rightarrow\infty}\biggp{\int \rho(\alpha_{r_k},x)^{r_k}dP(x)}^{\frac{1}{r_k}}\geq \lim\limits_{k\rightarrow\infty}\biggp{\int \rho(\alpha,x)^{r_k}dP(x)}^{\frac{1}{r_k}}=\sup\limits_{x\in\supp{P}}\rho(x,\alpha)\\ 
    &\geq \inf\limits_{\substack{\alpha\subseteq S\\ |\alpha|\leq n}}\sup\limits_{x\in\supp{P}}\rho(x,\alpha)=e_{n,\infty}(P;S)
    \end{align*}
\end{proof}
The proof we provided in Lemma~\ref{lemma:monotone_e_n_infty_e_n_r_constrained_10.1} is in the same spirit of the unconstrained version of Lemma~\ref{lemma:monotone_e_n_infty_e_n_r_constrained_10.1} was proven in \cite[Section 4.23, Section 10.1]{graf} with the condition that $\supp{P}$ to be compact to guarantee the $d_H$ cluster points. 
In the next lemma we present the existence of n-optimal sets for constrained quantizations. 
For the case that $r=\infty$, the following lemma gives an equivalent form of $e_{n,\infty}(K)$ and $e_{n.\infty}(K;S)$ in terms of Hausdorff distance.
\begin{lemma}
    Suppose $K\Subset \bbrD$ is compact, then $C_{n,\infty}(K)$ is nonempty, plus $$e_{n,\infty}(K)=\inf\limits_{\substack{\alpha\subseteq \bbrD\\ |\alpha|\leq n}}d_H(\alpha,K)$$
    In addition, suppose $S\subseteq \bbrD$ is closed (and therefore $\pi_S(K)$ is compact) then $C_{n,\infty}(K;S)$ is nonempty with $$e_{n,\infty}(K;S)=\inf\limits_{\substack{\alpha\subseteq S\\ |\alpha|\leq n}}d_H(\alpha,K)$$
    \label{lemma:e_n_infty_and_hausdorff_10.4}
\end{lemma}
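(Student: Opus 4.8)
\emph{Plan.} The plan is to prove the constrained identity $e_{n,\infty}(K;S)=\inf_{\alpha\subseteq S,\,|\alpha|\le n}d_H(\alpha,K)$ together with nonemptiness of $C_{n,\infty}(K;S)$, and then to recover the unconstrained statement by taking $S=\bbrD$ (which is closed). Assume $K,S\neq\varnothing$, the degenerate cases being vacuous, and fix $a_0\in S$, $x_0\in K$; then $e_{n,\infty}(K;S)\le \sup_{x\in K}\rho(x,a_0)\le \rho(a_0,x_0)+\diam(K)<\infty$, so $r:=e_{n,\infty}(K;S)\in[0,\infty)$. One inequality is immediate: for every admissible $\alpha$ one has $\sup_{x\in K}\rho(x,\alpha)\le d_H(\alpha,K)$ straight from the definition of the Hausdorff distance, and infimizing gives $r\le \inf_{\alpha\subseteq S,\,|\alpha|\le n}d_H(\alpha,K)$. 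The work lies in the reverse inequality, and the argument will simultaneously produce an optimal configuration.

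\emph{Pruning step.} Writing $m(\alpha):=\sup_{x\in K}\rho(x,\alpha)$, the key observation I would isolate is that a configuration can be trimmed to its centers near $K$ without changing its covering radius: given $\alpha\subseteq S$ with $|\alpha|\le n$, set $\alpha':=\{a\in\alpha:\rho(a,K)\le m(\alpha)\}$. For each $x\in K$, choosing $a\in\alpha$ with $\rho(x,a)=\rho(x,\alpha)\le m(\alpha)$ forces $\rho(a,K)\le\rho(a,x)\le m(\alpha)$, hence $a\in\alpha'$ and $\rho(x,\alpha')\le m(\alpha)$; so $\alpha'\neq\varnothing$ and $m(\alpha')=m(\alpha)$, the inequality $m(\alpha')\ge m(\alpha)$ being automatic from $\alpha'\subseteq\alpha$. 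Since $\rho(a,K)\le m(\alpha)$ for all $a\in\alpha'$ by construction, $\sup_{a\in\alpha'}\rho(a,K)\le m(\alpha)=m(\alpha')$, and therefore $d_H(\alpha',K)=m(\alpha')=m(\alpha)$. In particular every pruned near-optimal configuration lies in the sublevel set $L:=\{y\in\bbrD:\rho(y,K)\le r+1\}$, which is compact because $K$ is bounded.

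\emph{Compactness and conclusion.} I would then take $\alpha_k\subseteq S$ with $|\alpha_k|\le n$ and $m(\alpha_k)\to r$, prune to $\alpha_k'\subseteq S\cap L$ (discarding small $k$ so that $m(\alpha_k)\le r+1$), and use that $S\cap L$ is compact ($S$ closed, $L$ compact) so that the family of its nonempty subsets of cardinality at most $n$ is $d_H$-compact, being the image of $(S\cap L)^n$ under the $d_H$-continuous surjection $(a_1,\dots,a_n)\mapsto\{a_1,\dots,a_n\}$ used in the proof of Lemma~\ref{lemma:monotone_e_n_infty_e_n_r_constrained_10.1}. Passing to a subsequence $\alpha_{k_j}'\to\alpha^*$ in $d_H$ gives $\alpha^*\subseteq S$ and $|\alpha^*|\le n$; since $\alpha\mapsto m(\alpha)$ and $\alpha\mapsto d_H(\alpha,K)$ are both $1$-Lipschitz in $d_H$, it follows that $m(\alpha^*)=\lim_j m(\alpha_{k_j}')=\lim_j m(\alpha_{k_j})=r$, so $\alpha^*\in C_{n,\infty}(K;S)\neq\varnothing$, and $d_H(\alpha^*,K)=\lim_j d_H(\alpha_{k_j}',K)=\lim_j m(\alpha_{k_j})=r$. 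This yields $\inf_{\alpha\subseteq S,\,|\alpha|\le n}d_H(\alpha,K)\le r$, hence equality, attained at $\alpha^*$; specializing to $S=\bbrD$ gives the unconstrained assertion and nonemptiness of $C_{n,\infty}(K)$. Compactness of $\pi_S(K)$ noted in the statement is supplied by Section~\ref{section:preliminary_topology} and is not needed here. I do not expect a genuine obstacle: the only subtlety is the pruning step, which is exactly what prevents a minimizing sequence from escaping to infinity through redundant far-away centers, and closedness of $S$ is used only to keep $S\cap L$ compact and the limit $\alpha^*$ inside $S$.
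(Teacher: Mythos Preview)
Your proof is correct and self-contained. The paper itself does not give a detailed argument: it cites Graf--Luschgy (Sections~10.3--10.4) for the unconstrained case and indicates that the constrained identity follows by the same reasoning once nonemptiness of $C_{n,\infty}(K;S)$ is secured via Lemma~\ref{lemma:existence}. Your route differs only in that you establish nonemptiness directly---your pruning-plus-compactness argument produces the optimal $\alpha^*$ without a separate appeal to Lemma~\ref{lemma:existence}---whereas the paper would first invoke existence and then prune an optimal set. The pruning step itself (discard centers farther than $m(\alpha)$ from $K$ so that both halves of $d_H$ are controlled by $m(\alpha)$) is precisely the content of the Graf--Luschgy argument the paper points to, so the underlying mathematics coincides; your write-up simply packages existence and the Hausdorff identity in one pass, which is arguably cleaner and avoids the forward reference.
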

\begin{proof}
    The first argument is due to \cite[Section 10.3 and 10.4]{graf}. The second argument will be a genuine generalization of the proof in \cite[Section 10.4]{graf} to the constrained case, based on the nonemptiness results in Lemma~\ref{lemma:existence}.
\end{proof}

\subsection{Properties of Projection Map}\label{section:preliminary_topology}
In this subsection, we investigate the topological properties in the sequel of \eqref{eqn:projection} and \eqref{eqn:reversal}.
\begin{lemma}
    \label{closedness_pi_inv_s_k}
    Suppose $\pi_S^{-1}(E)$ is defined in \eqref{eqn:reversal} for $E\in\pi_S(K)$. $\pi_S^{-1}(E)$ is closed if  $E$ is closed.
\end{lemma}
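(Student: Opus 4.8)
The plan is to establish sequential closedness in $K$. Since $\bbrD$ is metric and $K$ is compact, it suffices to take a sequence $y_k\in\pi_S^{-1}(E)$ with $y_k\to y$, note that $y\in K$ automatically, and verify $y\in\pi_S^{-1}(E)$. Two facts drive the argument: the distance function $y\mapsto\rho(y,S)$ is $1$-Lipschitz, so $\rho(y_k,S)\to\rho(y,S)$; and, because $S$ is closed in $\bbrD$, the projection set $\pi_S(y)$ is nonempty for every $y$, the value $\rho(y,S)$ being attained on the compact set $S\cap\overline{B(y,\rho(y,S)+1)}$.

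First I would extract a limiting optimal projector. For each $k$ choose $z_k\in\pi_S(y_k)$; by the definition of $\pi_S^{-1}(E)$ in \eqref{eqn:reversal}, every element of $\pi_S(y_k)$ lies in $E$, so $z_k\in E$. Fixing an arbitrary $z_0\in S$, the estimate $\rho(0,z_k)\le\rho(0,y_k)+\rho(y_k,z_k)=\rho(0,y_k)+\rho(y_k,S)\le\rho(0,y_k)+\rho(y_k,z_0)$ shows $\{z_k\}$ is bounded, since $y_k\to y$. Passing to a subsequence, $z_k\to z_\ast$; then $z_\ast\in S$ by closedness of $S$, $z_\ast\in E$ by closedness of $E$, and $\rho(y,z_\ast)=\lim_k\rho(y_k,z_k)=\lim_k\rho(y_k,S)=\rho(y,S)$, so $z_\ast\in\pi_S(y)\cap E$. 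In particular $y$ projects into $E$.

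The remaining step — and the one I expect to be the real obstacle — is to promote this to the full containment $\pi_S(y)\subseteq E$ required by \eqref{eqn:reversal}, i.e. to exclude a stray branch $z\in\pi_S(y)\setminus E$. Here the multivaluedness of $\pi_S$ is genuinely at play: the naive limiting argument above controls only one branch. I would argue by contradiction, using that $S\setminus E$ is relatively open in $S$: given $z\in\pi_S(y)\setminus E$, pick $w_k\in\pi_S(y_k)$ nearest to $z$, so $\rho(y_k,w_k)=\rho(y_k,S)\le\rho(y_k,z)\to\rho(y,z)=\rho(y,S)$, and then invoke the geometric structure of the projection developed in Section~\ref{section:preliminary_topology} (the local description of $\pi_S$ near points of $\pi_S(K)$ together with the measurable selector of Lemma~\ref{lemma:pi_s_k_single_valued}) to force $w_k\to z$ along a subsequence; since each $w_k\in E$ and $E$ is closed, this gives $z\in E$, contradicting the choice of $z$. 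Assembling the two steps yields $\pi_S(y)\subseteq E$, hence $y\in\pi_S^{-1}(E)$, and closedness follows.
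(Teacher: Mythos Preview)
Your first step is sound and matches the paper's approach in content if not in form: the paper argues via continuity of $x\mapsto\rho(x,S)$ and $x\mapsto\rho(x,F)$ that $\rho(\bar s,F)=\rho(\bar s,S)$, whence some minimizer for $\rho(\bar s,\cdot)$ over $S$ lies in $F$; your sequential extraction of $z_k\in\pi_S(y_k)\subseteq E$ with $z_k\to z_\ast\in\pi_S(y)\cap E$ reaches the same conclusion more explicitly.

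The genuine gap is precisely where you locate it, and your proposed fix does not close it. The appeal to ``the geometric structure of the projection developed in Section~\ref{section:preliminary_topology}'' is not substantiated: the measurable selector of Lemma~\ref{lemma:pi_s_k_single_valued} says nothing about approximating a \emph{given} branch $z\in\pi_S(y)$, and the stability result (Lemma~\ref{lemma:approximation_pi_s_x_and_pi_s_x_n}) only gives $\rho(\pi_S(\bar x),\pi_S(x_n))\to 0$, i.e.\ that \emph{some} pair of points gets close, not that every element of $\pi_S(\bar x)$ is a limit of elements of $\pi_S(x_n)$. Your claim ``$w_k\to z$ along a subsequence'' therefore has no justification. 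Concretely, take $K=[-1,1]$, $S=\{-2,2\}$, $E=\{2\}$, $y_k=1/k\to 0$: then $\pi_S(y_k)=\{2\}\subseteq E$ for all $k$, so the only choice is $w_k=2$, which does not converge to $z=-2\in\pi_S(0)\setminus E$. In this example $\pi_S^{-1}(E)=(0,1]$, which is not closed in $K$, so no argument can supply the containment $\pi_S(y)\subseteq E$ in general.

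For comparison, the paper's proof negates $\bar s\in\pi_S^{-1}(F)$ as ``$\pi_S(\bar s)\cap F=\varnothing$'' and contradicts \emph{that}; read literally, it therefore establishes closedness of the weak preimage $\{y\in K:\pi_S(y)\cap F\neq\varnothing\}$, which \emph{is} the standard upper-semicontinuity statement alluded to in Remark~\ref{remark:upper_semi_continuity} and is exactly what your first step proves. Your correct observation that the definition in \eqref{eqn:reversal} demands the stronger containment $\pi_S(y)\subseteq E$ is, in light of the example above, the observation that this stronger closedness cannot hold without further hypotheses.
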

\begin{proof}
    For $F\subseteq \pi_S(K)$ a closed set, let $\{s_n\}_n$ be a converging sequence in $\pi_S^{-1}(F)$ with cluster point $\overline{s}$. Assuming $\overline{s}\not\in \pi_S^{-1}(F)$, which means $\pi_S(\overline{s})\cap F=\varnothing$, by the continuity of the function $f(x)=\inf\limits_{z\in S}\rho(x,z)$, we have $\lim\limits_{n\rightarrow\infty}f(s_n)=f(\overline{s})=\rho(\overline{s},\pi_S(K))$. On the other hand $\lim\limits_{n\rightarrow\infty}f(s_n)=\lim\limits_{n\rightarrow\infty}\inf\limits_{z\in S}\rho(s_n,z)=\rho(\overline{s},F)$. Therefore $\rho(\overline{s},F)=\rho(\overline{s},\pi_S(K))$, and thus $\overline{s}\in \pi_S^{-1}(F)$, contradicting $\overline{s}\not\in \pi_S^{-1}(F)$.
\end{proof}
\begin{remark}\label{remark:upper_semi_continuity}
    For a multivalued function $f:X\rightarrow 2^Y$, where $f(x)$ is a set for $x\in X$, we say that $f$ is upper semi-continuous if for any closed set $G\subseteq S$, the set $\{y\in X: f(y)\subseteq G, f(y)\neq\varnothing\}$ is closed, by Kuratowski's definition in \cite{kuratowski2014topology}, and therefore $\pi_S:K\rightarrow 2^S$ is upper semi-continuous.
\end{remark}

\begin{lemma}
\label{compactness_pi_s_k}
    Suppose $S$ is a closed set and $K=\supp{P}$ is a compact set for a probability measure $P$. $\pi_S(K)$ and $\pi_S^{-1}(E)$ is defined in \eqref{eqn:projection} and \eqref{eqn:reversal} for $E\subseteq \pi_S(K)$. Then $\pi_S(K)$ is compact.
\end{lemma}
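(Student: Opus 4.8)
The plan is to show that $\pi_S(K)$ is both bounded and closed in $\bbrD$, whence compact. (If $S=\varnothing$ then $\pi_S(x)=\varnothing$ for every $x$, so $\pi_S(K)=\varnothing$ and there is nothing to prove; assume $S\neq\varnothing$ and fix $s_0\in S$.) Note first that for each $x$ the infimum defining $\rho(x,S)$ is attained: a minimizing sequence in $S$ is bounded, hence has a cluster point, which lies in $S$ since $S$ is closed; so $\pi_S(x)\neq\varnothing$ for all $x\in K$.

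For boundedness, use that $K$ is compact, say $K\subseteq B(0,R)$. For $x\in K$ and $y\in\pi_S(x)$ we have $\rho(x,y)=\rho(x,S)\le\rho(x,s_0)\le R+\norm{s_0}$, so $\norm{y}\le\norm{x}+\rho(x,y)\le 2R+\norm{s_0}$; thus $\pi_S(K)\subseteq B(0,\,2R+\norm{s_0})$.

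For closedness, take $\{y_n\}_n\subseteq\pi_S(K)$ with $y_n\to y$, and for each $n$ pick $x_n\in K$ with $y_n\in\pi_S(x_n)$, i.e. $\rho(x_n,y_n)=\rho(x_n,S)$. By compactness of $K$, after passing to a subsequence $x_n\to x\in K$. The function $z\mapsto\rho(z,S)=\inf_{w\in S}\rho(z,w)$ is $1$-Lipschitz, hence $\rho(x_n,S)\to\rho(x,S)$; and $\rho(x_n,y_n)\to\rho(x,y)$ because $x_n\to x$, $y_n\to y$. Therefore $\rho(x,y)=\rho(x,S)$. Since $S$ is closed and $y_n\in S$, also $y\in S$, so $y\in\pi_S(x)\subseteq\pi_S(K)$, proving $\pi_S(K)$ closed. (Alternatively, one could invoke Remark~\ref{remark:upper_semi_continuity}: $\pi_S$ is upper semi-continuous with nonempty compact values, and the image of a compact set under such a map is compact; the direct sequential argument avoids this machinery.)

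I do not expect any genuine obstacle here: the only points requiring care are the attainment of $\rho(x,S)$ for closed $S$ and the simultaneous passage to the limit in the two arguments of $\rho$, both of which are routine. The essential hypothesis is the compactness — not merely closedness — of $K$, which is what forces both the boundedness of $\pi_S(K)$ and the existence of the limit point $x$.
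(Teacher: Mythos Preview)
Your proof is correct. The paper argues differently: it considers the distance function $f(x)=\inf_{z\in S}\rho(x,z)$ restricted to $K$, decomposes the compact image $g(K)\subset\bbr^+\cup\{0\}$ as a finite union of closed sets $F_i$, pulls these back to $G_i=g^{-1}(F_i)$ covering $K$, and then asserts that each $\pi_S(G_i)$ is bounded and closed, hence $\pi_S(K)=\bigcup_i\pi_S(G_i)$ is compact. Your route is more transparent: you prove boundedness by a single triangle-inequality estimate against a fixed $s_0\in S$, and closedness by a direct sequential argument (pick $x_n\in K$ with $y_n\in\pi_S(x_n)$, extract a convergent subsequence in $K$, and pass to the limit in the identity $\rho(x_n,y_n)=\rho(x_n,S)$). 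The paper's decomposition adds no real leverage here---one could take $N=1$, $F_1=g(K)$, and the claim ``$\pi_S(G_1)$ is bounded and closed'' is exactly the statement to be proved---so your argument is both shorter and more self-contained. Your parenthetical appeal to upper semi-continuity (Remark~\ref{remark:upper_semi_continuity}) is also a legitimate alternative and is in fact closer in spirit to the closed-graph viewpoint the paper develops in Lemma~\ref{lemma:closed_graph_property}.
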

\begin{proof}
    As the function $f(x)=\inf\limits_{z\in S}\rho(x,z)$ is continuous and uniformly continuous on $K$, let $g=f|_K$, $g(K)$ is compact $\mathbb R^{+}\cup\{0\}$ and $g(K)=\bigcup\limits_{i=1}^N F_i$ where $F_i$ is closed. Denote $G_i=g^{-1}(F_i)$ and we have $\bigcup\limits_{i=1}^N G_i=K$, as $g^{-1}(g(K))=K$ by the fact that $g=f|_K$. $\pi_S(G_i)$ is bounded and closed, therefore $\pi_S(K)=\bigcup\limits_{i=1}^N \pi_S(G_i)$ is compact.
\end{proof}
\begin{lemma}[Closed graph property of $\pi_S(K)$]\label{lemma:closed_graph_property}
    For $S\in\bbrD$ a closed set, and $K\Subset\bbrD$ a compact set. The graph of the projection $\pi_S$, denoted, 
    \[\mathrm{{Gr } }\pi_S:\{(x,y)\in K\times S\}:\rho(x,y)=\rho(x,S)\},\]
    is compact.
\end{lemma}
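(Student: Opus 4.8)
The plan is to show the graph $\mathrm{Gr}\,\pi_S$ is a closed subset of the compact set $K\times \pi_S(K)$, hence compact. First I would note that $K$ is compact by hypothesis and $\pi_S(K)$ is compact by Lemma~\ref{compactness_pi_s_k}, so $K\times \pi_S(K)$ is compact by Tychonoff. Since $\mathrm{Gr}\,\pi_S\subseteq K\times \pi_S(K)$ (each pair $(x,y)$ with $\rho(x,y)=\rho(x,S)$ has $y\in\pi_S(x)\subseteq \pi_S(K)$), it suffices to prove $\mathrm{Gr}\,\pi_S$ is closed.

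For closedness, I would take a sequence $(x_n,y_n)\in \mathrm{Gr}\,\pi_S$ with $(x_n,y_n)\to(x,y)\in K\times\bbrD$ and show $(x,y)\in\mathrm{Gr}\,\pi_S$. The key ingredient is the continuity of the distance-to-$S$ function $f(z)=\rho(z,S)=\inf_{w\in S}\rho(z,w)$, which is $1$-Lipschitz; this is already used in the proofs of Lemma~\ref{closedness_pi_inv_s_k} and Lemma~\ref{compactness_pi_s_k}. From $\rho(x_n,y_n)=f(x_n)$, passing to the limit using joint continuity of $\rho$ and continuity of $f$ gives $\rho(x,y)=f(x)=\rho(x,S)$. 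Since $S$ is closed and $y=\lim y_n$ with $y_n\in S$, we have $y\in S$; combined with $x\in K$ this yields $(x,y)\in K\times S$ with $\rho(x,y)=\rho(x,S)$, i.e. $(x,y)\in\mathrm{Gr}\,\pi_S$. Hence $\mathrm{Gr}\,\pi_S$ is closed in $K\times\pi_S(K)$ and therefore compact.

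Alternatively, and perhaps more in the spirit of the preceding remark, this follows from the upper semi-continuity of $\pi_S$ (Remark~\ref{remark:upper_semi_continuity}) together with the compactness of the target: an upper semi-continuous multivalued map with closed values into a compact space has closed graph. I would likely present the direct sequential argument since it is self-contained and short, mentioning the usc viewpoint as a remark.

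The main obstacle here is essentially bookkeeping rather than a genuine difficulty: one must be careful that the limit point $y$ indeed lies in $S$ (using closedness of $S$) and that $x$ lies in $K$ (using compactness of $K$), since the defining relation $\rho(x,y)=\rho(x,S)$ only makes sense, as a statement about the graph, for $x\in K$ and $y\in S$. There is also a minor typo in the displayed definition of $\mathrm{Gr}\,\pi_S$ in the statement (an unmatched brace), which I would silently correct to $\mathrm{Gr}\,\pi_S=\{(x,y)\in K\times S:\rho(x,y)=\rho(x,S)\}$.
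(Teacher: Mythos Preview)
Your proposal is correct and follows essentially the same line as the paper: show $\mathrm{Gr}\,\pi_S$ is closed and contained in a compact product set. The only cosmetic differences are that the paper obtains closedness by writing $\mathrm{Gr}\,\pi_S=f^{-1}(\{0\})\cap(K\times S)$ for the continuous function $f(x,y)=\rho(x,y)-\rho(x,S)$ (rather than your sequential argument), and it argues boundedness of the $y$-coordinate directly instead of invoking Lemma~\ref{compactness_pi_s_k}; neither change is substantive.
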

\begin{proof}
    Let $f(x,y)=\rho(x,y)-\rho(x,S)$, then $f:K\times S\rightarrow\bbr$ is continuous. We can rewrite $\text{Gr}\pi_S$ as $\text{Gr}\pi_S=f^{-1}(\{0\})\cap (K\times S)$, therefore $\text{Gr}\pi_S$ is closed. For $(x,y)\in\text{Gr}\pi_S$, we can show that $\rho(y,0)$ is bounded, and therefore $\text{Gr}\pi_S$ is compact as a closed subset of the product of two compact sets.
\end{proof}
In the next lemma, we will show the existence of a measurable selected based on $\pi_S$ by the \textit{Kuratwoski-Ryll-Nardzewski measurable selection theorem}.
\begin{lemma}[Meaurable selector]
    \label{lemma:pi_s_k_single_valued}
    For a compact set $K$, if $K=\supp P$ for some Borel measure $P$, there exists a single-valued map $T:K\rightarrow \pi_S(K)$, where $T(x)\in \pi_S(x)$, such that $T$ is measurable with respect to $P$.
\end{lemma}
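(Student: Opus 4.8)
The plan is to apply the Kuratowski--Ryll-Nardzewski measurable selection theorem to the correspondence $x\mapsto\pi_S(x)$ over the measurable space $\bigl(K,\mathcal B(K)\bigr)$, with target the space $\pi_S(K)$, which is compact metric and hence Polish by Lemma~\ref{compactness_pi_s_k}. Three hypotheses must be checked: (i) every value $\pi_S(x)$ is nonempty and closed; (ii) the target space is Polish; (iii) the correspondence is weakly measurable, i.e.\ $\{x\in K:\pi_S(x)\cap U\neq\varnothing\}$ is Borel for every open $U\subseteq\pi_S(K)$. Granting these, the theorem produces a Borel map $T:K\to\pi_S(K)$ with $T(x)\in\pi_S(x)$ for every $x\in K$, and Borel measurability implies measurability with respect to the $P$-completion of $\mathcal B(K)$, which is exactly the claim.

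Item (i) is immediate: for nonempty closed $S$ the function $\rho(x,\cdot)|_S$ is continuous and coercive on a nonempty closed subset of $\bbrD$, so it attains its minimum and $\pi_S(x)\neq\varnothing$; moreover $\pi_S(x)=S\cap\{y:\rho(x,y)=\rho(x,S)\}$ is an intersection of two closed sets, hence closed. Item (ii) is precisely Lemma~\ref{compactness_pi_s_k}.

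The substantive step is (iii), and here the closed-graph results do the work. By Lemma~\ref{lemma:closed_graph_property}, $\mathrm{Gr}\,\pi_S\subseteq K\times\pi_S(K)$ is compact; hence for any closed $C\subseteq\pi_S(K)$ the set $\mathrm{Gr}\,\pi_S\cap(K\times C)$ is compact, and its image under the continuous coordinate projection to $K$, namely $\{x\in K:\pi_S(x)\cap C\neq\varnothing\}$, is compact and in particular closed. For an arbitrary open $U\subseteq\pi_S(K)$, decompose $U=\bigcup_{k\ge1}C_k$ into countably many closed sets, for instance $C_k=\{y\in U:\rho\bigl(y,\pi_S(K)\setminus U\bigr)\ge 1/k\}$; then $\{x:\pi_S(x)\cap U\neq\varnothing\}=\bigcup_{k\ge1}\{x:\pi_S(x)\cap C_k\neq\varnothing\}$ is an $F_\sigma$ set, hence Borel. (Equivalently, this recovers the upper semicontinuity of $\pi_S$ noted in Remark~\ref{remark:upper_semi_continuity}.) With (i)--(iii) established, KRN applies and yields $T$.

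I do not expect a genuine obstacle here: the only point requiring care is the passage from ``preimages of closed sets are closed'' to the weak-measurability condition phrased for open sets, which the $F_\sigma$ decomposition above handles, together with taking the target to be $\pi_S(K)$ (or $S$) so that it is genuinely Polish with closed values. If one wished to bypass KRN altogether, an alternative is to set $T(x)$ equal to the lexicographically least element of the compact set $\pi_S(x)$ and verify measurability coordinate by coordinate via the measurable maximum theorem; I will nonetheless present the KRN argument as the cleanest route.
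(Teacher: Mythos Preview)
Your proposal is correct and follows essentially the same route as the paper: invoke the Kuratowski--Ryll-Nardzewski theorem, with the compact-graph lemma (Lemma~\ref{lemma:closed_graph_property}) supplying the measurability hypothesis and closedness of the values $\pi_S(x)$ handled directly. The paper's proof is a terse two-line sketch that cites the same ingredients without spelling out the weak-measurability verification, so your version simply fills in the details the paper omits.
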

\begin{proof}
    In Lemma~\ref{lemma:closed_graph_property}, we showed $\mathrm{Gr}\pi_S$ is compact. In addition, $\pi_S(x)$ is closed. Then we can use the Kuratwoski-Ryll-Nardzewski theorem in \cite{jureczko2023remarks} and \cite[Page 189]{srivastava2008course} to guarantee the existence of a measuable selector.
\end{proof}
In the following, we present a lemma on the nowhere density of $\pi_S(K)$, which will be utilized in the perturbation argument of the quantizers to construct the lower asymptotics of the constrained quantization dimensions in Section~\ref{section:perturbative}.
\begin{lemma}[Nowhere density of $\pi_S(K)$]\label{lemma:nowhere_dense_pi_s_k}
    Suppose $S\cap K=\varnothing$, $S$ is closed in $\bbrD$ and $K$ is compact in $\bbrD$, then $\pi_S(K)$ is nowhere dense.
\end{lemma}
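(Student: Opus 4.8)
The plan is to prove the stronger statement $\pi_S(K)\subseteq\partial S$, from which the claim is immediate: since $S$ is closed, $\partial S=S\setminus\mathrm{int}(S)$ is a closed set with empty interior (any point interior to $S\setminus\mathrm{int}(S)$ would have a neighborhood inside $S$, hence inside $\mathrm{int}(S)$, a contradiction), so $\partial S$ is nowhere dense in $\bbrD$ and so is its subset $\pi_S(K)$. Alternatively, using Lemma~\ref{compactness_pi_s_k} one may argue directly that the compact set $\pi_S(K)$ has empty interior and is therefore nowhere dense. The only use of the hypothesis $S\cap K=\varnothing$ is this: every $x\in K$ lies outside the closed set $S$, so $\rho(x,S)>0$ and $\pi_S(x)\neq\varnothing$.

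The geometric core is the elementary fact that a nearest point in $S$ to a point outside $S$ must be approached from the complement of $S$. Fix $x\in K$ and $y\in\pi_S(x)$, and set $d_0:=\rho(x,y)=\rho(x,S)>0$. For $t\in(0,1)$ let $z_t:=(1-t)y+tx$ be the point on the open segment joining $y$ to $x$; then $\rho(x,z_t)=(1-t)\,d_0<d_0=\rho(x,S)$, so $z_t\notin S$. Letting $t\to 0^+$ gives $z_t\to y$, hence $y\in\overline{\bbrD\setminus S}$; combined with $y\in\pi_S(x)\subseteq S$ this yields $y\in\partial S$. Since $x\in K$ and $y\in\pi_S(x)$ were arbitrary, $\pi_S(K)\subseteq\partial S$, completing the argument.

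There is no serious obstacle here; the proof is short. The only points requiring care are that $\pi_S(x)$ is genuinely nonempty (which uses that $S$ is closed in the finite-dimensional space $\bbrD$, so the distance to $S$ is attained), that ``nowhere dense'' is understood in $\bbrD$ rather than relative to $S$, and --- if one takes the second route above --- the invocation of compactness of $\pi_S(K)$ from Lemma~\ref{compactness_pi_s_k} to pass from empty interior to nowhere density.
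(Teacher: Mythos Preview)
Your proof is correct. Both your argument and the paper's hinge on the same geometric observation---points on the open segment from $x\in K$ toward its nearest point $y\in S$ are strictly closer to $x$ than $y$ is, hence cannot lie in $S$---but the packaging differs. The paper argues by contradiction: assuming an open ball $B(y,r_0)\subseteq\pi_S(K)$, one produces a point of $S$ inside that ball which is closer to $x$ than $y$, violating $y\in\pi_S(x)$. You instead establish the structural containment $\pi_S(K)\subseteq\partial S$ directly. This route is cleaner and strictly stronger: nowhere density follows at once from $\partial S$ being closed with empty interior, without needing the compactness of $\pi_S(K)$ from Lemma~\ref{compactness_pi_s_k}, and the containment $\pi_S(K)\subseteq\partial S$ is a useful standalone fact.
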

\begin{proof}
    Suppose $\pi_S(K)$ is not nowhere dense, then $(\overline{\pi_S(K)})^\circ=({\pi_S(K)})^\circ\neq\varnothing$, that is, there exists $y\in\pi_S(K)$ and some $r_0>0$ such that $B(y,r_0)\subseteq \pi_S(K)$. Let $x\in\pi_S^{-1}(y)$, and select $z\in B(y,r_0)$ such that $\rho(y,z)=r_0/2$. We claim that $\rho(x,z)\geq \rho(x,y)$, because otherwise $x\not\in\pi_S^{-1}(y)$. Thereafter, $\rho(x,z)\geq \rho(x,y)\geq \rho(x,z)+\rho(y,z)=\rho(x,z)+r_0/2$, there is then a contradiction.
\end{proof}

\begin{lemma}[Stability]
    \label{lemma:approximation_pi_s_x_and_pi_s_x_n}
    Let $\{x_n\}_n\subseteq K$ be a sequence converging to $\overline{x}$, then $\lim\limits_{n\rightarrow\infty}\rho(\pi_S(\overline{x}),\pi_S(x_n))=0$.
\end{lemma}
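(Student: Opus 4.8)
The plan is to obtain this as a quantitative reformulation of the upper semi-continuity of $\pi_S$ already recorded in Remark~\ref{remark:upper_semi_continuity} (equivalently, the closed-graph property of Lemma~\ref{lemma:closed_graph_property}), combined with the compactness of $\pi_S(K)$ from Lemma~\ref{compactness_pi_s_k}. Here I read $\rho(\pi_S(\overline x),\pi_S(x_n))$ as the one-sided Hausdorff excess $\sup_{y\in\pi_S(x_n)}\rho\bigl(y,\pi_S(\overline x)\bigr)$; the asymmetry is genuine, since metric projections are in general only upper, not lower, semi-continuous, so one should not expect the full Hausdorff distance $d_H(\pi_S(\overline x),\pi_S(x_n))$ to vanish.

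First I would argue by contradiction: if the conclusion fails there are $\varepsilon>0$, a subsequence (relabelled $\{x_n\}_n$), and points $y_n\in\pi_S(x_n)$ with $\rho\bigl(y_n,\pi_S(\overline x)\bigr)\geq\varepsilon$ for every $n$. Next I would bound the $y_n$: by definition $\rho(x_n,y_n)=\rho(x_n,S)$, and since $t\mapsto\rho(t,S)$ is $1$-Lipschitz we have $\rho(x_n,S)\to\rho(\overline x,S)$, so $\{y_n\}_n$ is bounded; as $S$ is closed, a further subsequence converges to some $\overline y\in S$. Passing to the limit in $\rho(x_n,y_n)=\rho(x_n,S)$ and using continuity of $\rho$ and of $\rho(\cdot,S)$ gives $\rho(\overline x,\overline y)=\rho(\overline x,S)$, i.e. $\overline y\in\pi_S(\overline x)$ --- this is exactly the closed-graph statement of Lemma~\ref{lemma:closed_graph_property} applied to $(x_n,y_n)\in\mathrm{Gr}\,\pi_S$. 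On the other hand $\rho\bigl(y_n,\pi_S(\overline x)\bigr)\geq\varepsilon$ for all $n$ forces $\rho\bigl(\overline y,\pi_S(\overline x)\bigr)\geq\varepsilon>0$, contradicting $\overline y\in\pi_S(\overline x)$; hence the excess tends to $0$.

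I do not expect a real obstacle here: the argument is a standard compactness-and-closed-graph routine, and the only point requiring care is conceptual, namely that the natural continuity statement for a metric projection is the upper-semicontinuous (one-sided) one, and that $\pi_S(\overline x)$ must be genuinely compact for the extracted limit $\overline y$ to be trapped at distance $\geq\varepsilon$ from it. This is also why the nowhere-density hypothesis of Lemma~\ref{lemma:nowhere_dense_pi_s_k}, rather than this stability lemma alone, is what will be invoked when perturbing quantizers off $\pi_S(K)$ in Section~\ref{section:perturbative}.
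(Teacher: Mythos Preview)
Your argument is correct, and in fact proves a slightly stronger statement than the paper's: you establish that the one-sided Hausdorff excess $\sup_{y\in\pi_S(x_n)}\rho(y,\pi_S(\overline x))\to 0$, whereas the paper's proof (and its usage of $\rho$ for sets elsewhere) is consistent with the weaker infimum-distance reading $\inf_{p\in\pi_S(\overline x),\,y\in\pi_S(x_n)}\rho(p,y)\to 0$. Your explicit flag about which asymmetry is being proved is well-placed.

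The approaches share the same skeleton---contradiction plus compactness---but differ in execution. You go straight through the closed-graph property: pick bad $y_n\in\pi_S(x_n)$, extract a limit $\overline y$, and observe $(\overline x,\overline y)\in\mathrm{Gr}\,\pi_S$ forces $\overline y\in\pi_S(\overline x)$. The paper instead works indirectly with distance functions: it first establishes $\lim_m\rho(\overline x,\pi_S(x_m))=\rho(\overline x,\pi_S(\overline x))$ via a chain of triangle inequalities, then argues that any sequence $y_k\in\bigcup_n\pi_S(x_n)$ realizing this limit would subconverge (by compactness of $\pi_S(K)$) to a point of $\pi_S(\overline x)$, contradicting the assumed gap. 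Your route is shorter and makes the role of Lemma~\ref{lemma:closed_graph_property} transparent; the paper's route avoids naming the closed-graph lemma but is noticeably more circuitous.
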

\begin{proof}
    Assume the statement is false; then there exist some $a>0$ and some subsequences $\{x_{n_l}\}_l$ such that $\rho(\pi_S(\overline{x}),\pi_S(x_{n_l}))>a$. Denote $\{x_{n_l}\}_l$ as $\{x_m\}_m$. For each $m\in\mathbb N^{+}$, if $\rho(\overline{x},\pi_S(\overline{x}))=\rho(\overline{x},\pi_S(x_n))$, then $\pi_S(\overline{x})\cap \pi_S(x_n)\neq \varnothing$, contradicting $\rho(\pi_S(\overline{x}),\pi_S(x_{m}))>a$. Therefore $\rho(\overline{x},\pi_S(\overline{x}))<\rho(\overline{x},\pi_S(x_n))$. 
    By the continuity of the function $f(x)=\inf\limits_{z\in S}\rho(x,z)$, we know that $\lim\limits_{m\rightarrow\infty}\rho(x_m,\pi_S(x_m))=\lim\limits_{m\rightarrow\infty}\inf\limits_{y\in S}\rho(x_m,y)=\inf\limits_{y\in S}\rho(\overline{x},y)=\rho(\overline{x},\pi_S(\overline{x}))$, and moreover $\lim\limits_{m\rightarrow\infty}\rho(\overline{x},\pi_S(x_m))=\rho(\overline{x},\pi_S(\overline{x}))$ by the fact that \begin{align*}
        \lim\limits_{m\rightarrow\infty}\rho(\overline{x},\pi_S(x_m))&=\lim\limits_{m\rightarrow\infty}\inf\limits_{y\in\pi_S(x_m)}\rho(\overline{x},y)\leq \lim\limits_{m\rightarrow\infty}\inf\limits_{y\in\pi_S(x_m)}\biggp{\rho(\overline{x},x_m)+\rho(x_m,y)}\\
        &=\lim\limits_{m\rightarrow\infty}\rho(x_m,\pi_S(x_m))=\lim\limits_{m\rightarrow\infty}\inf\limits_{y\in S}\rho(x_m,y)\\
        &\leq \lim\limits_{m\rightarrow\infty}\inf\limits_{y\in S}\biggp{\rho(\overline{x},y)+\rho(\overline{x},x_m)}=\lim\limits_{m\rightarrow\infty}\rho(\overline{x},\pi_S(x_m))
    \end{align*} Assume there exists $\{y_k\}_k\subseteq\bigcup\limits_{n=1}^\infty \pi_S(x_n)$ such that $\lim\limits_{k\rightarrow\infty} \rho(\overline{x},y_k)=\rho(\overline{x},\pi_S(\overline{x}))$, by the compactness of $\pi_S(K)$ from Lemma~\ref{compactness_pi_s_k}, there exists some subsequence $\{y_{k_l}\}_l\subseteq \{y_k\}_k$ converging to some $\overline{y}\in \pi_S(\overline{x})$, contradicting $\rho(\pi_S(\overline{x}),\pi_S(x_{m}))>a$. Therefore, there exists some $b>0$ such that $\rho(\overline{x},\pi_S(\overline{x}))+b<\rho(\overline{x},y)$ for $y\in \bigg(\bigcup\limits_{z\in \pi_S(\overline{x})}\{t:t=z+s,s\in B(0,a/2)\}\bigg)^c$, and therefore $\lim\limits_{m\rightarrow\infty}\rho(\overline{x},\pi_S(x_m))>b+\rho(\overline{x},\pi_S(\overline{x}))$, contradicting the previous established equality that $\lim\limits_{m\rightarrow\infty}\rho(\overline{x},\pi_S(x_m))=\rho(\overline{x},\pi_S(\overline{x}))$.
\end{proof}
\begin{remark}
    If for any $x\in K$, for any sequence $\{x_n\}_n$ converging to $x$, there exists $p\in\pi_S(x)$ such that $\lim\limits_{n\rightarrow\infty}\rho(p,\pi_S(x_n))=0$, then there exist a measurable selector $T:K\rightarrow \pi_S(K)$ which is not only measurable but also a finite combination of continuous functions supported on Borel sets.
\end{remark}

\begin{proof}[Proof of the remark.]
    First, we claim that for $\overline{x}\in K$, there exists $G(\overline{x})$ a closed set containing $\overline{x}$, and there exists a continuous function $f:G(\overline{x})\rightarrow S$ such that $f(x)\in \pi_S(x)$ for $x\in G(\overline{x})$. To prove the first claim, it suffices to show that $f$ is continuous on $\overline{x}$, which is guaranteed by the Lemma~\ref{lemma:approximation_pi_s_x_and_pi_s_x_n}, by the compactness of $G(\overline{x})$. For each $\overline{x}$, we can generate such a continuous function at $\overline{x}$ which is also continuous on a closed neighborhood $G(\overline{x})$. Therefore, $\bigcup\limits_{\overline{x}\in K}G(\overline{x})$ forms a cover of $K$ and there exists a finite subcover $\bigcup\limits_{i=1}^k G(x_i)$, with corresponding continuous function $f_i$ on $x_i$ and $G(x_i)$. By the axiom of choice, there exists a map $T:K\rightarrow S$ with $T(x)\in \pi_S(x)$ such that $T(x)=f_i(x)$ for some $1\leq i\leq k$. We would like to show that $T^{-1}(U)$ is measurable where $U$ is open in the sense of subspace topology on $S$. Let $M_i$ be a integer only dependent to $i$, then we have
    \begin{align*}
        T^{-1}(U)=\bigcup\limits_{i=1}^k \biggp{G(x_i)\cap T^{-1}(U)}=\bigcup\limits_{i=1}^k\bigcup\limits_{l=1}^{M_i}f_i^{-1}(E_l)
    \end{align*}
    where $\{E_l\}_{l\leq M_i}$ are Borel sets and are generated by finite set operations by how we define $T$ by $f_i$ previously. We can argue from above that $T^{-1}(U)$ is Borel, and therefore measurable.
\end{proof}
\begin{remark}[Convex constraint]
    It is a well-known result in \cite[Chapter 8]{boyd2004convex} that if $S$ is convex, then $\pi_S$ is not only single-valued, but also 1-Lipschitz, following the discussion in Remark~\ref{remark:bi-lipschitz_T}, we can conclude $\dim_H(T(K))\leq \dim_H(K)$.
\end{remark}
\begin{remark}\label{remark:bi-lipschitz_T}
    If the map $T:K\rightarrow\pi_S(K)$ is bi-Lipschitz, say there exists $C'>0$ such that $\dfrac{1}{C'}\rho(x,y)\leq \rho(T(x),T(y))\leq C'\rho(x,y)$ for $x,y\in K$, then $\dim_H(K)=\dim_H(T(K))$.
\end{remark}
\begin{proof}[Proof of the remark.]
    Let $t=\dim_H(T(K))$. First, we claim that if $\rho(T(x),T(y))\leq C'\rho(x,y)$, then $\dim_H(T(K))\leq\dim_H(K)$. To show the claim, consider $K\subset\bigcup\limits_{i=1}^\infty U_i$ where $\diam(U_i)<\delta$, then $\{T(U_i)\}_{i=1}^\infty$ is a cover of $T(K)$ and $$\diam(T(U_i))=\sup\limits_{x,y\in T(U_i)}\rho(x,y)=\sup\limits_{x,y\in U_i}\rho(T(x),T(y))\leq C'\diam(U_i)<C'\delta$$
    which implies that
    \begin{align*}
        \mathcal{H}_\delta^t(K)&=\inf\limits_{\substack{K\subseteq\bigcup\limits_{i=1}^\infty U_i\\\text{diam}(U
        _i)<\delta}}\sum\limits_{i=1}^\infty \diam(U_i)^t\geq \dfrac{1}{C'}\inf\limits_{\substack{T(K)\subseteq\bigcup\limits_{i=1}^\infty T(U_i)\\ \diam(T(U
        _i))<C'\delta}}\sum\limits_{i=1}^\infty \diam(U_i)^t \\&\geq\dfrac{1}{C'}\inf\limits_{\substack{T(K)\subseteq\bigcup\limits_{i=1}^\infty U_i\\ \diam(U
        _i)<C'\delta}}\sum\limits_{i=1}^\infty \diam(U_i)^t =\mathcal{H}_{C'\delta}^t(K)
    \end{align*}
    and therefore $\mathcal{H}^t(K)\geq\mathcal{H}^t(T(K))$ which implies $\dim_H(T(K))\leq\dim_H(K)$. On the other hand, if $\rho(T(x),T(y))\geq \dfrac{1}{C'}\rho(x,y)$ then
    $$\delta>\diam(U_i)=\sup\limits_{x,y\in U_i}\rho(x,y)=\sup\limits_{T(x),T(y)}\rho(T(x),T(y))\geq \dfrac{1}{C'}\sup\limits_{T(x),T(y)\in U_i}\rho(x,y)=\dfrac{1}{C'}\diam(T^{-1}(U_i))$$
    and if we consider $\{U_i\}_{i=1}^\infty$ covers $T(K)$ with $\diam(U_i)<\delta$ then $T(K)\subseteq \bigcup\limits_{i=1}^\infty U_i$ and  $K=T^{-1}(T(K))=\bigcup\limits_{i=1}^\infty T^{-1}(U
    _i)$
    \begin{align*}
        \mathcal{H}_\delta^t(K)&=\inf\limits_{\substack{K\subseteq \bigcup\limits_{i=1}^\infty U_i\\ \diam(U_i)<\delta}}\sum\limits_{i=1}^\infty \diam(U_i)^t\leq\inf\limits_{\substack{K\subseteq \bigcup\limits_{i=1}^\infty T^{-1}(U_i)\\ \diam(T^{-1}(U_i))<C'\delta}}\sum\limits_{i=1}^\infty \diam(T^{-1}(U_i))^t\leq \\
        & \leq \inf\limits_{\substack{T(K)\subset\bigcup\limits_{i=1}^\infty U_i\\\diam(U_i)<C'\delta}}\sum\limits_{i=1}^\infty\diam(U_i)^t=\mathcal{H}_{C'\delta}^t(T(K))
    \end{align*}
    and therefore $\mathcal{H}^t(K)\leq\mathcal{H}^t(T(K))$ which implies $\dim_H(K)\leq\dim_H(T(K))$
\end{proof}

\begin{lemma}(The support of the push-forward measure $T_*P$)\label{lemma:support_pushforward_measure}
    For $P$ a probability measure compactly supported on $\supp{P}=K$, with the mapping $T:K\rightarrow \pi_S(K)$ constructed in Lemma~\ref{lemma:pi_s_k_single_valued}, and assume conditions \eqref{cond_A} or \eqref{cond_B} are true, then for the push-forward measure $T_*P$ we have $\supp{T_*P}=\overline{T(K)}$.
\end{lemma}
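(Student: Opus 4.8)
The plan is to prove the double inclusion $\supp{T_*P}\subseteq\overline{T(K)}$ and $\overline{T(K)}\subseteq\supp{T_*P}$, where recall $T_*P(E)=P(T^{-1}(E))$ and $\supp{T_*P}$ is the smallest closed set of full $T_*P$-measure, equivalently the set of all points every open neighborhood of which has positive $T_*P$-mass.

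For the inclusion $\supp{T_*P}\subseteq\overline{T(K)}$, I would argue that $\overline{T(K)}$ is a closed set with $T_*P(\overline{T(K)})=1$. Indeed $T_*P(T(K))=P(T^{-1}(T(K)))=P(K)=1$ since $T$ is defined on all of $K=\supp{P}$, hence $T_*P(\overline{T(K)})=1$ as well; since $\supp{T_*P}$ is the smallest closed full-measure set, it is contained in $\overline{T(K)}$. This direction uses only that $T$ is a map defined $P$-a.e.\ on $K$ (measurability from Lemma~\ref{lemma:pi_s_k_single_valued}) and needs no regularity hypothesis.

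For the reverse inclusion $\overline{T(K)}\subseteq\supp{T_*P}$, since $\supp{T_*P}$ is closed it suffices to show $T(K)\subseteq\supp{T_*P}$, i.e.\ that for every $x\in K$ and every $\varepsilon>0$ the ball $B(T(x),\varepsilon)$ has positive $T_*P$-measure. Now $T(x)\in\pi_S(x)\subseteq\pi_S(K)$, and $T_*P(B(T(x),\varepsilon)) = P(T^{-1}(B(T(x),\varepsilon)))$. The point is to relate $T^{-1}(B(T(x),\varepsilon))$ to the set $\pi_S^{-1}(B(T(x),\varepsilon)\cap S)$ appearing in condition~\eqref{cond_A}: by the definition of the reversal set in \eqref{eqn:reversal}, if $y\in K$ satisfies $\pi_S(y)\subseteq B(T(x),\varepsilon)\cap S$ then in particular $T(y)\in\pi_S(y)\subseteq B(T(x),\varepsilon)$, so $\pi_S^{-1}(B(T(x),\varepsilon)\cap S)\subseteq T^{-1}(B(T(x),\varepsilon))$; hence by monotonicity $P(T^{-1}(B(T(x),\varepsilon)))\ge P(\pi_S^{-1}(B(T(x),\varepsilon)\cap S))>0$ by \eqref{cond_A} applied at the point $T(x)\in\pi_S(K)$. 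This settles the reverse inclusion under \eqref{cond_A}. If instead \eqref{cond_B} is assumed, I would invoke Lemma~\ref{lemma:approximation} (cited in the excerpt as linking \eqref{cond_A} and \eqref{cond_B}) to reduce to the previous case, or argue directly that the Hausdorff convergence $d_H(\alpha_n,\pi_S(K))\to0$ of optimal $\infty$-quantizers forces every point of $\pi_S(K)$—and hence of $T(K)$—to be a limit of points carrying positive mass.

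The main obstacle is the reverse inclusion, and specifically the bookkeeping with the two different ``inverse image'' notions: $T^{-1}(\cdot)$ is the genuine preimage under the selector, while $\pi_S^{-1}(\cdot)$ is the reversal set defined via $\pi_S(y)\subseteq E$ (the $\subseteq$, not mere intersection, is what makes the inclusion $\pi_S^{-1}(B\cap S)\subseteq T^{-1}(B)$ go through cleanly). One must also be careful that $B(T(x),\varepsilon)\cap S$ is the right object to feed into \eqref{cond_A} and that $T(x)$ genuinely lies in $\pi_S(K)$ so the hypothesis applies; both are immediate from $T(x)\in\pi_S(x)$. The case split on \eqref{cond_A} versus \eqref{cond_B} is a minor annoyance rather than a real difficulty, handled by the cited equivalence in Lemma~\ref{lemma:approximation}.
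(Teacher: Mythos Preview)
Your proposal is correct and follows essentially the same route as the paper: both arguments hinge on the inclusion $\pi_S^{-1}(B(p,\varepsilon)\cap S)\subseteq T^{-1}(B(p,\varepsilon))$ (coming from $T(y)\in\pi_S(y)$) to push condition~\eqref{cond_A} through to a positive-mass statement for $T_*P$, and both handle the easy inclusion $\supp{T_*P}\subseteq\overline{T(K)}$ by the full-measure observation. The paper is terser on the \eqref{cond_B} case, simply declaring ``it suffices to assume \eqref{cond_A}'' without further comment, so your explicit acknowledgement of the case split is if anything more careful; note however that Lemma~\ref{lemma:approximation} gives only \eqref{cond_A}$\Rightarrow$\eqref{cond_B}, not the reverse, so your proposed reduction via that lemma does not literally go through---but this gap is present in the paper as well.
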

\begin{proof}
    It suffices to assume condition \eqref{cond_A} to be true. For any $x\in\pi_S(K)$, and for any $\varepsilon>0$, by the definition in \eqref{eqn:reversal} we have 
    \begin{align*}
        T^{-1}(B(x,\varepsilon)\cap S)&=\{y\in K:T(y)\in B(x,\varepsilon)\cap S\}\\
        &\supset \{y\in K:\pi_S(y)\subseteq B(x,\varepsilon)\cap S\}=\pi_S^{-1}(B(x,\varepsilon)\cap S)
    \end{align*}
    and therefore $$T_*P(B(x,\varepsilon)\cap S)=P(T^{-1}(B(x,\varepsilon)\cap S))\geq P(\pi_S^{-1}(B(x,\varepsilon)\cap S))>0$$ 
    which implies that $\overline{T(K)}\subseteq \supp{T_*P}$, and on the other hand, $\supp{T_*P}\subseteq \overline{T(K)}$ as $T(K)\subseteq\overline{T(K)}$.
\end{proof}
\begin{lemma}[Lower Ahlfors regularity of the measure $T_*P$]\label{lemma:lower_ahlfors_reg_T_*P}
    For $S\subseteq\bbrD$ a closed set, $P$ a probability measure supported on $K$ which is a compact set, suppose the condition \eqref{cond_D} is true, then there exists $C>0$ such that for any $x\in \overline{T(K)}$ and for any $r>0$ we have $T_*P(B(x,r)\cap S)\geq Cr^s$ where $s=\dim_H(\overline{T(K)})$.
\end{lemma}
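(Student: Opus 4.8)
The plan is to transport condition \eqref{cond_D} forward along the measurable selector $T$, in the same spirit as the proof of Lemma~\ref{lemma:support_pushforward_measure}. First I would note that \eqref{cond_D} strengthens \eqref{cond_A}: the bound $P(\pi_S^{-1}(B(x,\varepsilon)\cap S))>C\varepsilon^{s}$ in particular gives $P(\pi_S^{-1}(B(x,\varepsilon)\cap S))>0$, so \eqref{cond_A} holds and Lemma~\ref{lemma:support_pushforward_measure} is available. That lemma (and its proof) yields $\supp{T_*P}=\overline{T(K)}$ and, along the way, $\pi_S(K)\subseteq\supp{T_*P}$; together with $T(K)\subseteq\pi_S(K)$ and the compactness of $\pi_S(K)$ from Lemma~\ref{compactness_pi_s_k}, this forces
\[
\overline{T(K)}=\pi_S(K)=\supp{T_*P}.
\]
In particular $\dim_H(\overline{T(K)})=\dim_H(\pi_S(K))=s$, so the exponent in the statement is unambiguous, and every point of $\overline{T(K)}$ is a legitimate base point for \eqref{cond_D}.

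Next I would fix $x\in\overline{T(K)}=\pi_S(K)$ and $r>0$, and push the measure forward. Since $T(y)\in\pi_S(y)$ for every $y\in K$, any $y$ with $\pi_S(y)\subseteq B(x,r)\cap S$ automatically satisfies $T(y)\in B(x,r)\cap S$, i.e.
\[
\pi_S^{-1}(B(x,r)\cap S)\subseteq T^{-1}(B(x,r)\cap S).
\]
(This inclusion is unaffected by the fact that $B(x,r)\cap S$ need not lie in $\pi_S(K)$, since $\pi_S(y)\subseteq\pi_S(K)$ always.) Applying $P$ and using the definition of the push-forward,
\[
T_*P(B(x,r)\cap S)=P\bigl(T^{-1}(B(x,r)\cap S)\bigr)\ \geq\ P\bigl(\pi_S^{-1}(B(x,r)\cap S)\bigr)\ >\ C\,r^{s},
\]
the last inequality being precisely \eqref{cond_D} with $\varepsilon=r$. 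Keeping the same constant $C$ as in \eqref{cond_D} gives $T_*P(B(x,r)\cap S)\geq Cr^{s}$ for all $x\in\overline{T(K)}$, as claimed. (As usual the estimate carries its content on the range $0<r\le\diam(\pi_S(K))$; for larger $r$ one has $B(x,r)\cap S\supseteq\pi_S(K)\supseteq T(K)$, so $T_*P(B(x,r)\cap S)=1$.)

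There is no substantial obstacle here — the analytic content is entirely contained in \eqref{cond_D}, and what remains is bookkeeping. The one point demanding a little care is the order of operations: one must establish $\overline{T(K)}=\pi_S(K)$ (hence that $s$ is the same Hausdorff dimension in both formulations, and that \eqref{cond_D} may legitimately be invoked at points of $\overline{T(K)}$) before making the transport estimate, so as not to conflate the two closed sets $\pi_S(K)$ and $\overline{T(K)}$ prematurely.
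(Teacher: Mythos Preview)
Your proposal is correct and follows essentially the same route as the paper: the key step is the inclusion $\pi_S^{-1}(B(x,r)\cap S)\subseteq T^{-1}(B(x,r)\cap S)$ (already established in the proof of Lemma~\ref{lemma:support_pushforward_measure}), after which condition~\eqref{cond_D} gives the bound immediately. Your additional care in verifying $\overline{T(K)}=\pi_S(K)$, so that the exponent $s$ and the base points for \eqref{cond_D} match up, is a point the paper's short proof leaves implicit.
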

\begin{proof}
    In the proof of Lemma~\ref{lemma:support_pushforward_measure} we established that $T_*P(B(x,\varepsilon))\geq P(\pi_S^{-1}(B(x,\varepsilon)\cap S))$, then trivially the lemma is true by the condition \eqref{cond_D}.
\end{proof}
Next we give the generalized results of \cite[Lemma 12.6]{graf} on the comparison of $\dim_H ({T(K)})$ and $\dim_H(K)$.

We present several illustrative examples of projection maps in Section~\ref{section:examples}.

\subsection{Core Lemmas for Constrained Quantization}\label{section:preliminary_constrained_quantization}
We start this subsection with an original result generalizing the previously established existence results from Lemma~\ref{lemma:existence_previously}
\begin{lemma}[Nonemptiness of $C_{n,r}(P;S)$]\label{lemma:existence}
    For $0<r<\infty$, if $\int \rho(x,0)^r dP(x)<\infty$, then $C_{n,r}(P;S)\neq\varnothing$. Moreover, if $P$ is compactly supported in $\bbrD$, $C_{n,\infty}(P;S)\neq\varnothing$.
\end{lemma}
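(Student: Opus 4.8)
The plan is to run the direct method of the calculus of variations, as in \cite[Section 4.12]{graf}, with two adaptations forced by our hypotheses: quantizers must stay in the closed set $S$, which need not be compact, and a minimizing sequence must be prevented from escaping to infinity. I take $S\neq\varnothing$ as a standing assumption (otherwise the claim is vacuous). First I would record that the value $V:=e_{n,r}^r(P;S)$ is finite: picking any $a_0\in S$ and using $\rho(x,a_0)^r\le 2^r(\rho(x,0)^r+\rho(0,a_0)^r)$ together with the moment hypothesis $\int\rho(x,0)^r\,dP(x)<\infty$ shows the one-point set $\{a_0\}$ is admissible with finite cost, so $V<\infty$. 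Note that, unlike Lemma~\ref{lemma:existence_previously}, we do not assume that $n$ centres have positive-probability Voronoi cells; this is harmless because $C_{n,r}(P;S)$ only requires $|\alpha|\le n$, so the minimizer we produce is allowed to carry fewer than $n$ points.

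For $0<r<\infty$, take a minimizing sequence $\alpha_k\subseteq S$, $|\alpha_k|\le n$, with $\int\rho(x,\alpha_k)^r\,dP(x)\to V$. Passing to a subsequence I may assume $|\alpha_k|\equiv m$, and, writing $\alpha_k=\{a_{1,k},\dots,a_{m,k}\}$, a finite diagonal extraction arranges that for each coordinate $j$ either $a_{j,k}\to a_j$ (which lies in $S$ since $S$ is closed) or $\|a_{j,k}\|\to\infty$. Let $J$ index the convergent coordinates and set $\alpha=\{a_j:j\in J\}\subseteq S$, so $|\alpha|\le n$. If $J=\varnothing$ then $\rho(x,\alpha_k)\to\infty$ pointwise, and Fatou's lemma gives $V\ge\int\liminf_k\rho(x,\alpha_k)^r\,dP=\infty$, a contradiction; hence $J\neq\varnothing$ and $\alpha$ is a nonempty admissible set. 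Since an escaping coordinate contributes $\|x-a_{j,k}\|\to\infty$ while the surviving ones converge, one has $\rho(x,\alpha_k)\to\rho(x,\alpha)$, hence $\rho(x,\alpha_k)^r\to\rho(x,\alpha)^r$, for every $x$, and a second application of Fatou yields $\int\rho(x,\alpha)^r\,dP\le\liminf_k\int\rho(x,\alpha_k)^r\,dP=V$. As $\alpha$ is admissible the reverse inequality is automatic, so $\alpha\in C_{n,r}(P;S)$.

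For $r=\infty$ with $K:=\supp{P}$ compact, I would instead compactify by truncation. Fix $a_0\in\pi_S(K)$ (nonempty because $S$ is closed and $K\neq\varnothing$) and $D:=\sup_{x\in K}\rho(x,a_0)<\infty$; choosing $R$ with $K\subseteq B(0,R)$, any $a\in S$ with $\|a\|>R+D$ satisfies $\|x-a\|>D\ge\rho(x,a_0)$ for all $x\in K$, so replacing such an $a$ by $a_0$ never increases $\sup_{x\in K}\rho(x,\cdot)$. Hence the infimum defining $e_{n,\infty}(P;S)$ is unchanged if we restrict to the $d_H$-compact family of nonempty subsets of $S\cap\overline{B(0,R+D)}$ of cardinality at most $n$ (a continuous image of a compact product, allowing repeated entries), on which $\alpha\mapsto\sup_{x\in K}\rho(x,\alpha)$ is $1$-Lipschitz for $d_H$ and therefore attains its minimum; alternatively, the finite-$r$ argument transfers verbatim with $\int(\cdot)\,dP$ replaced by $\sup_{x\in K}(\cdot)$. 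The only genuine obstacle, in either regime, is ruling out escape to infinity: total escape is killed by Fatou (respectively by boundedness of $K$), partial escape is harmless because an escaping centre is asymptotically invisible to $\rho(x,\cdot)$, and closedness of $S$ guarantees that the surviving limit centres remain admissible.
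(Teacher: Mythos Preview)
Your proof is correct and is essentially the classical direct method from \cite[Section~4]{graf} adapted to the closed constraint, but it differs from the paper's route in both regimes. For $0<r<\infty$ the paper argues via compactness of the sublevel sets $\{\psi_r\le c\}$ of the map $(s_1,\dots,s_n)\mapsto\int\rho(x,\{s_1,\dots,s_n\})^r\,dP$ and a Cantor-intersection argument, importing the base case $n=1$ from \cite{pandey2023} and closing by induction; you instead run a minimizing sequence, split coordinates into convergent versus escaping, and use Fatou twice. For $r=\infty$ the paper bootstraps from the finite-$r$ case by taking $r_k\to\infty$ and extracting a $d_H$-cluster point of the optimizers $\alpha_{r_k}$, whereas you give a self-contained truncation to the compact slab $S\cap\overline{B(0,R+D)}$ and invoke $d_H$-continuity directly. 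Your argument is more elementary and standalone (no external base case, no need for the monotonicity and limit machinery of Lemma~\ref{lemma:monotone_e_n_infty_e_n_r_constrained}); the paper's approach has the incidental payoff of exhibiting an $r=\infty$ optimizer as a limit of finite-$r$ optimizers, which is of independent interest.
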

\begin{proof}
    The following proof can proceed in a similar way of \cite{pandey2023}. Construct a function $$\psi_r:\bigtimes\limits_{i=1}^n S\rightarrow\mathbb R^+\cup \{0\},\quad (s_1,\cdots,s_n)\mapsto\int \rho(x,(s_1,\cdots,s_n))^r dP(x),\quad s_i\in S,\quad i=1,\cdots,n$$
    We first show that the level set $$\{\psi_r\leq c\}:=\{(s_1,\cdots,s_n)\subseteq\bigtimes\limits_{i=1}^n S:\int \rho(x,(s_1,\cdots,s_n))^rdP(x)\leq c\}$$ is compact for $c>\inf\limits_{\beta\subseteq S,|\beta|= n}\int \rho(x,\beta)dP(x)$. 
    If we construct a decreasing sequence $\{c_n\}_n$ such that $$c_m>\inf\limits_{\beta\subseteq S,|\beta|=n}\int \rho(x,\beta)dP(x)\quad\text{ and }\quad \lim\limits_{m\rightarrow\infty}c_m=\inf\limits_{\beta\subseteq S,|\beta|= n}\int \rho(x,\beta)dP(x)$$ We can show that $\bigcap\limits_{m=1}^\infty \{\psi_r\leq c_m\}\neq\varnothing$. If $(a_1,\cdots,a_n)\in \bigcap\limits_{m=1}^\infty \{\psi_r\leq c_m\}\neq\varnothing$, we have $\inf\limits_{\beta\subseteq S,|\beta|\leq n}\int \rho(x,\beta)dP(x)\leq \int \rho(x,(a_1,\cdots,a_n))^rdP(x)<c_m$ for any $m\in\mathbb N^+$ and therefore $$\inf\limits_{\beta\subseteq S,|\beta|= n}\int \rho(x,\beta)dP(x)=\int \rho(x,(a_1,\cdots,a_n))^rdP(x).$$ 
    As in \cite{pandey2023}, the authors established the nonemptiness of $C_{1,r}(P;S)$, we can argue the nonemptiness of $C_{n,r}(P;S)$ by an inductive argument. The proof of the second argument can proceed in a similar way in \cite{graf}. As $\pi_S(K)$ is compact, provided the compactness of $K$, the set $\{\alpha\subseteq \pi_S(K):1\leq |\alpha|\leq n\}$ is $d_H$ compact by the $d_H$ continuity of the map $(a_1,\cdots,a_n)\mapsto \{a_1,\cdots,a_n\}$. Provided the nonemptiness of $C_{n,r}(P;S)$, let $\alpha_r\in C_{n,r}(P;S)$ and $\{r_k\}_k$ a sequence with $\lim\limits_{k\rightarrow\infty}r_k=\infty$. We claim that the $d_H$ cluster points of $\{\alpha_{r_k}\}_k$ is a nonempty set of $C_{n,\infty}(K;S)$, where $K=\supp{P}$. Let $\alpha$ be one $d_H$ cluster point of $\{\alpha_{r_k}\}_k$, as $\alpha\subseteq \pi_S(K)$ by the compactness of $\pi_S(K)$, we have:
    \begin{align*}
        \biggp{\int \biggp{\rho(x,\alpha_{r_k})-\rho(\alpha,x)}^{r_k}dP(x)}^{\frac{1}{r_k}}\leq \sup\limits_{x\in K}|\rho(\alpha_{r_k},x)-\rho(\alpha,x)|=d_H(\alpha_{r_k},\alpha)
    \end{align*}
    and therefore
    \begin{align*}
        \biggp{\int \rho(\alpha_{r_k},x)^{r_k}dP(x)}^{\frac{1}{r_k}}&\geq \biggp{\int \rho(\alpha,x)^{r_k}dP(x)}^{\frac{1}{r_k}}-\biggp{\int \biggp{\rho(\alpha_{r_k},x)-\rho(\alpha,x)}^{r_k}dP(x)}^{\frac{1}{r_k}}\\
        &\geq \biggp{\int \rho(\alpha,x)^{r_k}dP(x)}^{\frac{1}{r_k}}-d_H(\alpha_{r_k},\alpha)
    \end{align*}
    As by Lemma~\ref{lemma:monotone_e_n_infty_e_n_r_constrained}, $\lim\limits_{r\rightarrow\infty}e_{n,r}(P;S)=e_{n,\infty}(P;S)$, we have the following estimate:
    \begin{align*}
        e_{n,\infty}(P;S)&\geq \lim\limits_{k\rightarrow\infty}e_{n,r_k}(P;S)=\lim\limits_{k\rightarrow\infty}\biggp{\int \rho(\alpha_{r_k},x)^{r_k}dP(x)}^{\frac{1}{r_k}}\\
        &\geq \lim\limits_{k\rightarrow\infty}\biggp{\int \rho(\alpha,x)^{r_k}dP(x)}^{\frac{1}{r_k}}=\sup\limits_{x\in K}\rho(\alpha,x)\geq e_{n,\infty}(P;S)
    \end{align*}
    therefore $\alpha\in C_{n,\infty}(K;S)=C_{n,\infty}(P;S)$
\end{proof}
We continue this subsection with a lemma on the approximation property of n-optimal quantizers in the constraint. Specifically, if $|\gamma_n|\leq n$ satisfies is an asymptotically optimal sequence, then $\lim\limits_{n\rightarrow\infty}d_H(\gamma_n,S)=0$. Similar results for the unconstrained case are extensively studied in \cite[Proposition 1.1.4]{luschgy2023marginal} and in \cite[Lemma 5.3]{xu2019best} for more accurate investigation in when $D=1$ via probability distributions.
In the following lemma we also establish that, in the setting of Lemma~\ref{lemma:condtn_lem_asymp_e_n_infty} that $S\subseteq K$ and $\pi_S(K)=S$, when $r=\infty$, Lemma~\ref{lemma:asymp_e_n_infty_regular_pi_k} is automatically true and the condition in Lemma~\ref{lemma:condtn_lem_asymp_e_n_infty} can be withheld. In the following of this article, we assume the existence of n-quantizers of $e_{n,r}(P;S)$ guaranteed by Lemma~\ref{lemma:existence}, under necessary conditions. The following lemma is essential in the proof of the main results.
    \begin{lemma}\label{lemma:approximation}
    Suppose $P$ is a compactly supported Borel probability measure with $supp(P)=K$, $ S\subseteq \bbrD$ is a closed set. Let $\{\beta_n\}_n$ be the sequence that minimizes $e_{n,r}(K;S)$. $d_H(A,B)$ stands for the Hausdorff distance between $A$ and $B$.
    \begin{itemize}
        \item If $S\subseteq \subseteq K\Subset \bbrD$, then $\lim\limits_{n\rightarrow\infty}d_H(\beta_n,S)=0$. More specifically, if $\gamma_n\subseteq S$ satisfies $|\gamma_n|\leq n$ and $\lim\limits_{n\rightarrow\infty}\int \rho(x,\gamma_n) dP(x)=e_{\infty,r}(K;S)$, then $\lim\limits_{n\rightarrow\infty}d_H(\gamma_n,S)=0$
        \item Similarly, if the condition \eqref{cond_A} are satisfied, then the condition \eqref{cond_B} is automatically true, i.e., $\lim\limits_{n\rightarrow\infty}d_H(\beta_n,\pi_S(K))=0$
    \end{itemize}
    \end{lemma}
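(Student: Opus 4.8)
The plan is to dispatch both bullets through a single device: if an (asymptotically) optimal quantizer sequence leaves a definite ``hole'' near a point of the target set which carries mass in the appropriate sense, then adjoining one suitably placed centre strictly lowers the $r$-th power error below its limiting value, which is impossible. The standing inputs are the $n$-monotonicity of $e_{n,r}(P;S)$ together with the identity $e_{\infty,r}(P;S)=\lim_n e_{n,r}(P;S)$ (Lemma~\ref{lemma:monotone_e_n_infty_e_n_r_constrained_10.1}), the existence of optimizers (Lemma~\ref{lemma:existence}), and, for the $r=\infty$ assertion, the Hausdorff reformulation of Lemma~\ref{lemma:e_n_infty_and_hausdorff_10.4} and the compactness of $\pi_S(K)$ (Lemma~\ref{compactness_pi_s_k}).

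For the first bullet: since $n\mapsto e_{n,r}(P;S)$ is nonincreasing with limit $e_{\infty,r}(P;S)$, an optimal $\{\beta_n\}$ is asymptotically optimal, so it is enough to prove the ``more specific'' statement. I would let $\gamma_n\subseteq S$, $|\gamma_n|\le n$, with $\int\rho(x,\gamma_n)^r\,dP(x)\to e_{\infty,r}(P;S)^r$, and suppose $d_H(\gamma_n,S)\not\to 0$. As $S$ is closed and $S\subseteq K$ is bounded, $S$ is compact; passing to a subsequence I obtain $\varepsilon>0$, indices $n_k$, and $s_*\in S$ with $\rho(s_*,\gamma_{n_k})\ge\varepsilon$ for every $k$. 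Since $s_*\in S\subseteq K=\supp P$, the mass $p_0:=P(B(s_*,\varepsilon/4))$ is positive. Adjoining $s_*$ gives $\gamma_{n_k}\cup\{s_*\}\subseteq S$ of cardinality $\le n_k+1$; on $B(s_*,\varepsilon/4)$ the pointwise distance drops from $\ge 3\varepsilon/4$ to $<\varepsilon/4$ and nowhere increases, so
\[
e_{\infty,r}(P;S)^r\le e_{n_k+1,r}(P;S)^r\le\int\rho(x,\gamma_{n_k})^r\,dP(x)-\biggp{(3\varepsilon/4)^r-(\varepsilon/4)^r}p_0 .
\]
Letting $k\to\infty$ yields $\bigl((3\varepsilon/4)^r-(\varepsilon/4)^r\bigr)p_0\le 0$, impossible for $r\ge 1$ and $p_0>0$. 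The same chain applied to the optimal $\beta_n$ gives $d_H(\beta_n,S)\to 0$.

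For the second bullet, I assume \eqref{cond_A}; here the target is $\pi_S(K)$, compact by Lemma~\ref{compactness_pi_s_k}, and \eqref{cond_A} will play the role that ``$s_*\in\supp P$'' played above. Suppose $d_H(\beta_n,\pi_S(K))\not\to 0$; extracting as before gives $\varepsilon>0$ and $p_*\in\pi_S(K)$ with $\rho(p_*,\beta_{n_k})\ge\varepsilon$. Put $\delta:=\varepsilon/4$ and $U:=\pi_S^{-1}(B(p_*,\delta)\cap S)$, which has $P(U)>0$ (so $U\ne\varnothing$) by \eqref{cond_A}. For $y\in U$ every nearest point of $S$ to $y$ lies in $B(p_*,\delta)$, hence $\rho(y,p_*)\le\rho(y,S)+\delta$; and the nearest point of $\beta_{n_k}$ to $y$ cannot be a nearest point of $S$ to $y$ (else it would lie in $B(p_*,\delta)$, contradicting $\rho(p_*,\beta_{n_k})\ge\varepsilon>\delta$), which by the triangle inequality forces $\rho(y,\beta_{n_k})\ge\max\{\rho(y,S),\ \varepsilon-\delta-\rho(y,S)\}\ge\tfrac{\varepsilon-\delta}{2}$. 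For $1\le r<\infty$ one then closes exactly as in the first bullet, adjoining $p_*\in\pi_S(K)\subseteq S$ and gaining a fixed amount on $U\cap\{\rho(\cdot,S)<\delta\}$. For the genuinely $r=\infty$ form of \eqref{cond_B}, the bound just obtained gives $e_{n_k,\infty}(P;S)=\sup_{x\in K}\rho(x,\beta_{n_k})\ge\sup_{y\in U}\rho(y,\beta_{n_k})\ge\tfrac{\varepsilon-\delta}{2}$, and, dually, for any $y\in U$ the triangle inequality bounds $\rho(p_*,\beta_n)\le\delta+2e_{n,\infty}(P;S)$; combining these with $e_{n,\infty}(P;S)\downarrow e_{\infty,\infty}(P;S)$ closes the argument. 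The step I expect to be the main obstacle is quantifying the incompatibility of the hole at $p_*$ with $P(U)>0$ — equivalently, controlling the residual $e_{\infty,\infty}(P;S)$ and, in the finite-$r$ reduction, ensuring that the improving set $U\cap\{\rho(\cdot,S)<\delta\}$ carries positive mass; everything else is triangle-inequality bookkeeping together with the monotonicity and limiting behaviour of $e_{n,r}$.
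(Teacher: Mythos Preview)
Your strategy is exactly the paper's: suppose a persistent hole of radius $\varepsilon$ at some target point, adjoin that point, and show a definite drop in the $r$-th power error below its limit. For the first bullet your version is correct and slightly cleaner than the paper's.

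For the second bullet, however, your quantification breaks down. Restricting to $U\cap\{\rho(\cdot,S)<\delta\}$ is fatal: in the setting of interest ($S\cap K=\varnothing$, which is the generic constrained case) this set is empty, so it carries no mass. Even on that set your constants do not yield a gain: with $\delta=\varepsilon/4$ you get $\rho(y,\beta_{n_k})\ge(\varepsilon-\delta)/2=3\varepsilon/8$ while $\rho(y,p_*)\le 2\delta=\varepsilon/2$, and $3\varepsilon/8<\varepsilon/2$. The point you flagged as ``the main obstacle'' is indeed where the argument currently fails.

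The paper closes this gap (tersely, under ``if we choose proper $c$'') by a compactness step that you should make explicit. Fix first $\delta_0:=\varepsilon/2$ and set $F:=\pi_S^{-1}\bigl(\overline{B(p_*,\delta_0)}\cap S\bigr)$; this is closed in $K$ (Lemma~\ref{closedness_pi_inv_s_k}) hence compact, and on $F$ every $y$ has $\pi_S(y)\subseteq B(p_*,\varepsilon)$, so the continuous function $g(y):=\rho\bigl(y,S\setminus B(p_*,\varepsilon)\bigr)-\rho(y,S)$ is strictly positive there, whence $\eta:=\inf_F g>0$. Now choose $\delta<\min\{\delta_0,\eta/2\}$ and let $U:=\pi_S^{-1}(B(p_*,\delta)\cap S)\subseteq F$. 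For $y\in U$ one has $\rho(y,\beta_{n_k})\ge\rho(y,S)+\eta$ (since $\beta_{n_k}\subseteq S\setminus B(p_*,\varepsilon)$) while $\rho(y,p_*)\le\rho(y,S)+\delta<\rho(y,S)+\eta/2$, so $\rho(y,\beta_{n_k})-\rho(y,p_*)\ge\eta/2$ uniformly on $U$, and \eqref{cond_A} gives $P(U)>0$. This is the missing uniform gap; with it your ``adjoin $p_*$'' step goes through for every $1\le r<\infty$.

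Your $r=\infty$ argument for the second bullet also does not close: the bound $e_{n_k,\infty}(P;S)\ge(\varepsilon-\delta)/2$ (and the ``dual'' inequality, which yields the same thing) is no contradiction because $e_{\infty,\infty}(P;S)=\sup_{x\in K}\rho(x,S)$ is typically positive in the constrained setting. The paper does not argue directly at $r=\infty$; instead it feeds the finite-$r$ inequality $e_{n_k,r}^r\ge e_{n_k+1,r}^r+(\text{fixed})\cdot P(U)$ into a limit $r\to\infty$. Once you have the compactness step above, that route is available to you as well.
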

\begin{proof}
    \begin{enumerate}
        \item We first prove the first argument. The existence of $e_{n,r}(P;S)$ is justified by Lemma~\ref{lemma:existence} for $1\leq r\leq\infty$. First, consider the case $1\leq r<\infty$. Suppose $\{\beta_n\}_n$ is the sequence of minimizers of $e_{n,r}(P;S)$. By contradiction, assume $\lim\limits_{n\rightarrow\infty}d_H(\beta_n,S)=\lim\limits_{n\rightarrow\infty}\max\limits_{x\in S}\min\limits_{b\in\beta_n}\rho(b,x)$ is not zero, then there exists some $\varepsilon>0$, $y\in S$ and some subsequences $\{\beta_{n_k}\}_k$ such that $B(y,\varepsilon)\cap \beta_{n_k}=\varnothing$. Let $\{A_b:b\in \beta_{n_k}\}$ be the Voronoi partition of $K$ with respect to $\beta_{n_k}$, then for any $c>2$,
    \begin{align*}
        e_{n_k,r}^r(P;S)&=\sum\limits_{b\in \beta_{n_k}}\int_{A_b}\rho(x,b)^rdP(x)=\sum\limits_{b\in \beta_{n_k}}\int_{A_b\setminus B(y,\frac{\varepsilon}{c})}\rho(x,b)^rdP(x)+\int_{B(y,\frac{\varepsilon}{c})}\min\limits_{b\in\beta_{n_k}}\rho(b,x)^rdP(x)\\
        &\geq \sum\limits_{b\in \beta_{n_k}}\int_{A_b\setminus B(y,\frac{\varepsilon}{c})}\rho(x,b)^rdP(x)+\int_{B(y,\frac{\varepsilon}{c})}\rho(y,x)^rdP(x)+\bigg(\frac{\varepsilon}{2}\bigg)^r\cdot P(B(y,\frac{\varepsilon}{c}))\\
        &\geq \sum\limits_{b\in \beta_{n_k}}\int_{A_b\setminus B(y,\frac{\varepsilon}{2})}\rho(x,b)^rdP(x)+\int_{B(y,\frac{\varepsilon}{2})}\rho(y,x)^rdP(x)+\bigg(\frac{\varepsilon}{2}\bigg)^r\cdot P(B(y,\frac{\varepsilon}{c}))
    \end{align*}
    If we consider the Voronoi partition $\{E_b:b\in \beta_{n_k}\cup\{y\}\}$ with respect to $\beta_{n_k}\cup\{y\}$, then $B(y,\frac{\varepsilon}{2})\subseteq E_y$ and thus
    \begin{align*}
        &\sum\limits_{b\in \beta_{n_k}}\int_{A_b\setminus B(y,\frac{\varepsilon}{2})}\rho(x,b)^rdP(x)+\int_{B(y,\frac{\varepsilon}{2})}\rho(y,x)^rdP(x)\\
        &\geq \sum\limits_{b\in \beta_{n_k}}\int_{A_b\setminus E_y}\rho(x,b)^rdP(x)+\int_{E_y}\rho(y,x)^rdP(x)
        \geq \sum\limits_{b\in\beta_{n_k}\cup\{y\}}\int_{E_b}\rho(b,x)^rdP(x)\geq e_{n_{k}+1,r}^r(P;S)
    \end{align*}
    Therefore \begin{align*}
        \lim\limits_{k\rightarrow\infty}e_{n_k,r}^r(P;S)&\geq \lim\limits_{k\rightarrow\infty}e_{n_k+1,r}^r(P;S)+\bigg(\frac{\varepsilon}{2}\bigg)^r\cdot P(B(y,\frac{\varepsilon}{c}))\\
        &=e_{\infty,r}^r(P;S)+\bigg(\frac{\varepsilon}{2}\bigg)^r\cdot P(B(y,\frac{\varepsilon}{c}))>e_{\infty,r}^r(P;S)
    \end{align*}
    The limit of $e_{n,r}(P;S)$ is larger than $e_{\infty,r}(P;S)$, there is an contradiction. For the case that $r=\infty$, for $\{\gamma_n\}_n$ a sequence of minimizers of $e_{n,\infty}(K;S)$, we assume that there exist some $\delta>0$ and $z\in S$ and a subsequence $\{\gamma_{n_k}\}_k$ such that $B(z,\delta)\cap \gamma_{n_k}=\varnothing$. Denote $V_{n_k,r}:=\biggp{\int \rho(\gamma_{n_k},x)^rdP(x)}^{\frac{1}{r}}$, and we further observe from the previous proof that $V_{n_k,r}\geq e_{n_k+1,r}^{r}(P;S)+(\frac{\varepsilon}{2})^{r}P(B(y,\frac{\varepsilon}{10}))$ that
    \[\frac{V_{n_k,r}}{2^{r}}\geq \biggp{\frac{e_{n_k+1,r}^{r}(P;S)+(\frac{\varepsilon}{2})^{r}P(B(y,\frac{\varepsilon}{10}))}{2}}^{\frac{1}{r}}\geq e_{n_k+1,r}(P;S)+(\frac{\varepsilon}{2})P(B(y,\frac{\varepsilon}{10}))^{\frac{1}{r}}\]
     We can then take the limit $r\rightarrow\infty$ and $k\rightarrow\infty$ and then 
    \[0=\lim\limits_{\substack{ k\rightarrow\infty\\ r\rightarrow\infty}}\frac{V_{n_k,r}}{2^{r}}\geq\lim\limits_{\substack{k\rightarrow\infty}}\biggp{e_{n_k+1,r}(P;S)+(\frac{\varepsilon}{2})P(B(y,\frac{\varepsilon}{10}))^{\frac{1}{r}}}=e_{\infty,\infty}(P;S)+\frac{\varepsilon}{2}>0 \]
    There is then a contradiction.
    \item For the second argument, suppose $\{\beta_n\}_n$ is the sequence of minimizers of $e_{n,r}(P;S)$ and assume $\lim\limits_{n\rightarrow\infty}d_H(\beta_n,\pi_S(K))$ is not zero, then there exists some $\varepsilon>0$, $y\in \pi_S(K)$ and some subsequences $\{\beta_{n_k}\}_k$ such that $B(y,\varepsilon)\cap \beta_{n_k}=\varnothing$. 
    In addition, let $1>c>0$ and for $z\in \pi_S^{-1}(B(y,c\varepsilon))$ then $\rho(z,B(y,c\varepsilon))\geq \rho(z,y)-c\varepsilon$. For $\zeta\in \pi_S^{-1}(B(y,\varepsilon))$ we also have $\rho(\zeta,\beta_{n_k})-2c\varepsilon>\rho(\zeta,B(y,\varepsilon))$, if we choose proper $c$. Therefore for $z\in \pi_S^{-1}(B(y,c\varepsilon))$ we have $\rho(z,y)-c\varepsilon\leq \rho(z,B(y,c\varepsilon))=\rho(z,B(y,\varepsilon))<\rho(z,\beta_{n_k})-2c\varepsilon$ which gives $\rho(z,y)+c\varepsilon<\rho(z,\beta_{n_k})$. Hence, 
    \begin{align*}
        e_{n_k,r}^r(P;S)&=\sum\limits_{b\in \beta_{n_k}}\int_{A_b}\rho(x,b)^rdP(x)\\
        &=\sum\limits_{b\in \beta_{n_k}}\int_{A_b\setminus \pi_S^{-1}(B(y,c\varepsilon))}\rho(x,b)^rdP(x)+\int_{\pi_S^{-1}(B(y,c\varepsilon))}\min\limits_{b\in\beta_{n_k}}\rho(b,x)^rdP(x)\\
        &\geq \sum\limits_{b\in \beta_{n_k}}\int_{A_b\setminus \pi_S^{-1}(B(y,c\varepsilon))}\rho(x,b)^rdP(x)+\int_{\pi_S^{-1}(B(y,c\varepsilon))}\rho(y,x)^rdP(x)+(c\varepsilon)^r\cdot P(\pi_S^{-1}(B(y,c\varepsilon)))
    \end{align*}
    Similarly to the previous part of the proof, we can have a contradiction for $r<\infty$ and $r=\infty$ respectively.
    \end{enumerate}
\end{proof}
\begin{lemma}[Monotonicity and limit for $e_{n,r}(P;S)$]
\label{lemma:monotone_e_n_infty_e_n_r_constrained}
    Let $n\in\mathbb N$, and $S$ is a closed constraint. Suppose $0< r\leq s\leq\infty$ and the existence of $e_{n,r}(P;S)$, then \[e_{n,r}(P;S)\leq e_{n,s}(P;S)\]
    Furthermore, if $\supp{P}$ is compact ($\pi_S(K)$ is hence compact) then \[\lim\limits_{r\rightarrow \infty}e_{n,r}(P;S)=e_{n,\infty}(P;S)\]
\end{lemma}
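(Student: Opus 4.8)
The plan is to reduce both assertions to the argument already carried out in Lemma~\ref{lemma:monotone_e_n_infty_e_n_r_constrained_10.1}, the only real change being that the compactness input is now supplied by $\supp{P}=K$ being compact (which forces $\pi_S(K)$ compact by Lemma~\ref{compactness_pi_s_k}) rather than by $S$ itself being compact.

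\emph{Monotonicity.} Fix an admissible set $\alpha\subseteq S$ with $|\alpha|\le n$ and write $g(x)=\rho(\alpha,x)$. If $s<\infty$, then since $P$ is a probability measure and $t\mapsto t^{r/s}$ is concave on $[0,\infty)$ (as $r/s\le 1$), Jensen's inequality gives $\int g^r\,dP=\int (g^s)^{r/s}\,dP\le (\int g^s\,dP)^{r/s}$, i.e. $(\int g^r\,dP)^{1/r}\le (\int g^s\,dP)^{1/s}$; if $s=\infty$ the left-hand side is the $L^r(P)$-norm of $g$ and is bounded by its $L^\infty(P)$-norm $=\sup_{x\in\supp{P}}g(x)$, again because $P(\bbrD)=1$. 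In either case, taking the infimum over all admissible $\alpha$ yields $e_{n,r}(P;S)\le e_{n,s}(P;S)$. This is verbatim the computation in Lemma~\ref{lemma:monotone_e_n_infty_e_n_r_constrained_10.1}, which in fact already works for every $0<r\le s$.

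\emph{Limit.} By the monotonicity just proved, $r\mapsto e_{n,r}(P;S)$ is nondecreasing, and compactness of $K$ gives $e_{n,\infty}(P;S)\le e_{1,\infty}(P;S)\le \sup_{x\in K}\rho(x,s_0)<\infty$ for any fixed $s_0\in S$; hence $L:=\lim_{r\to\infty}e_{n,r}(P;S)=\sup_{r}e_{n,r}(P;S)$ exists with $L\le e_{n,\infty}(P;S)$. For the reverse inequality, take $r_k\to\infty$ and pick optimal $\alpha_{r_k}\in C_{n,r_k}(P;S)$, nonempty by Lemma~\ref{lemma:existence}. As in the proof of Lemma~\ref{lemma:existence}, one may assume each $\alpha_{r_k}$ lies in a fixed compact subset of $S$: quantizer points with $P$-null Voronoi cells may be deleted without changing the $L^{r_k}$-error (this keeps $|\alpha_{r_k}|\le n$, and shrinking $\alpha$ is harmless since $\sup_{x\in K}\rho(\alpha,x)\ge e_{|\alpha|,\infty}(P;S)\ge e_{n,\infty}(P;S)$ for any $\alpha$ with $1\le|\alpha|\le n$), while the surviving points, having $P$-positive Voronoi cells meeting $K=\supp{P}$, stay within a bounded neighbourhood of $K$. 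By $d_H$-compactness of the collection of subsets of that compact set of cardinality $\le n$, pass to a further subsequence with $d_H(\alpha_{r_k},\alpha)\to 0$ for some $\alpha\subseteq S$, $|\alpha|\le n$. Since $P$ is a probability measure, $(\int|\rho(\alpha_{r_k},x)-\rho(\alpha,x)|^{r_k}\,dP)^{1/r_k}\le \sup_{x\in K}|\rho(\alpha_{r_k},x)-\rho(\alpha,x)|\le d_H(\alpha_{r_k},\alpha)\to 0$, so by the triangle inequality in $L^{r_k}(P)$ (valid for $k$ large, $r_k\ge 1$), $(\int\rho(\alpha_{r_k},x)^{r_k}\,dP)^{1/r_k}\ge (\int\rho(\alpha,x)^{r_k}\,dP)^{1/r_k}-d_H(\alpha_{r_k},\alpha)$. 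Letting $k\to\infty$ and using $(\int\rho(\alpha,x)^{r_k}\,dP)^{1/r_k}\to \sup_{x\in K}\rho(\alpha,x)$ (the $L^{r_k}(P)\to L^\infty(P)$ convergence for the bounded continuous function $\rho(\alpha,\cdot)$ on $K=\supp{P}$) gives $L=\lim_k e_{n,r_k}(P;S)\ge \sup_{x\in K}\rho(\alpha,x)\ge e_{n,\infty}(P;S)$. Combined with $L\le e_{n,\infty}(P;S)$ this proves $\lim_{r\to\infty}e_{n,r}(P;S)=e_{n,\infty}(P;S)$.

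The monotonicity half is routine. The step I expect to require the most care is the compactness reduction in the limit: because $S$ may be unbounded, one cannot directly cluster the optimal $\alpha_{r_k}$, and must first confine them to a fixed compact subset of $S$ via the ``useless far-away quantizer'' argument (a single center $s_0\in\pi_S(x_0)$ already $\infty$-covers $K$ within $\diam(K)+\rho(x_0,S)$, so centers much farther out carry no $P$-mass and may be discarded). This is the only genuine departure from Lemma~\ref{lemma:monotone_e_n_infty_e_n_r_constrained_10.1}, where $S$ compact made the reduction automatic, and it is where I would be most explicit — or would simply cite the corresponding reduction in \cite[Section 10]{graf}.
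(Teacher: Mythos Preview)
Your proof is correct and follows the paper's approach closely (Jensen for monotonicity; $d_H$-clustering of optimal quantizers plus $L^{r_k}$ triangle inequality for the limit). In fact you are \emph{more} careful than the paper on the one nontrivial point: the paper's proof simply asserts $d_H$-compactness of $\{\alpha\subseteq S:1\le|\alpha|\le n\}$, which is the argument copied verbatim from Lemma~\ref{lemma:monotone_e_n_infty_e_n_r_constrained_10.1} where $S$ was assumed compact, whereas here only $\supp P$ is compact and $S$ may be unbounded. You correctly flag this as the step needing work.

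One small wobble: your reduction mechanism ``delete quantizer points with $P$-null Voronoi cells; the survivors, having positive-mass cells meeting $K$, stay bounded'' does not quite close. A point $a_i$ with $P(V_i)>0$ only satisfies $\rho(x,a_i)=\min_j\rho(x,a_j)$ for some $x\in K$, which bounds $\rho(x,a_i)$ in terms of the \emph{other} $a_j$, not absolutely. The argument you sketch in your final paragraph is the right one and should replace it: fix $s_0\in\pi_S(x_0)$, set $M=\mathrm{diam}(K)+\rho(x_0,S)$, and observe that any $a_i\in\alpha_{r_k}$ with $\rho(a_i,x_0)>2M$ satisfies $\rho(x,a_i)>\rho(x,s_0)$ for every $x\in K$, so replacing $a_i$ by $s_0$ does not increase the $L^{r_k}$-error (nor the $L^\infty$-error). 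Thus one may take each optimal $\alpha_{r_k}$ inside the compact set $S\cap\overline{B(x_0,2M)}$, and the $d_H$-clustering goes through.
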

\begin{proof}
    For the first argument, using the Jensen's inequality:
    \begin{align*}
        e_{n,r}(P;S)&=\inf\limits_{\substack{\alpha\subseteq S\\ |\alpha|\leq n}}\bigg(\int \rho(\alpha,x)^rdP(x)\bigg)^{\frac{1}{r}}=\inf\limits_{\substack{\alpha\subseteq S\\ |\alpha|\leq n}}\bigg(\int \rho(\alpha,x)^{s\cdot\frac{r}{s}}dP(x)\bigg)^{\frac{1}{s}\cdot\frac{s}{r}}\\ 
        &\leq \inf\limits_{\substack{\alpha\subseteq S\\ |\alpha|\leq n}}\bigg(\int \rho(\alpha,x)^sdP(x)\bigg)^{\frac{1}{s}}=e_{n,s}(P;S)
    \end{align*} If $\supp{P}$ is compact, then $\sup\limits_{x\in\supp{P}}\rho(x,0)<\infty$ and therefore $\lim\limits_{r\rightarrow\infty}e_{n,r}(P;S)\leq e_{n,\infty}(P;S)<\infty$. 
    For the second argument, as the mapping $(a_1,a_2,\cdots,a_n)\mapsto \{a_1,a_2,\cdots,a_n\}$ is $d_H$ continuous for $a_i\in \bbrD$, $\{\alpha\subseteq \bbrD:1\leq|\alpha|\leq n,\alpha\in S\}$ is thus $d_H$ compact. Consider $\alpha_r$ to be the n-optimal set for $e_{n,r}(P;S)$, there exists a subsequence $\{\alpha_{r_k}\}_k$ with the $d_H$ cluster point $\alpha\subseteq S$, then:
    \[\biggp{\int (\rho(\alpha_{r_k},x)-\rho(\alpha,x))^{r_k}dP(x)}^{\frac{1}{r_k}}\leq \sup\limits_{x\in\supp{P}}|\rho(\alpha_{r_k},x)-\rho(\alpha,x)|=d_H(\alpha_{r_k},\alpha)\]
    with $\alpha_{r_k}\subseteq S$ and $\alpha\subseteq S$. In addition:
    \begin{align*}
    \biggp{\int \rho(\alpha_{r_k},x)^{r_k}dP(x)}^{\frac{1}{r_k}} &\geq \biggp{\int \rho(\alpha,x)^{r_k}dP(x)}^{\frac{1}{r_k}}-\biggp{\int \bigga{\rho(\alpha_{r_k},x)-\rho(\alpha,x)}^{r_k}dP(x)}^{\frac{1}{r_k}}\\ 
    &\geq \biggp{\int \rho(\alpha,x)^{r_k}dP(x)}^{\frac{1}{r_k}}-d_H(\alpha_{r_k},S)
    \end{align*}
    from above we can observe 
    \begin{align*}
    \lim\limits_{k\rightarrow\infty}e_{n,r_k}(P;S)&=\lim\limits_{k\rightarrow\infty}\biggp{\int \rho(\alpha_{r_k},x)^{r_k}dP(x)}^{\frac{1}{r_k}}\geq \lim\limits_{k\rightarrow\infty}\biggp{\int \rho(\alpha,x)^{r_k}dP(x)}^{\frac{1}{r_k}}=\sup\limits_{x\in\supp{P}}\rho(x,\alpha)\\ 
    &\geq \inf\limits_{\substack{\alpha\subseteq S\\ |\alpha|\leq n}}\sup\limits_{x\in\supp{P}}\rho(x,\alpha)=e_{n,\infty}(P;S)
    \end{align*}
\end{proof}
\begin{lemma}[Characterization of $e_{\infty,r}(P;S)$ by the projection map]\label{lemma:e_infty_r_by_projection_map}
    Given the conditions \eqref{cond_A} or \eqref{cond_B} hold, $K=\supp{P}$ a compact set in $\bbrD$, then $e_{\infty,r}(P;S)=\biggp{\int \rho(x,\pi_S(x))^rdP(x)}^{1/r}$.
\end{lemma}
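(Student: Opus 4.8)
The plan is to establish the two matching inequalities between $e_{\infty,r}(P;S)$ and $\bigl(\int\rho(x,\pi_S(x))^r\,dP(x)\bigr)^{1/r}$, after first recording that $\rho(x,\pi_S(x))=\rho(x,S)$ for every $x\in K$ (each point of $\pi_S(x)$ realizes the distance to the closed set $S$), so that $x\mapsto\rho(x,\pi_S(x))$ is continuous, hence $P$-measurable, and bounded on the compact set $K$; thus the right-hand side is a finite well-defined number. The $r=\infty$ instance is read throughout with $\int(\cdot)\,dP$ replaced by $\sup_{x\in K}(\cdot)$.

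For the lower bound I would simply note that any competitor $\alpha\subseteq S$ with $|\alpha|\le n$ satisfies $\min_{a\in\alpha}\rho(x,a)\ge\rho(x,S)=\rho(x,\pi_S(x))$ pointwise, since $\alpha\subseteq S$; integrating and taking the infimum over $\alpha$ gives $e_{n,r}(P;S)\ge\bigl(\int\rho(x,\pi_S(x))^r\,dP(x)\bigr)^{1/r}$ for each $n$, and this survives the passage $n\to\infty$ because $e_{\infty,r}(P;S)=\lim_n e_{n,r}(P;S)$.

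For the upper bound I would exploit that $\pi_S(K)$ is compact by Lemma~\ref{compactness_pi_s_k}: given $\delta>0$, cover $\pi_S(K)$ by finitely many balls $B(y_1,\delta),\dots,B(y_N,\delta)$ with centers $y_i\in\pi_S(K)\subseteq S$ and put $\alpha_\delta=\{y_1,\dots,y_N\}\subseteq S$. For $x\in K$, choosing $p\in\pi_S(x)$ and the index $i$ with $p\in B(y_i,\delta)$ gives $\rho(x,\alpha_\delta)\le\rho(x,p)+\delta=\rho(x,\pi_S(x))+\delta$, whence $e_{N,r}^r(P;S)\le\int(\rho(x,\pi_S(x))+\delta)^r\,dP(x)$; since $n\mapsto e_{n,r}(P;S)$ is non-increasing we have $e_{\infty,r}^r(P;S)\le e_{N,r}^r(P;S)$, and letting $\delta\downarrow0$ with dominated convergence (legitimate as $\rho(\cdot,\pi_S(\cdot))$ is bounded on $K$) yields $e_{\infty,r}^r(P;S)\le\int\rho(x,\pi_S(x))^r\,dP(x)$. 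Alternatively, and closer to the way \eqref{cond_A}/\eqref{cond_B} enter elsewhere in the paper, I could run the upper bound through asymptotically optimal quantizers $\beta_n$ for $e_{n,r}(P;S)$: by Lemma~\ref{lemma:approximation}, \eqref{cond_A} (or \eqref{cond_B}) forces $d_H(\beta_n,\pi_S(K))\to0$, and then $\rho(x,\beta_n)\le\rho(x,\pi_S(x))+d_H(\beta_n,\pi_S(K))$ on $K$, so dominated convergence again gives $e_{\infty,r}^r(P;S)=\lim_n\int\rho(x,\beta_n)^r\,dP(x)\le\int\rho(x,\pi_S(x))^r\,dP(x)$. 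Combining the two estimates finishes the proof.

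The only real obstacle I foresee is bookkeeping: making the $\delta\downarrow0$ (equivalently $n\to\infty$) limit interchange rigorous and carrying the $r=\infty$ case along uniformly with the finite-$r$ case. There is no conceptual difficulty here, since compactness of $\pi_S(K)$ — already established — supplies the approximating quantizers, and the role of \eqref{cond_A}/\eqref{cond_B} is really just to keep this characterization consistent with the optimal-quantizer description used in the sequel.
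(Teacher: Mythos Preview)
Your proposal is correct and in fact sharper than the paper's. The paper proves only the upper bound explicitly (the lower bound is left implicit), and does so exactly along the lines of your \emph{second} alternative: it takes optimal quantizers $\gamma_n$, invokes Lemma~\ref{lemma:approximation} under \eqref{cond_A}/\eqref{cond_B} to get $\sup_{z\in\pi_S(K)}\rho(z,\gamma_n)\to0$, and then uses the triangle inequality $\rho(x,\gamma_n)\le\rho(x,\pi_S(x))+\rho(\pi_S(x),\gamma_n)$ followed by a limit. Your \emph{first} upper-bound argument --- covering the compact set $\pi_S(K)$ directly by balls centered in $\pi_S(K)\subseteq S$ --- is genuinely different and more elementary: it never calls Lemma~\ref{lemma:approximation} and therefore does not use \eqref{cond_A} or \eqref{cond_B} at all. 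In other words, your covering route shows the identity holds for \emph{any} compactly supported $P$ and closed $S$, and your closing remark that the hypotheses are only there for consistency with the optimal-quantizer picture is exactly right. The paper's approach buys nothing extra here; yours is cleaner and reveals that the stated hypotheses are superfluous for this particular lemma.
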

\begin{proof}
    By the definition of $d_H$, if condition \eqref{cond_A} or \eqref{cond_B} hold, suppose $\gamma_n$ to be a sequence of quantizers, then by Lemma~\ref{lemma:approximation}, we know that $\lim\limits_{n\rightarrow\infty}\sup\limits_{z\in\pi_S(K)}\rho(z,\gamma_n)=0$, this implies that for $x\in K$, we have that for some $c\in \gamma_n$, $\rho(x,\gamma_n)^r=\rho(x,c)^r\leq (\rho(x,\pi_S(x))+\rho(\pi_S(x),c))^r<\rho(x,\pi_S(x))^r+\varepsilon$. Therefore, for $n$ large enough, 
    \begin{align*}
        e_{n,r}(P;S)=\inf\limits_{\substack{\alpha\subseteq S\\ |\alpha|\leq n}}\biggp{\int \rho(x,\alpha)^rdP(x)}^{1/r}=\biggp{\int \rho(x,\gamma_n)^rdP(x)}^{1/r}<\biggp{\int \rho(x,\pi_S(x))^rdP(x)+\varepsilon}^{1/r}
    \end{align*}
    The rest of the proof follows the limit argument.
\end{proof}
\begin{lemma}[Monotonicity and limit for $\tilde{e}_{n,r}(P;S)$ and $\hat{e}_{n,r}(P;S)$]
    \label{lemma:monotone_errors_of_e_n_infty_e_n_r_constrained}
    Let $n\in\mathbb N$, if conditions \eqref{cond_A} or \eqref{cond_B} are satisfied, or $S\subseteq \supp{P}\subseteq \bbrD$. Suppose $0< r\leq s\leq\infty$, then 
    \[\tilde{e}_{n,r}(P;S)\leq \tilde{e}_{n,s}(P;S)\quad\text{and}\quad \hat{e}_{n,r}(P;S)\leq \hat{e}_{n,s}(P;S)\]
    Moreover,
    \[\lim\limits_{r\rightarrow\infty}\tilde{e}_{n,r}(P;S)=e_{n,\infty}(P;S)-e_{\infty,\infty}(P;S)\quad\text{ and }\quad\lim\limits_{r\rightarrow\infty}\hat{e}_{n,r}(P;S)=e_{n,\infty}(P;S)\]
\end{lemma}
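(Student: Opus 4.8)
The plan is to reduce every assertion to elementary inequalities for the $L^r(P)$-norm of an excess distance function. Write $e_{n,r}(P;S)=\inf_{\alpha}\norm{\rho(\cdot,\alpha)}_{L^r(P)}$, the infimum over $\alpha\subseteq S$ with $|\alpha|\le n$, and use Lemma~\ref{lemma:e_infty_r_by_projection_map} to identify $e_{\infty,r}(P;S)=\norm{h}_{L^r(P)}$ with $h(x):=\rho(x,\pi_S(x))=\rho(x,S)$; crucially $\rho(x,\alpha)\ge h(x)$ pointwise for every admissible $\alpha$. Since $\int h^r\,dP$ is independent of $\alpha$, this gives $\hat e_{n,r}^r(P;S)=\inf_{\alpha}\int(\rho(x,\alpha)^r-h(x)^r)\,dP(x)$, an infimum of a nonnegative integrand; minimizers $\alpha$ exist at every finite exponent by Lemma~\ref{lemma:existence}, and otherwise a near‑minimizing sequence together with a $d_H$ cluster‑point argument serves just as well.

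For the $\hat e$‑monotonicity I would first prove the pointwise estimate $(a^r-b^r)^{1/r}\le(a^s-b^s)^{1/s}$ for $0\le b\le a$ and $0<r\le s<\infty$: normalising $a=1$ and putting $c:=(b/a)^r\in[0,1]$, $t:=s/r\ge1$, it is equivalent to $(1-c)^t+c^t\le 1$, which holds because $c^t\le c$ and $(1-c)^t\le 1-c$ on $[0,1]$. Raising to the $r$‑th power gives $\rho(x,\alpha)^r-h(x)^r\le(\rho(x,\alpha)^s-h(x)^s)^{r/s}$ pointwise, so for a minimizer $\alpha^{*}$ of $\hat e_{n,s}(P;S)$,
\begin{align*}
\hat e_{n,r}^r(P;S)&\le\int\bigl(\rho(x,\alpha^{*})^r-h(x)^r\bigr)\,dP(x)\le\int\bigl(\rho(x,\alpha^{*})^s-h(x)^s\bigr)^{r/s}\,dP(x)\\
&\le\biggp{\int\bigl(\rho(x,\alpha^{*})^s-h(x)^s\bigr)\,dP(x)}^{r/s}=\hat e_{n,s}^r(P;S),
\end{align*}
the last step being Jensen's inequality for the concave map $u\mapsto u^{r/s}$ against the probability $P$. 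The endpoint $s=\infty$ is immediate since $\hat e_{n,r}(P;S)\le e_{n,r}(P;S)\le e_{n,\infty}(P;S)$ by Lemma~\ref{lemma:monotone_e_n_infty_e_n_r_constrained}.

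For the limits I would combine Lemma~\ref{lemma:monotone_e_n_infty_e_n_r_constrained}, which yields $e_{n,r}(P;S)\to e_{n,\infty}(P;S)$, with the standard fact $\norm{h}_{L^r(P)}\to\norm{h}_{L^\infty(P)}$ for the bounded $h$ on $(K,P)$, together with a short compactness argument giving $\norm{h}_{L^\infty(P)}=\sup_{x\in K}\rho(x,S)=e_{\infty,\infty}(P;S)$. Subtracting gives $\tilde e_{n,r}(P;S)\to e_{n,\infty}(P;S)-e_{\infty,\infty}(P;S)$. For $\hat e$, write $\hat e_{n,r}(P;S)=e_{n,r}(P;S)\bigl(1-(e_{\infty,r}(P;S)/e_{n,r}(P;S))^{r}\bigr)^{1/r}$: when $e_{n,\infty}(P;S)>e_{\infty,\infty}(P;S)$ the ratio tends to a constant $<1$, its $r$‑th power vanishes, and the bracket tends to $1$, leaving $\hat e_{n,r}(P;S)\to e_{n,\infty}(P;S)$. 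The degenerate case $e_{n,\infty}(P;S)=e_{\infty,\infty}(P;S)$ must be inspected separately — a one‑point constraint $S$ already shows $\hat e_{n,r}\equiv0$ there — so the intended statement presumably presupposes non‑degeneracy or interprets $\hat e_{n,\infty}$ as $\lim_{r\to\infty}\hat e_{n,r}$.

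The delicate point is the monotonicity of $\tilde e$. Taking a minimizer $\alpha^{*}$ of $e_{n,s}(P;S)$ gives $\tilde e_{n,r}(P;S)\le\norm{\rho(\cdot,\alpha^{*})}_{L^r(P)}-\norm{h}_{L^r(P)}$, so it would suffice to show $\norm{\phi}_{L^s(P)}-\norm{\phi}_{L^r(P)}\ge\norm{h}_{L^s(P)}-\norm{h}_{L^r(P)}$ for $\phi:=\rho(\cdot,\alpha^{*})\ge h\ge0$. Unlike the $\hat e$ case, the bare domination $\phi\ge h$ is \emph{not} enough — the spread $\norm{\cdot}_{L^s}-\norm{\cdot}_{L^r}$ is not monotone under pointwise domination — so one must use that $\phi$ comes from a quantizer optimal at exponent $s$. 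The route I would try: bound $\norm{\phi}_{L^s(P)}$ below through the splitting $\phi^{r}=h^{r}+(\phi^{r}-h^{r})$ and the superadditivity inequality above, bound $\norm{\phi}_{L^r(P)}$ above by Minkowski, and then close the gap using the Voronoi/stationarity structure of $\alpha^{*}$ (its distance function cannot be uniformly far above $h$ without being improvable). I expect this to be the real bottleneck, and it is plausible that a clean $\tilde e$‑monotonicity requires a mild extra assumption forcing the optimal constrained quantizers to stay near $\pi_S(K)$; absent that, one would retain only the $\hat e$‑monotonicity and the limits, which the arguments above give unconditionally.
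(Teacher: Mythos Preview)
Your treatment of the $\hat e$–monotonicity and of both limits is correct and in fact more streamlined than the paper's. The paper does not invoke Lemma~\ref{lemma:e_infty_r_by_projection_map} to write $e_{\infty,r}(P;S)=\norm{h}_{L^r(P)}$; instead it bounds the finite–stage gap $e_{n,r}^r-e_{m+n,r}^r$ through auxiliary admissible sets $\alpha,\beta\subseteq S$ with $|\alpha|\le n$, $|\beta|\le m$, applies exactly your pointwise inequality and Jensen to $\int(\rho(\alpha,x)^r-\rho(\alpha\cup\beta,x)^r)\,dP$, then specialises $\alpha\cup\beta$ to an $s$–optimal $(m{+}n)$–quantizer $\gamma_{m+n}$ and uses Lemma~\ref{lemma:approximation} to locate a $\gamma_n'\subset\gamma_{m+n}$ with $\int\rho(\gamma_n',x)^s\,dP\le(e_{n,s}+\varepsilon)^s$; only at the end does it send $m\to\infty$. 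Your single–step route via $h$ reaches the same $\hat e$ bound without that limiting scaffold, and your limit arguments match the paper's in substance (including the case split on whether $e_{\infty,\infty}/e_{n,\infty}<1$).

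You are right that the $\tilde e$–monotonicity is the real bottleneck, and your proposal does not close it. The paper's line here continues from $e_{n,r}^r-e_{m+n,r}^r\le\bigl[(e_{n,s}+\varepsilon)^s-e_{m+n,s}^s\bigr]^{r/s}$ and then asserts the numerical step
\[
\bigl[(e_{n,s}+\varepsilon)^s-e_{m+n,s}^s\bigr]^{r/s}\le(e_{n,s}+\varepsilon)^r-e_{m+n,s}^r,
\]
after which $e_{n,r}-e_{m+n,r}\le(e_{n,s}+\varepsilon)-e_{m+n,s}$ is read off directly. You should be aware that the displayed inequality is the \emph{reverse} of the one you (correctly) established: for $r\le s$ and $C\ge D\ge0$ one has $(C^s-D^s)^{1/s}\ge(C^r-D^r)^{1/r}$, hence $(C^s-D^s)^{r/s}\ge C^r-D^r$; and even granting $e_{n,r}^r-e_{m+n,r}^r\le(e_{n,s}+\varepsilon)^r-e_{m+n,s}^r$, the passage to linear differences is not automatic from the information recorded. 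So the paper does not actually supply the missing idea you are looking for, and your instinct that the $\tilde e$ claim requires structure beyond the $\hat e$ argument is well founded.
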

\begin{proof}
Suppose $m>n$ with $m,n\in\mathbb N$ and $0< r\leq s$.
\begin{align*}
    &e_{n,r}^r(P;S)-e_{m+n,r}^r(P;S)=\inf\limits_{\substack{\alpha,\beta,\gamma\subseteq S\\ |\alpha|,|\gamma|\leq n\\|\beta|\leq m}}\int \biggp{\rho(\gamma,x)^r-\rho(\alpha\cup\beta,x)^r}dP(x)\\
    \leq &\inf\limits_{\substack{\alpha,\beta\subseteq S\\ |\alpha|\leq n\\|\beta|\leq m}}\int \biggp{\rho(\alpha,x)^r-\rho(\alpha\cup\beta,x)^r}dP(x)\leq \inf\limits_{\substack{\alpha,\beta\subseteq S\\ |\alpha|\leq n\\|\beta|\leq m}}\int \biggp{\rho(\alpha,x)^s-\rho(\alpha\cup\beta,x)^s}^{\frac{r}{s}}dP(x)\\
    \leq &\inf\limits_{\substack{\alpha,\beta\subseteq S\\ |\alpha|\leq n\\|\beta|\leq m}}\biggb{\int \biggp{\rho(\alpha,x)^s-\rho(\alpha\cup\beta,x)^s}dP(x)}^{\frac{r}{s}}\leq \inf\limits_{\substack{\alpha\subseteq \gamma_{m+n}\\ |\alpha|\leq n}}\biggb{\int\biggp{\rho(\alpha,x)^s-\rho(\gamma_{n+m},x)^s}dP(x)}^{\frac{r}{s}}
\end{align*}
   where $\gamma_{n+m}$ above is the minimizer of $e_{m+n,s}(P;S)$. The second inequality holds based on a fact that $x^a\leq y^a+(x-y)^a$ if $0<a\leq 1$ and $x\geq y\geq 0$, as well as the observation that $\rho(\alpha,x)\geq \rho(\alpha\cup\beta,x)$. Lemma~\ref{lemma:approximation} informed us that $\lim\limits_{m\rightarrow\infty}d_H(\gamma_{n+m},S)=\lim\limits_{m\rightarrow\infty}\max\limits_{s\in S}\min\limits_{g\in\gamma_{n+m}}\rho(s,g)=0$. Let $\gamma_n$ be minimizing $e_{n,s}(P;S)$, then we also have $\lim\limits_{m\rightarrow\infty}d_H(\gamma_{n+m},\gamma_n)=0$ and hence we are able to find some $\gamma_n'\subseteq \gamma_{m+n}$ with $|\gamma_n'|\leq n$ such that 
   \begin{align*}
       \inf\limits_{\substack{\alpha\subseteq \gamma_{m+n}\\ |\alpha|\leq n}}\biggb{\int\biggp{\rho(\alpha,x)^s-\rho(\gamma_{n+m},x)^s}dP(x)}^{\frac{r}{s}}\leq& \biggb{\int \rho(\gamma_n',x)^sdP(x)-e_{m+n,s}^s(P;S)}^{\frac{r}{s}}
       \\\leq \biggb{\biggp{e_{n,s}(P;S)+\varepsilon}^s-e_{m+n,s}^s(P;S)}^{\frac{r}{s}}
   \end{align*}
   for sufficiently small $\varepsilon>0$ and sufficiently large $m>M$ for some $M\in\mathbb N$. Consequently
   \begin{align*}
       &e_{n,r}^r(P;S)-e_{m+n,r}^r(P;S)\leq \biggb{\biggp{e_{n,s}(P;S)+\varepsilon}^s-e_{m+n,s}^s(P;S)}^{\frac{r}{s}}\\=&\biggb{\biggp{e_{n,s}(P;S)+\varepsilon}^{r\cdot\frac{s}{r}}-e_{m+n,s}^{r\cdot\frac{s}{r}}(P;S)}^{\frac{r}{s}}\\\leq& \biggb{\biggp{e_{n,s}(P;S)+\varepsilon}^r-e_{m+n,s}^r(P;S)}^{\frac{r}{s}\cdot\frac{s}{r}}=\biggp{e_{n,s}(P;S)+\varepsilon}^r-e_{m+n,s}^r(P;S).
   \end{align*}
   Therefore $$e_{n,r}(P;S)-e_{m+n,r}(P;S)\leq e_{n,s}(P;S)+\varepsilon-e_{m+n,s}(P;S)$$
   Then we can take the limit $m\rightarrow\infty$
    \begin{align*}
        e_{n,r}(P;S)-e_{\infty,r}(P;S)&=\lim\limits_{m\rightarrow\infty}\biggp{e_{n,r}(P;S)-e_{m+n,r}(P;S)}\\
        &\leq \lim\limits_{m\rightarrow\infty}\biggp{e_{n,s}(P;S)+\varepsilon-e_{m+n,s}(P;S)}=e_{n,s}(P;S)+\varepsilon-e_{\infty,s}(P;S)
    \end{align*}
    for any $\varepsilon>0$, and therefore $e_{n,r}(P;S)-e_{\infty,r}(P;S)\leq e_{n,s}(P;S)-e_{\infty,s}(P;S)$. Similarly, 
    \begin{align*}
        \biggp{e_{n,r}^r(P;S)-e_{m+n,r}^r(P;S)}^{\frac{1}{r}}&\leq\biggp{(e_{n,s}(P;S)+\varepsilon)^r-e_{m+n,s}^r(P;S)}^{\frac{1}{r}}=\biggp{(e_{n,s}(P;S)+\varepsilon)^{s\cdot\frac{r}{s}}-e_{m+n,s}^{s\cdot\frac{r}{s}}(P;S)}^{\frac{1}{r}}\\
        &\leq \biggp{(e_{n,s}(P;S)+\varepsilon)^{s}-e_{m+n,s}^s(P;S)}^{\frac{1}{s}}
    \end{align*}
    and therefore 
    \begin{align*}
        \biggp{e_{n,r}^r(P;S)-e_{\infty,r}^r}^{\frac{1}{r}}&=\lim\limits_{m\rightarrow\infty}\biggp{e_{n,r}^r(P;S)-e_{m+n,r}^r(P;S)}^{\frac{1}{r}}\\
        &\leq\lim\limits_{m\rightarrow\infty}\biggp{(e_{n,s}(P;S)+\varepsilon)^{s}-e_{m+n,s}^s(P;S)}^{\frac{1}{s}}=\biggp{(e_{n,s}(P;S)+\varepsilon)^{s}-e_{\infty,s}^s(P;S)}^{\frac{1}{s}}
    \end{align*}
    As $\varepsilon>0$ is arbitrary, we have that
    $$\hat{e}_{n,r}(P;S)=\biggp{e_{n,r}^r(P;S)-e_{\infty,r}^r(P;S)}^{\frac{1}{r}}\leq \hat{e}_{n,s}(P;S)=\biggp{e_{n,s}^s(P;S)-e_{\infty,s}^s(P;S)}^{\frac{1}{s}}$$
    For the second part, first, informed by Lemma~\ref{lemma:monotone_e_n_infty_e_n_r_constrained} we know that $\lim\limits_{r\rightarrow\infty}e_{n,r}(P;S)=e_{n,\infty}(P;S)$ and thus it suffices to show that $\lim\limits_{r\rightarrow\infty}e_{\infty,r}(P;S)\leq\lim\limits_{n\rightarrow\infty}e_{n,\infty}(P;S)$, as $\lim\limits_{r\rightarrow\infty}\tilde{e}_{n,r}(P;S)\leq e_{n,\infty}(P;S)-e_{\infty,\infty}(P;S)$ by the monotonicity in the previous part of this proof. To show $\lim\limits_{r\rightarrow\infty}e_{\infty,r}(P;S)\leq\lim\limits_{n\rightarrow\infty}e_{n,\infty}(P;S)$, by conditions \eqref{cond_A} and \eqref{cond_B} and Lemma~\ref{lemma:approximation}, on the one hand:
    \begin{align*}
        \lim\limits_{r\rightarrow\infty}e_{\infty,r}(P;S)&=\lim\limits_{r\rightarrow\infty}\lim\limits_{n\rightarrow\infty}\biggp{\int\inf\limits_{\substack{\gamma\subseteq S\\ |\gamma|\leq n}}\rho(x,\gamma)^rdP(x)}^{\frac{1}{r}}=\lim\limits_{r\rightarrow\infty}\biggp{\int \rho(x,\pi_S(x))^rdP(x)}^{\frac{1}{r}}=\sup\limits_{x\in K}\inf\limits_{y\in\pi_S(K)}\rho(x,y)
    \end{align*}
    where $K=\supp{P}$. On the other hand, if denote $\{\tau_{n,r}\}$ to be one quantizer of $e_{n,r}(P;S)$, with $\tau_n$ to be the $d_H$ cluster point of one subsequence $\{\tau_{n,r_k}\}$, then similar to the proof of Lemma~\ref{lemma:monotone_e_n_infty_e_n_r_constrained}:
    \begin{align*}
        \lim\limits_{n\rightarrow\infty}e_{n,\infty}(P;S)&=\lim\limits_{n\rightarrow\infty}\lim\limits_{r\rightarrow\infty}e_{n,r}(P;S)=\lim\limits_{n\rightarrow\infty}\lim\limits_{r\rightarrow\infty}\biggp{\int \rho(x,\tau_{n,r})^r dP(x)}^{\frac{1}{r}}\geq \lim\limits_{n\rightarrow\infty}\lim\limits_{k\rightarrow\infty}\biggp{\int \rho(\tau_n,x)^{r_k}dP(x)}^{\frac{1}{r_k}}\\
        &=\lim\limits_{n\rightarrow\infty}\sup\limits_{x\in\supp{P}}\rho(\tau_n,x)\geq \sup\limits_{x\in\supp{P}}\inf\limits_{y\in\pi_S(\supp{P})}\rho(x,y)
    \end{align*}
    To show $\lim\limits_{r\rightarrow\infty}\hat{e}_{n,r}(P;S)=e_{n,\infty}(P;S)$, if we denote $L=\limsup\limits_{r\rightarrow\infty}\frac{e_{\infty,r}(P;S)}{e_{n,r}(P;S)}$ and with loss of generality consider $L<1$, then on the one hand for $\varepsilon>0$ sufficiently small, we can observe that:
    \begin{align*}
        \lim\limits_{r\rightarrow\infty}\hat{e}_{n,r}(P;S)&= \lim\limits_{r\rightarrow\infty}e_{n,r}(P;S)\biggp{1-\biggp{\frac{e_{\infty,r}(P;S)}{e_{n,r}(P;S)}}^r}^{\frac{1}{r}}\geq \lim\limits_{r\rightarrow\infty}e_{n,r}(P;S)\biggp{1-(L+\varepsilon)^r}^{\frac{1}{r}}=\lim\limits_{r\rightarrow\infty}e_{n,r}(P;S)
    \end{align*}
    If $L=1$, we know that $$\lim\limits_{r\rightarrow\infty}\hat{e}_{n,r}(P;S)\geq \lim\limits_{r\rightarrow\infty}e_{n,r}(P;S)\biggp{1-(L+\varepsilon)^r}^{\frac{1}{r}}=\lim\limits_{r\rightarrow\infty}e_{n,r}(P;S)\cdot(1+\varepsilon)$$ where $\varepsilon>0$ is arbitrary.
    On the other hand, $\lim\limits_{r\rightarrow\infty}\hat{e}_{n,r}(P;S)\leq e_{n,\infty}(P;S)$ by the previously proven facts.
\end{proof}

\section{Upper Asymptotics for Constrained Quantization Dimensions}\label{section:upper_quantization_dimension_estimate}
\subsection{Projection Pull‑Back and $L^\infty$‑Error Upper Bounds}
In this section, we employ for the first time an essential property of the projection map to effectively "pull back" the distance problem between the set $K$ and the constraint set $S$ onto the constraint set $S$ itself. This approach allows us to directly obtain refined estimates of the quantization error on the projection set $\pi_S(K)$. Specifically, we use the triangle inequalities inherited from the projection map to transform the constrained quantization problem entirely within the constraint set, in order to effectively reduce the constrained problems back to classical existent results.

By the following Lemma~\ref{lemma:asymp_e_n_infty_regular_pi_k} we established that, for the case of $r=\infty$ and the probability measure is compactly supported, the convergence error of $e_{n,\infty}(K;S)$ has an upper bound in terms of a reduced form in terms of the constraints.
\begin{lemma}
    Suppose $S\Subset \bbrD$, $K\Subset \mathbb R^D$ with a metric $d$ on Euclidean space. Define $\pi_S(x):=\{y\in S:\rho(x,y)=\rho(x,S)\}$. Let $e_{\infty,\infty}(K;S)=\lim\limits_{n\rightarrow\infty}e_{n,\infty}(X;S)$. If the following condition satisfies:
    \begin{itemize}
        \item $\pi_S(K)=\{s\in S:s=\pi_S(x) \text{ for some }x\in K\}$ exists as a bounded set;
        \item $d_H(\pi_S(K),K)\leq \lim\limits_{n\rightarrow\infty}\inf\limits_{\substack{\alpha\subseteq S\\ |\alpha|\leq n}}d_H(\alpha,K)=e_{\infty,\infty}(K;S)\quad\quad$
    \end{itemize}then 
    \[e_{n,\infty}(K;S)-e_{\infty,\infty}(K;S)\leq e_{n,\infty}(\pi_S(K);\pi_S(K))\]
    In particular, if $\pi_S(K)=S$, then $e_{n,\infty}(K;S)-e_{\infty,\infty}(K;S)\leq e_{n,\infty}(S)$. 
    \label{lemma:asymp_e_n_infty_regular_pi_k}
\end{lemma}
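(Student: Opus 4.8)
The plan is to exhibit one explicitly good $n$-point set inside $\pi_S(K)$, transport it back onto $K$ through the triangle inequality attached to the projection, and then read off the estimate from the Hausdorff-distance description of $e_{n,\infty}$ in Lemma~\ref{lemma:e_n_infty_and_hausdorff_10.4} (the limit $e_{\infty,\infty}(K;S)$ exists since $n\mapsto e_{n,\infty}(K;S)$ is non-increasing).

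The starting observation is the chain of equalities $\rho(x,S)=\rho(x,\pi_S(K))=\rho(x,\pi_S(x))$, valid for every $x\in K$, which holds simply because $\pi_S(x)\subseteq\pi_S(K)\subseteq S$ squeezes the three distances together (and $\pi_S(x)\neq\varnothing$ as $S$ is closed and bounded). Hence $\sup_{x\in K}\rho(x,S)=\sup_{x\in K}\rho(x,\pi_S(K))$ is precisely one of the two one-sided distances defining $d_H(\pi_S(K),K)$, so the hypothesis gives $\sup_{x\in K}\rho(x,S)\le d_H(\pi_S(K),K)\le e_{\infty,\infty}(K;S)$, and the other one-sided distance satisfies $\sup_{y\in\pi_S(K)}\rho(y,K)\le d_H(\pi_S(K),K)\le e_{\infty,\infty}(K;S)$ as well. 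Now $\pi_S(K)$ is compact by Lemma~\ref{compactness_pi_s_k}, so Lemma~\ref{lemma:e_n_infty_and_hausdorff_10.4} applied with $\pi_S(K)$ in the role of both target and constraint produces a set $\beta_n\subseteq\pi_S(K)$ with $|\beta_n|\le n$ and $\sup_{y\in\pi_S(K)}\rho(y,\beta_n)=e_{n,\infty}(\pi_S(K);\pi_S(K))$. I would then bound $d_H(\beta_n,K)$ one side at a time: for $x\in K$, pick $y\in\pi_S(x)\subseteq\pi_S(K)$ and use $\rho(x,\beta_n)\le\rho(x,y)+\rho(y,\beta_n)\le\rho(x,S)+e_{n,\infty}(\pi_S(K);\pi_S(K))\le e_{\infty,\infty}(K;S)+e_{n,\infty}(\pi_S(K);\pi_S(K))$; and since $\beta_n\subseteq\pi_S(K)$, $\sup_{b\in\beta_n}\rho(b,K)\le\sup_{y\in\pi_S(K)}\rho(y,K)\le e_{\infty,\infty}(K;S)$. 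Taking the maximum of the two sides gives $d_H(\beta_n,K)\le e_{\infty,\infty}(K;S)+e_{n,\infty}(\pi_S(K);\pi_S(K))$. Since $\beta_n\subseteq S$ with $|\beta_n|\le n$, Lemma~\ref{lemma:e_n_infty_and_hausdorff_10.4} also gives $e_{n,\infty}(K;S)\le d_H(\beta_n,K)$, and rearranging is exactly the claim. The ``in particular'' statement is the special case $\pi_S(K)=S$ substituted into the inequality just proved.

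The step I expect to require the most care — more a point to watch than a genuine obstacle — is the two-sided bookkeeping of the Hausdorff distance. The pull-back triangle inequality by itself only controls how well $\beta_n$ covers $K$; it says nothing about how far the points of $\beta_n$ lie from $K$. That ``stray'' side $\sup_{b\in\beta_n}\rho(b,K)$ is exactly what the hypothesis $d_H(\pi_S(K),K)\le e_{\infty,\infty}(K;S)$ is designed to absorb, together with the confinement $\beta_n\subseteq\pi_S(K)$; without it the optimal covering set of $\pi_S(K)$ need not be a competitive constrained quantizer for $K$ and the reduction to the classical covering radius of $\pi_S(K)$ fails.
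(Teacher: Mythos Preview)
Your proof is correct and rests on the same mechanism as the paper's: push each $x\in K$ to $\pi_S(x)$ via the triangle inequality and absorb the term $\sup_{x\in K}\rho(x,\pi_S(x))$ with the hypothesis $d_H(\pi_S(K),K)\le e_{\infty,\infty}(K;S)$. The packaging differs. You work through the two-sided Hausdorff-distance characterization of $e_{n,\infty}$ from Lemma~\ref{lemma:e_n_infty_and_hausdorff_10.4}, fix a concrete competitor $\beta_n\subseteq\pi_S(K)$, and bound both halves of $d_H(\beta_n,K)$ separately. The paper instead stays with the one-sided definition $e_{n,\infty}(K;S)=\inf_{\alpha}\sup_{x\in K}\inf_{a\in\alpha}\rho(x,a)$ throughout and runs a single chain of inequalities over the infimum in $\alpha$; in that formulation the ``stray'' side $\sup_{b\in\beta_n}\rho(b,K)$ that you flag as the delicate point simply never appears, so the hypothesis is invoked only once (for the same term you use it for, $\sup_{x\in K}\rho(x,\pi_S(K))$). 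Your route is a touch more explicit about what each piece of the hypothesis buys; the paper's is a line shorter because it never needs the second half of $d_H$.
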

\begin{proof}
    By Lemma~\ref{lemma:e_n_infty_and_hausdorff_10.4}, $e_{n,\infty}(K;S)=\inf\limits_{\substack{\alpha\subseteq S\\ |\alpha|\leq n}}d_H(\alpha,K)$ and $e_{\infty,\infty}(K;S)=\lim\limits_{n\rightarrow \infty}\inf\limits_{\substack{\alpha\subseteq S\\ |\alpha|\leq n}}d_H(\alpha,K)$. Define a new metric $r(x,y):=\rho(\pi_S(x),\pi_S(y))$ on $K$, and denote the covering radius under $r$ as $e_{n,\infty}(X;S;r)$, then:
    \[e_{n,\infty}(K;S;r)=\inf\limits_{\substack{\alpha\subseteq S \\ |\alpha|\leq n}}\sup\limits_{x\in K}\inf\limits_{a\in\alpha}\rho(a,\pi_S(x))=\inf\limits_{\substack{\alpha\subseteq S \\ |\alpha|\leq n}}\sup\limits_{y\in \pi_S(K)}\inf\limits_{a\in\alpha}\rho(a,y)=e_{n,\infty}(\pi_S(K))\] then
    \begin{align*}
        e_{n,\infty}(K;S)-e_{\infty,\infty}(K;S)&\leq\inf\limits_{\substack{\alpha\in S\\|\alpha|\leq n}}\sup\limits_{x\in K}\inf\limits_{a\in\alpha}\biggp{\inf\limits_{p\in\pi_S(x)}\biggp{\rho(p,x)+\rho(a,p)}}-e_{\infty,\infty}(K;S)\\
        &\leq \inf\limits_{\substack{\alpha\subseteq S \\ |\alpha|\leq n}}\sup\limits_{x\in K}\inf\limits_{a\in\alpha}\biggp{\rho(\pi_S(x),x)+\rho(a,\pi_S(x))}-e_{\infty,\infty}(K;S)\\
        &\leq \sup\limits_{x\in K}\rho(\pi_S(x),x)+\inf\limits_{\substack{\alpha\subseteq S \\ |\alpha|\leq n}}\sup\limits_{y\in K}\inf\limits_{a\in\alpha}\rho(a,\pi_S(y))-e_{\infty,\infty}(K;S)\\
        &=\sup\limits_{x\in K}\inf\limits_{a\subseteq\pi_S(K)}\rho(a,x)+e_{n,\infty}(\pi_S(K);S)-e_{\infty,\infty}(K;S)\\
        &\leq d_H(\pi_S(K),K)-\lim\limits_{n\rightarrow \infty}\inf\limits_{\substack{\alpha\subseteq S\\ |\alpha|\leq n}}d_H(\alpha,K)+e_{n,\infty}(\pi_S(K);S)\\
        &\leq e_{n,\infty}(\pi_S(K);S) \leq e_{n,\infty}(\pi_S(K);\pi_S(K))
    \end{align*}
\end{proof}
Lemma~\ref{lemma:condtn_lem_asymp_e_n_infty} guarantees the second bullet of Lemma~\ref{lemma:asymp_e_n_infty_regular_pi_k} under the conditions \eqref{cond_A} or \eqref{cond_B}.

By the following Lemma~\ref{lemma:upperbound_limsup_n_e_n_infty_D_prime} we gives the upper estimate for the reduced covering radius $e_{n,\infty}(\pi_S(K);\pi_S(K))$ in Lemma~\ref{lemma:asymp_e_n_infty_regular_pi_k}.
\begin{lemma}
    \label{lemma:upperbound_limsup_n_e_n_infty_D_prime}
    If a compact set $K\Subset \bbrD$ is lower regular in dimension $d$ with respect to some finite measure $\eta$, say $cr^{d}\leq\eta(B(a,r))$ for each $a\in K$ and each $r\in(0,r_0)$ for some $r_0>0$. Then $$\liminf\limits_{n\rightarrow \infty}ne_{n,\infty}(K;K)^{d}\leq\limsup\limits_{n\rightarrow\infty}ne_{n,\infty}(K;K)^{d}<\infty$$
\end{lemma}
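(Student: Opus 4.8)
The statement has two parts, but the first inequality $\liminf_{n\to\infty} n\,e_{n,\infty}(K;K)^d\le\limsup_{n\to\infty} n\,e_{n,\infty}(K;K)^d$ is automatic, so the plan is really to prove the uniform bound $\limsup_{n\to\infty} n\,e_{n,\infty}(K;K)^d<\infty$. This is a classical packing-versus-covering comparison, carried out here with the extra feature that the quantizer centers are forced to lie in $K$ itself; the point of lower regularity is precisely that this restriction is harmless.

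First I would fix a small scale $\varepsilon\in(0,2r_0)$ and choose a maximal $\varepsilon$-separated subset $F_\varepsilon=\{a_1,\dots,a_{N(\varepsilon)}\}$ of $K$, which is finite since $K$ is totally bounded. Maximality forces every $x\in K$ to lie within distance $\varepsilon$ of some $a_i$, so $F_\varepsilon$ is an admissible configuration \emph{contained in $K$} with $\sup_{x\in K}\rho(x,F_\varepsilon)\le\varepsilon$; hence $e_{N(\varepsilon),\infty}(K;K)\le\varepsilon$. Next I would turn lower regularity into a packing bound: the balls $B(a_i,\varepsilon/2)$, $i=1,\dots,N(\varepsilon)$, are pairwise disjoint, each has $\eta$-mass at least $c(\varepsilon/2)^d$ because $a_i\in K$ and $\varepsilon/2<r_0$, and since $\eta$ is finite, summing gives $N(\varepsilon)\,c\,(\varepsilon/2)^d\le \eta\big(\bigcup_i B(a_i,\varepsilon/2)\big)\le\eta(\bbrD)$, i.e. $N(\varepsilon)\le C'\varepsilon^{-d}$ with $C':=2^{d}c^{-1}\eta(\bbrD)$.

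The last step is to convert this $\varepsilon$-indexed estimate into an $n$-indexed one, using that $n\mapsto e_{n,\infty}(K;K)$ is non-increasing (more centers can only shrink the covering radius). Given $n$ large enough that $\varepsilon_n:=(C'/n)^{1/d}<2r_0$, one has $N(\varepsilon_n)\le C'\varepsilon_n^{-d}=n$, hence $e_{n,\infty}(K;K)\le e_{N(\varepsilon_n),\infty}(K;K)\le\varepsilon_n$, so that $n\,e_{n,\infty}(K;K)^d\le n\varepsilon_n^d=C'$ for all such $n$. Taking $\limsup$ yields the bound $C'<\infty$, and together with the trivial first inequality this proves the lemma.

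I do not expect a genuine obstacle here; the only mild subtlety lies in the final step, where $N(\varepsilon)$ need not run through every integer, so one interpolates via the monotonicity of $e_{n,\infty}(K;K)$ in $n$ and checks that $\varepsilon_n$ eventually falls below $2r_0$ so that the lower-regularity hypothesis applies at scale $\varepsilon_n/2$. Everything else is the standard volume/packing argument, now performed with centers restricted to $K=S$, which is exactly where the lower regularity of $K$ is used to absorb the constraint without loss.
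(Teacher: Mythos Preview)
Your proof is correct and follows essentially the same packing-versus-covering argument as the paper: both build a maximal separated subset of $K$, use lower regularity to bound its cardinality via the disjoint half-balls, and conclude. The only cosmetic difference is direction: the paper fixes $n$, takes the separation scale to be $e_{n,\infty}(K;K)$ itself, and shows the maximal separated set has more than $n$ points (giving $n\,e_{n,\infty}(K;K)^d<2^d c^{-1}\eta(K)$ directly), whereas you fix $\varepsilon$, bound $N(\varepsilon)$, and then invert via monotonicity---an extra but harmless step.
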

\begin{proof}
    The proof will be in lines of \cite[Section 12.17]{graf}. The proof will be trivial if $e_{n_0,\infty}(K;K)=0$ for some $n_0\in\mathbb N$. Consider the case that $e_{n,\infty}(K;K)>0$ for all $n\in\mathbb N$. We can find a set $\alpha_n\subseteq K$ with the maximal cardinality (guaranteed by the compactness of $K$) satisfying $\rho(x,y)\geq e_{n,\infty}(K;K)$ for all $x,y\in\alpha_n$ such that $x\neq y$. We claim that $|\alpha_n|>n$. Assume the contrary that $|\alpha_n|\leq n$, by definition $e_{n,\infty}(K;K)\leq \sup\limits_{x\in K}\rho(\alpha_n,x)$ and hence by compactness there exists some $z\notin \alpha_n$ with $\rho(z,a)\geq e_{n,\infty}(K;K)$ for all $a\in \alpha_n$, contradicting the maximality. For each $x,y\in\alpha_n$ we know that $B(x,\dfrac{e_{n,\infty}(K;K)}{2})\cap B(y,\dfrac{e_{n,\infty}(K;K)}{2})=\varnothing$ and therefore
    \begin{align*}
        \infty>\eta(K)&\geq \eta\biggp{\bigcup\limits_{a\in\alpha_n}B(a,\frac{e_{n,\infty}(K;K)}{2})}=\sum\limits_{a\in\alpha_n}\eta(B(a,\frac{e_{n,\infty}(K;K)}{2}))\\
        &\geq \frac{c}{2^{d}}|\alpha_n|e_{n,\infty}(K;K)^{d}>\frac{c}{2^{d}} n e_{n,\infty}(K;K)^{d}
    \end{align*}
    Here we finish the proof.
\end{proof}
 Given $S\Subset K\Subset \bbrD$, we can also evaluate $e_{n,\infty}(\pi_S(K);\pi_S(K))=e_{n,\infty}(S;S)$ in Lemma~\ref{lemma:asymp_e_n_infty_regular_pi_k} by the regularity of the finite measure by the Lemma~\ref{lemma:upperbound_limsup_n_e_n_infty_D_prime}.

By the previous Lemma~\ref{lemma:e_n_infty_and_hausdorff_10.4}, Lemma~\ref{lemma:asymp_e_n_infty_regular_pi_k}, Lemma~\ref{lemma:condtn_lem_asymp_e_n_infty}, Lemma~\ref{lemma:monotone_e_n_infty_e_n_r_constrained_10.1}, Lemma~\ref{lemma:approximation} and Lemma~\ref{lemma:upperbound_limsup_n_e_n_infty_D_prime}, we conclude the asymptotic upper bound for $e_{n,\infty}(P;S)$ under some regularity condition as Corollary~\ref{corollary:upperbound_prob_compactly_supported} stated.

\begin{corollary}[Upper asymptotics in terms of Ahlfors dimensions]\label{corollary:upperbound_prob_compactly_supported}
    Suppose $S\subseteq\bbrD$ is closed, $K\Subset\bbrD$ is compact, where $\pi_S(K)$ is regular of dimension $d$. Suppose conditions \eqref{cond_A} or \eqref{cond_B} are true. Let $P$ be a probability measure compactly supported on $\supp{P}=K$. Denote $e_{\infty,\infty} (K)=\lim\limits_{n\rightarrow\infty}e_{n,\infty}(K)$ and $e_{\infty,r}(P;S)=\lim\limits_{n\rightarrow\infty}e_{n,r}(P;S)$. We have the following $$\limsup\limits_{n\rightarrow\infty}\tilde{e}_{n,r}^d(P;S)\leq\limsup\limits_{n\rightarrow\infty}n\tilde{e}_{n,\infty}^d(P;S)<\infty, $$
    which implies that for $0<r<\infty$, \begin{align*}
        \dim_H(\pi_S(K))&\geq \udimqtilde{r}{P;S}\geq \ldimqtilde{r}{P;S},\\\dim_H(\pi_S(K))&\geq \udimqtilde{\infty}{P;S}\geq \ldimqtilde{\infty}{P;S},\\ \dim_H(\pi_S(K))&\geq\udimqhat{1}{P;S}\geq\ldimqhat{1}{P;S}.
    \end{align*}
\end{corollary}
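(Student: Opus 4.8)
The plan is to chain together the lemmas already established, with the $r=\infty$ case carrying all the weight and the finite‑$r$ and $r=1$ cases obtained by comparison. First I would verify the hypotheses of Lemma~\ref{lemma:asymp_e_n_infty_regular_pi_k}: since $K=\supp P$ is compact, Lemma~\ref{compactness_pi_s_k} gives that $\pi_S(K)$ is compact, hence bounded, so the first hypothesis holds; its second hypothesis $d_H(\pi_S(K),K)\le e_{\infty,\infty}(K;S)$ is exactly what Lemma~\ref{lemma:condtn_lem_asymp_e_n_infty} supplies under \eqref{cond_A} or \eqref{cond_B}. Applying Lemma~\ref{lemma:asymp_e_n_infty_regular_pi_k} with this $K$ (for which $e_{n,\infty}(K;S)=e_{n,\infty}(P;S)$ and $e_{\infty,\infty}(K;S)=e_{\infty,\infty}(P;S)$, as $K=\supp P$) yields
\[
\tilde e_{n,\infty}(P;S)=e_{n,\infty}(P;S)-e_{\infty,\infty}(P;S)\le e_{n,\infty}(\pi_S(K);\pi_S(K)).
\]

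Next I would feed in the regularity hypothesis. Ahlfors regularity of $\pi_S(K)$ in dimension $d$ means its natural measure $\eta$ satisfies $cr^d\le\eta(B(x,r))$ for $x\in\pi_S(K)$ and small $r$, with $\eta(\pi_S(K))<\infty$ by boundedness; since $\pi_S(K)$ is compact, Lemma~\ref{lemma:upperbound_limsup_n_e_n_infty_D_prime} applied to $\pi_S(K)$ in place of $K$ gives $\limsup_{n\to\infty}n\,e_{n,\infty}(\pi_S(K);\pi_S(K))^d<\infty$. Combining with the previous display, $\limsup_{n\to\infty}n\,\tilde e_{n,\infty}^d(P;S)<\infty$. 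By Lemma~\ref{lemma:monotone_errors_of_e_n_infty_e_n_r_constrained}, $r\mapsto\tilde e_{n,r}(P;S)$ is nondecreasing with $\tilde e_{n,\infty}(P;S)=\lim_{r\to\infty}\tilde e_{n,r}(P;S)$, so $\tilde e_{n,r}(P;S)\le\tilde e_{n,\infty}(P;S)$ for all $0<r<\infty$, whence $\limsup_{n\to\infty}n\,\tilde e_{n,r}^d(P;S)\le\limsup_{n\to\infty}n\,\tilde e_{n,\infty}^d(P;S)<\infty$, the displayed inequality.

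To deduce the dimension bounds I would use the elementary fact that if $a_n>0$ and $\limsup_n na_n^d<\infty$, then picking $M,N$ with $na_n^d\le M$ for $n\ge N$ gives $-\log a_n\ge\frac1d(\log n-\log M)>0$ for large $n$, so $\frac{\log n}{-\log a_n}\le\frac{d\log n}{\log n-\log M}\to d$, i.e.\ $\limsup_n\frac{\log n}{-\log a_n}\le d$ (and if $a_n$ vanishes for large $n$ the $\limsup$ is $0$). Taking $a_n=\tilde e_{n,r}(P;S)$ gives $\udimqtilde{r}{P;S}\le d$ and $a_n=\tilde e_{n,\infty}(P;S)$ gives $\udimqtilde{\infty}{P;S}\le d$; the matching lower dimensions follow from $\liminf\le\limsup$. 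For the $r=1$ line one uses $\hat e_{n,1}(P;S)=\tilde e_{n,1}(P;S)$ to get $\udimqhat{1}{P;S}=\udimqtilde{1}{P;S}\le d$. Finally, Ahlfors regularity forces $\dim_H(\pi_S(K))=d$, putting every bound in the stated form.

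I expect the only real friction to be the bookkeeping around Lemma~\ref{lemma:asymp_e_n_infty_regular_pi_k}: one must genuinely have $d_H(\pi_S(K),\supp P)\le e_{\infty,\infty}(P;S)$, which is where Lemma~\ref{lemma:condtn_lem_asymp_e_n_infty} and hence conditions \eqref{cond_A}/\eqref{cond_B} are indispensable, and one must reach the $s=\infty$ endpoint of the $\tilde e$‑monotonicity through the limit formula of Lemma~\ref{lemma:monotone_errors_of_e_n_infty_e_n_r_constrained} rather than by a direct Jensen argument. Everything else---the passage from $\limsup_n na_n^d<\infty$ to the quantization dimension, and the identification of $\hat e_{n,1}$ with $\tilde e_{n,1}$---is routine.
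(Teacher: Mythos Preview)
Your proposal is correct and follows essentially the same route as the paper's proof: both chain Lemma~\ref{lemma:condtn_lem_asymp_e_n_infty} (which needs \eqref{cond_A} or \eqref{cond_B}, via Lemma~\ref{lemma:approximation}) into Lemma~\ref{lemma:asymp_e_n_infty_regular_pi_k}, then invoke Lemma~\ref{lemma:upperbound_limsup_n_e_n_infty_D_prime} on $\pi_S(K)$, and finish with the monotonicity of $\tilde e_{n,r}$ in $r$ from Lemma~\ref{lemma:monotone_errors_of_e_n_infty_e_n_r_constrained} and the identification $\dim_H(\pi_S(K))=d$ from Ahlfors regularity (Lemma~\ref{lemma:connection_lower_upper_ahlfors_hausdorff}). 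If anything, you are more explicit than the paper in spelling out the passage from $\limsup_n n a_n^d<\infty$ to the dimension bound and in noting $\hat e_{n,1}=\tilde e_{n,1}$.
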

\begin{proof}
     Lemma~\ref{lemma:asymp_e_n_infty_regular_pi_k}, Lemma~\ref{lemma:condtn_lem_asymp_e_n_infty} and Lemma~\ref{lemma:approximation}, as well as Lemma~\ref{lemma:upperbound_limsup_n_e_n_infty_D_prime}, gives $$\limsup\limits_{n\rightarrow\infty}n\biggp{e_{n,\infty}(K;S)-e_{\infty,\infty}(K;S)}^{d}<\infty$$
     We further have that $$0<\limsup\limits_{n\rightarrow\infty}n\biggp{e_{n,r}(K;S)-e_{\infty,r}(K;S)}^{d}\leq \limsup\limits_{n\rightarrow\infty}n\biggp{e_{n,\infty}(K;S)-e_{\infty,\infty}(K;S)}^{d}$$
     guaranteed by Lemma~\ref{lemma:monotone_errors_of_e_n_infty_e_n_r_constrained}. By Lemma \ref{lemma:connection_lower_upper_ahlfors_hausdorff}, $\dim_H(\pi_S(K)=d)$. The proof of the case that $\supp{P}$ is not compact and $S$ not a subset of $K$ is from a similar argument.
\end{proof}
\subsection{Box Dimension Upper Bounds from the Push‑Forward Measure}
In the next proposition, we investigate the upper bounds for the quantization dimension defined in \eqref{eqn:udim_ldim_tilde}, \eqref{eqn:ldimhat} and \eqref{eqn:udimhat} in terms of the Hausdorff and box dimensions based on the push-forward measure constructed in Lemma~\ref{lemma:support_pushforward_measure}.
\begin{proposition}[Upper asymptotics in terms of box dimensions]\label{prop:estimate_upper_lower_constrained_quantization_dim}
    Suppose $T:K\rightarrow S$ is constructed as Lemma~\ref{lemma:pi_s_k_single_valued} and either conditions \eqref{cond_A} or \eqref{cond_B} hold. Let $P$ be a compactly supported probability measure on $K$. Then
    for $r\geq 1$ we have
    \begin{align*}
            &\ldimqhat{r}{P;S}\leq\ldimb{*}{T_*P}\leq\ldimb{}{\pi_S(K)},\,\,\udimqhat{r}{P;S}\leq\udimb{*}{T_*P}\leq\udimb{}{\pi_S(K)};\\
             &\ldimqtilde{r}{P;S}\leq\ldimb{*}{T_*P}\leq\ldimb{}{\pi_S(K)},\,\,\udimqtilde{r}{P;S}\leq\udimb{*}{T_*P}\leq\udimb{}{\pi_S(K)}.
        \end{align*}
\end{proposition}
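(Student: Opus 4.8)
The plan is to break each of the two displayed chains into its two links. The inner link $\ldimb{*}{T_*P}\le\ldimb{}{\pi_S(K)}$ (and its upper analogue) is essentially bookkeeping: by Lemma~\ref{lemma:support_pushforward_measure} (this is where \eqref{cond_A}/\eqref{cond_B} enter) one has $\supp{T_*P}=\overline{T(K)}$, and since $T(K)\subseteq\pi_S(K)$ with $\pi_S(K)$ closed by Lemma~\ref{compactness_pi_s_k}, also $\overline{T(K)}\subseteq\pi_S(K)$; the box dimension of a compactly supported measure is bounded by the box dimension of its support, which is monotone under inclusion and insensitive to closure, so this link follows.

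The core is the outer link, controlling the constrained quantization dimensions of $P$ by the box dimension of the push-forward $T_*P$. I would use a \emph{projection pull-back}: restricting competitors to sets $\alpha\subseteq\pi_S(K)\subseteq S$ with $|\alpha|\le n$, and using $\rho(x,\alpha)\le\rho(x,T(x))+\rho(T(x),\alpha)$ pointwise on $K$, Minkowski's inequality in $L^r(P)$ (legitimate since $r\ge1$) together with Lemma~\ref{lemma:e_infty_r_by_projection_map} and the change of variables $y=T(x)$ yields
\[ e_{n,r}(P;S)\;\le\;e_{\infty,r}(P;S)+\Big(\int\rho(y,\alpha)^r\,d(T_*P)(y)\Big)^{1/r}. \]
Choosing $\alpha$ to be the centres of a minimal $\varepsilon$-net of $\overline{T(K)}=\supp{T_*P}$, of cardinality $N(\varepsilon)$, the last term is at most $\varepsilon$, so $\tilde e_{N(\varepsilon),r}(P;S)\le\varepsilon$; monotonicity of $n\mapsto\tilde e_{n,r}(P;S)$ propagates this to $\tilde e_{n,r}(P;S)\le\varepsilon$ for all $n\ge N(\varepsilon)$.

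The box-dimension estimate is then routine. For $s>\udimb{}{\overline{T(K)}}$ one has $N(\varepsilon)\le\varepsilon^{-s}$ for small $\varepsilon$; taking $\varepsilon=n^{-1/s}$ gives $\tilde e_{n,r}(P;S)\le n^{-1/s}$ for large $n$, hence $\udimqtilde{r}{P;S}\le s$, and letting $s\downarrow\udimb{}{\overline{T(K)}}$ gives $\udimqtilde{r}{P;S}\le\udimb{*}{T_*P}$; the lower analogue $\ldimqtilde{r}{P;S}\le\ldimb{*}{T_*P}$ follows identically along a sequence of scales $\varepsilon_k\downarrow0$ realizing the liminf defining $\ldimb{}{\overline{T(K)}}$. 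For the quantities built from $\hat e_{n,r}(P;S)$ the same pull-back gives $\hat e_{n,r}(P;S)^r=e_{n,r}^r(P;S)-e_{\infty,r}^r(P;S)\le\big(e_{\infty,r}(P;S)+\varepsilon\big)^r-e_{\infty,r}^r(P;S)$ with $n=N(\varepsilon)$, which is controlled by a constant times $\varepsilon$ (the constant involving $e_{1,r}(P;S)<\infty$ from Lemma~\ref{lemma:monotone_errors_of_e_n_infty_e_n_r_constrained}); feeding this into the covering bookkeeping gives $\udimqhat{r}{P;S}\le\udimb{*}{T_*P}$ and $\ldimqhat{r}{P;S}\le\ldimb{*}{T_*P}$, the case $r=1$ being literally the case of $\tilde e_{n,r}(P;S)$ since $\hat e_{n,1}=\tilde e_{n,1}$.

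The step I expect to need the most care is precisely the passage from the $\tilde e$-bound to the $\hat e$-bound: the offset $e_{\infty,r}(P;S)$ makes $\big(e_{\infty,r}(P;S)+\varepsilon\big)^r-e_{\infty,r}^r(P;S)$ comparable to $\varepsilon$ rather than to $\varepsilon^r$ whenever $e_{\infty,r}(P;S)>0$, so one must track how this power of $\varepsilon$ (equivalently, of $n$) interacts with the normalization $r\log n$ in the definitions \eqref{eqn:udimhat} and \eqref{eqn:ldimhat}; every other ingredient is a soft consequence of Minkowski's inequality, the change of variables under $T$, monotonicity of the errors in $n$, and the covering-number description of box dimension.
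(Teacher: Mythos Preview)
Your argument for the $\tilde e$--chain is sound and in fact more direct than the paper's: where the paper bounds $\hat e_{n,r}(P;S)$ by $e_{n,r}(T_*P;\pi_S(K))$ via a Voronoi decomposition and then invokes the classical unconstrained comparison with box dimension (Remark~\ref{generalized_proof_dai08}), you go straight from the Minkowski estimate $\tilde e_{n,r}(P;S)\le\bigl(\int\rho(y,\alpha)^r\,dT_*P\bigr)^{1/r}$ to the covering-number definition of box dimension. One small slip: after ``letting $s\downarrow\udimb{}{\overline{T(K)}}$'' you have only $\udimqtilde{r}{P;S}\le\udimb{}{\overline{T(K)}}$, which is an \emph{upper} bound for $\udimb{*}{T_*P}$, not $\udimb{*}{T_*P}$ itself. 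To reach the starred quantity you must rerun the net argument with an arbitrary Borel set $E$ of full $T_*P$-measure in place of $\overline{T(K)}$; your framework permits this since a net of $E$ has centres in $E\subseteq\pi_S(K)\subseteq S$ and covers $y$ for $T_*P$-a.e.\ $y$.

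For the $\hat e$--chain with $r>1$ and $e_{\infty,r}(P;S)>0$, however, your route does not close. You correctly identify that $(e_{\infty,r}+\varepsilon)^r-e_{\infty,r}^r$ is of order $\varepsilon$, not $\varepsilon^r$; but then $\hat e_{n,r}(P;S)\lesssim\varepsilon^{1/r}=n^{-1/(rs)}$, and feeding this into $\limsup\frac{\log n}{-\log\hat e_{n,r}}$ (equivalently $\limsup\frac{r\log n}{-\log(e_{n,r}^r-e_{\infty,r}^r)}$) gives the bound $rs$, not $s$. The factor $r$ in the numerator of \eqref{eqn:udimhat} does not cancel the $1/r$ in the exponent, because the denominator $-\log(e_{n,r}^r-e_{\infty,r}^r)$ grows only like $\frac{1}{s}\log n$, not $\frac{r}{s}\log n$. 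The paper sidesteps this loss by asserting a \emph{pointwise} inequality of the form $\rho(x,\gamma)^r-\rho(x,\pi_S(x))^r\le\rho(a_i,\pi_S(x))^r$ on each cell $C_i$, yielding $\hat e_{n,r}(P;S)\le e_{n,r}(T_*P;\pi_S(K))$ directly with no Minkowski step; you should compare your triangle-plus-Minkowski route against that pointwise $r$-th-power estimate, since the latter is the device that prevents the factor-of-$r$ inflation (and is itself worth checking carefully, as $(a+b)^r-a^r\ge b^r$ for $r>1$).
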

\begin{proof}
    Given conditions \eqref{cond_A} and \eqref{cond_B} and by Lemma~\ref{lemma:approximation}, and Lemma~\ref{lemma:e_infty_r_by_projection_map}, $\hat{e}_{n,r}^r(P;S)=\inf\limits_{\substack{{\alpha\subseteq S}\\ |\alpha|\leq n}}\int \rho(x,\alpha)^r - \rho(x,\pi_S(x))^r dP(x)$. Let $\{A_i\}_{i\leq n}$ be a constrained Voronoi's partition with respect to $P$ and $S$, with $\beta$ being one optimal quantizer, then we have:
    \begin{align*}
        n^{\frac{1}{l}}\hat{e}_{n,r}(\mu;S)=n^{\frac{1}{l}}\biggp{\inf\limits_{\substack{{\alpha\subseteq S}\\ |\alpha|\leq n}}\int \rho(x,\alpha)^r -\rho(x,\pi_S(x))^r dP(x)}^{\frac{1}{r}}=n^{\frac{1}{l}}\biggp{\sum\limits_{i=1}^n\int_{A_i} \rho(x,\alpha)^r - \rho(x,\pi_S(x))^r dP(x)}^{\frac{1}{r}}
    \end{align*}
    Lemma~\ref{lemma:pi_s_k_single_valued} indicates that $T$ is measurable, and it is a classical result that the push-forward measure $T_*P$, where $T_*P(E)=P(T^{-1}(E))$, is a Borel measure on $\mathcal{B}(\pi_S(K))$. Consider the constrained n-quantizer $\gamma=\{a_i\}_{i\leq n}$ for $e_{n,r}(T_*P;S)$ with corresponding to Voronoi's partition $\{B_i\}_{i\leq n}$. Furthermore, let a sequence of Borel sets $\{C_i\}_{i\leq n}$ be a partition of $K$ such that for $x\in C_i$, $\pi_S(x)\in B_i$. 
     Let $g_i:B_i\rightarrow[0,\infty)$ satisfy $g:x\mapsto \rho(a_i,x)$ and consider $T$ constructed in Lemma~\ref{lemma:pi_s_k_single_valued} we then have the following estimate:
    \begin{align*}
        n^{\frac{1}{l}}\tilde{e}_{n,r}(P;S)&\leq n^{\frac{1}{l}}\hat{e}_{n,r}(P;S)= n^{\frac{1}{l}}\biggp{\sum\limits_{i=1}^n\int_{A_i} \biggb{\rho(x,\beta)^r - \rho(x,\pi_S(x))^r} dP(x)}^{\frac{1}{r}}\\&\leq n^{\frac{1}{l}}\biggp{\sum\limits_{i=1}^n\int_{C_i} \biggb{\rho(x,\gamma)^r - \rho(x,\pi_S(x))^r} dP(x)}^{\frac{1}{r}}\leq n^{\frac{1}{l}}\biggp{\sum\limits_{i=1}^n\int_{C_i} \rho(a_i,\pi_S(x))^r dP(x)}^{\frac{1}{r}}
        \\&=n^{\frac{1}{l}}\biggp{\sum\limits_{i=1}^n\int_{B_i} \rho(a_i,y)^r \rho(T_*P(y))}^{\frac{1}{r}}=n^{\frac{1}{l}}e_{n,r}(T_*P;\pi_S(K))
    \end{align*}
    and for $l=\udimqhat{r}{T_*P}$ and $t=\ldimqhat{r}{T_*P}$ we have
    \begin{align*}
        &0\leq\limsup\limits_{n\rightarrow\infty}n^{\frac{1}{l}}\hat{e}_{n,r}(P;S)\leq \limsup\limits_{n\rightarrow\infty}n^{\frac{1}{l}}e_{n,r}(T_*P;S)<\infty\\ &0\leq\liminf\limits_{n\rightarrow\infty}n^{\frac{1}{t}}\hat{e}_{n,r}(P;S)\leq \liminf\limits_{n\rightarrow\infty}n^{\frac{1}{t}}e_{n,r}(T_*P;S)<\infty
    \end{align*}
    therefore $\udimqhat{r}{P;S}\leq \udimqhat{r}{T_*P;S}$ and $\ldimqhat{r}{P;S}\leq \ldimqhat{r}{T_*P;S}$. Moreover, by Remark~\ref{generalized_proof_dai08}, the proof of the third argument of Proposition~\ref{prop:dimensions_in_articles} in \cite{dai2008quantization} and \cite[Section 11.3]{graf} informed us that $\ldimqhat{r}{P;S}\leq\ldimb{}{\pi_S(K)}$, $\udimqhat{r}{T_*P;S}\leq \udimb{*}{T_*P}\leq\udimb{}{\pi_S(K)}$.
    We can establish the results for $\udimqtilde{r}{P;S}$ and $\ldimqtilde{r}{P;S}$ in a similar way as $\tilde{e}_{n,r}(P;S)\leq\hat{e}_{n,r}(P;S)$.
\end{proof}

\section{Lower Asymptotics for Constrained Quantization Dimensions}\label{section:lower_quantization_dimension_estimate}
\subsection{Conditional Lower Estimate When Quantizers Outside  $\pi_S(K)$}\label{section:conditional_lower_bound}
To investigate the lower quantization dimension on the constraint $S$, we first construct a proper weight function. This weight function is designed to measure the relative difference between the distances from a point $x\in K$ to a $y\in S$ the constraint set and from $s$ to, $\pi_S(x)$ its projection on $S$, which facilitates the subsequent precise analysis and estimation of the quantization error. Specifically, we define the following continuous function:
\begin{equation}\label{eq:weight}
    w_\lambda(x,y)=
        \dfrac{\rho(x,y)-\rho(x,\pi_S(x))}{\lambda+\rho(y,\pi_S(x))},
\end{equation}
where $w_\lambda:K\times S\rightarrow\bbr^+\cup\{0\}$ is a bounded continuous function with $|w_\lambda(x,y)|<1$. Suppose $\alpha_n$ is the $n$-quantizers for $e_{n,1}(P;S)$ with $\{V_i\}_{i\leq|\alpha_n|}$ the Voronoi partition corresponding to each $a_i\in\alpha_n$, we then can represent $\hat{e}_{n,1}(P;S)$ alternatively, 
\begin{equation}\label{eq:alternative_e_n_one}
    \hat{e}_{n,1}(P;S)=\sum\limits_{i=1}^{|\alpha_n|}\int_{V_i}w_\lambda(x,a_i)\rho(a_i,\pi_S(x))dP(x)+\lambda\sum\limits_{i=1}^{|\alpha_n|}\int_{V_i}w_\lambda(x,a_i)dP(x).
\end{equation}
For a subset $S'\subseteq S$, consider the set,
\begin{align*}
    \{x\in K:w_\lambda(x,y)>0,\forall y\in S'\}=\{x\in K:\rho(x,y)-\rho(x,\pi_S(x))>0,\forall y\in S'\},
\end{align*}
and in addition,
\begin{align}
    \{x\in K:w_\lambda(x,y)>0,\forall y\in S'\}=&\bigcup\limits_{\delta>0}\bigcap\limits_{y\in S'}\{x\in K:w_\lambda(x,y)\geq\delta\}.\label{eq:set_F_delta}
\end{align}
Let $F_\delta=\bigcap\limits_{y\in S'}\{x\in K:w_\lambda(x,y)\geq\delta\}$. For $S'=S\setminus \pi_S(K)$, then for any $y\in S'$ we have $w_\lambda(x,y)>0$ for any $x\in K$, for any $\lambda>0$ and $\rho(x,y)-\rho(x,\pi_S(x))>0$. 
\begin{lemma}[Lower dimensions when quantizers lie out of $\pi_S(K)$]\label{lemma:conditional_lower_bound}
    Let $S$ be a closed constraint, $P$ be a compactly supported probability with $\supp{P}=K$. Suppose that the condition \eqref{cond_L2} is true. In addition, suppose that for the measurable selection $T:K\rightarrow \pi_S(K)$ constructed in Lemma~\ref{lemma:pi_s_k_single_valued}, the pushforward measure $T_*P$ is upper Ahlfors regular of dimension $d$, then for $r\geq 1$, $\ldimqhat{r}{P;S}\geq d$, and $\ldimqtilde{r}{P;S}\geq d$.
\end{lemma}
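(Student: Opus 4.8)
The plan is to reduce to the order $r=1$, then bound $\hat e_{n,1}(P;S)$ from below by a quantization functional for the push‑forward measure $T_*P$, and finally apply the classical covering lower bound for an upper Ahlfors regular measure.

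\emph{Step 1: reduction to $r=1$.} By Lemma~\ref{lemma:monotone_errors_of_e_n_infty_e_n_r_constrained}, for every $r\ge 1$ we have $\hat e_{n,1}(P;S)\le \hat e_{n,r}(P;S)$ and $\hat e_{n,1}(P;S)=\tilde e_{n,1}(P;S)\le \tilde e_{n,r}(P;S)$, whence (taking $\liminf$ in \eqref{eqn:ldimhat} and \eqref{eqn:udim_ldim_tilde}) $\ldimqhat{1}{P;S}\le \ldimqhat{r}{P;S}$ and $\ldimqhat{1}{P;S}=\ldimqtilde{1}{P;S}\le \ldimqtilde{r}{P;S}$. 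Hence it suffices to prove $\ldimqhat{1}{P;S}\ge d$, and by \eqref{eqn:ldimhat} this is equivalent to showing $\liminf_{n\to\infty} n^{1/s}\hat e_{n,1}(P;S)=\infty$ for every $s<d$; it is enough to establish a bound $\hat e_{n,1}(P;S)\ge c\,n^{-1/d}$ for large $n$ and some $c>0$. Fix $s<d$.

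\emph{Step 2: transfer to $\pi_S(K)$ (the crux).} Let $\alpha_n$ be the $n$‑quantizer of $e_{n,1}(P;S)$ furnished by \eqref{cond_L2}, so $\alpha_n\subseteq S\setminus\pi_S(K)=S'$; by the discussion preceding the lemma, $w_\lambda(x,a)>0$ for all $x\in K$, $a\in\alpha_n$, $\lambda>0$. Using the characterization $e_{\infty,1}(P;S)=\int\rho(x,\pi_S(x))\,dP(x)$ (Lemma~\ref{lemma:e_infty_r_by_projection_map}) together with the identity \eqref{eq:alternative_e_n_one}, for every $\lambda>0$,
\[
\hat e_{n,1}(P;S)=\sum_i\int_{V_i} w_\lambda(x,a_i)\,\rho(a_i,\pi_S(x))\,dP(x)+\lambda\sum_i\int_{V_i} w_\lambda(x,a_i)\,dP(x),
\]
with both sums non‑negative. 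To each $a\in\alpha_n$ attach a nearest point $b(a)$ of the compact set $\pi_S(K)$ (Lemma~\ref{compactness_pi_s_k}), and put $\beta_n:=\{b(a):a\in\alpha_n\}\subseteq\pi_S(K)$, $|\beta_n|\le n$. The goal is to combine the reverse triangle inequality, the boundedness of the denominator $\lambda+\rho(a_i,\pi_S(x))$ in $w_\lambda$, and the fact that $\alpha_n$ stays off $\pi_S(K)$ to extract from the displayed identity an estimate of roughly the form
\[
\hat e_{n,1}(P;S)\ge c'\int_{\overline{T(K)}}\rho(y,\beta_n)\,dT_*P(y),\qquad c'=c'(\lambda)>0,
\]
i.e.\ to show that, up to a fixed constant, the $L^{1}$ error of $\beta_n$ against $T_*P$ is a lower bound for $\hat e_{n,1}(P;S)$. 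This is where \eqref{cond_L2} and the regularization $\lambda$ genuinely do the work: there is no pointwise inequality $\rho(x,a_i)-\rho(x,\pi_S(x))\ge c\,\rho(a_i,\pi_S(x))$ (the left side can be far smaller than the right when $a_i$ lies almost on the sphere $\partial B(x,\rho(x,S))$), so the bound has to be produced after integrating over the Voronoi cells, with the $\lambda$‑denominator absorbing the defect caused by the ambient term $\rho(x,\pi_S(x))$, and with $\alpha_n\cap\pi_S(K)=\varnothing$ guaranteeing the positivity that keeps the estimate from degenerating.

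\emph{Step 3: classical lower bound for $T_*P$ and conclusion.} Since $T_*P$ is upper Ahlfors regular of dimension $d$, there is $C>0$ with $T_*P(B(y,\varepsilon))\le C\varepsilon^{d}$ for all $y$ and all small $\varepsilon>0$. Given any finite $\beta$ with $|\beta|\le n$, take $\varepsilon=(2Cn)^{-1/d}$; then $T_*P\!\left(\bigcup_{b\in\beta}B(b,\varepsilon)\right)\le nC\varepsilon^{d}=\tfrac12$, so $T_*P(\{y:\rho(y,\beta)>\varepsilon\})\ge\tfrac12$ and $\int\rho(y,\beta)\,dT_*P(y)\ge\tfrac12\varepsilon=\tfrac12(2Cn)^{-1/d}$. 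Inserting $\beta=\beta_n$ into Step 2 yields $\hat e_{n,1}(P;S)\ge c\,n^{-1/d}$ for large $n$, hence $n^{1/s}\hat e_{n,1}(P;S)\ge c\,n^{1/s-1/d}\to\infty$ because $s<d$. Thus $\ldimqhat{1}{P;S}\ge d$, and by Step 1 both $\ldimqhat{r}{P;S}\ge d$ and $\ldimqtilde{r}{P;S}\ge d$ for all $r\ge 1$.

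The monotonicity reduction of Step 1 and the covering estimate of Step 3 are routine; the entire difficulty is concentrated in Step 2, namely producing the comparison $\hat e_{n,1}(P;S)\gtrsim \int\rho(y,\beta_n)\,dT_*P(y)$ with a constant independent of $n$, for which condition \eqref{cond_L2} and the weighted distance $w_\lambda$ are the essential tools.
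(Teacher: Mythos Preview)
Your Step~1 (monotonicity reduction to $r=1$) and the general shape of Step~3 are fine and in the same spirit as the paper. The gap is exactly where you locate it: Step~2 is not proved, and the route you sketch there is \emph{not} the one the paper takes.

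You propose to project each $a_i\in\alpha_n$ to a nearest point $b(a_i)\in\pi_S(K)$, set $\beta_n=\{b(a_i)\}$, and establish a uniform comparison
\[
\hat e_{n,1}(P;S)\;\ge\; c'\int \rho(y,\beta_n)\,dT_*P(y)
\]
with $c'$ independent of $n$. You correctly observe that no pointwise bound $\rho(x,a_i)-\rho(x,\pi_S(x))\ge c\,\rho(a_i,\pi_S(x))$ holds, but you give no argument that integration and the parameter $\lambda$ actually repair this; and in fact it is unclear that they do. The denominator $\lambda+\rho(a_i,\pi_S(x))$ in $w_\lambda$ is bounded, but the numerator $\rho(x,a_i)-\rho(x,\pi_S(x))$ can be arbitrarily small compared with $\rho(T(x),\beta_n)$ on sets of large $P$--measure, and nothing in your outline prevents the constant from degenerating with $n$. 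So as written, Step~2 is a genuine gap, not just a missing detail.

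The paper avoids this comparison entirely. It never forms $\beta_n$. Instead it fixes $\lambda=1$, takes the set
\[
F_\delta=\bigcap_{y\in S\setminus\pi_S(K)}\{x\in K:\;w_1(x,y)\ge\delta\},
\]
and uses Lemma~\ref{lemma:delta_naught} to pick $\delta_0$ with $P(F_{\delta_0})>0$. On each Voronoi cell one restricts to $A_{\delta_0}^1(a_i)=\{x:w_1(x,a_i)\ge\delta_0\}\supseteq F_{\delta_0}$, so that
\[
\hat e_{n,1}(P;S)\;\ge\;\delta_0\sum_i\int_{V_i\cap A_{\delta_0}^1(a_i)}\rho\bigl(a_i,T(x)\bigr)\,dP(x).
\]
Then, rather than your global covering bound, the paper applies the per--cell moment inequality (Lemma~\ref{lemma:auxillary_conditional_lower_bound}) for the upper Ahlfors regular measure $T_*P$ to obtain
\[
\int_{V_i\cap A_{\delta_0}^1(a_i)}\rho(a_i,T(x))\,dP(x)\;\ge\; C\,P\bigl(V_i\cap A_{\delta_0}^1(a_i)\bigr)^{1+1/d},
\]
and finishes with H\"older:
\[
n^{1/d}\sum_i P(V_i\cap A_{\delta_0}^1(a_i))^{1+1/d}\;\ge\;\Bigl(\sum_i P(V_i\cap A_{\delta_0}^1(a_i))\Bigr)^{1+1/d}\;\ge\;P(F_{\delta_0})^{1+1/d}>0.
\]
The uniform constant thus comes from $P(F_{\delta_0})$, not from any comparison of $\hat e_{n,1}(P;S)$ with an unconstrained quantization error of $T_*P$. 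If you want to repair your argument, the cleanest fix is to drop $\beta_n$ altogether and follow this cell--by--cell route; your Step~3 covering estimate can be replaced by Lemma~\ref{lemma:auxillary_conditional_lower_bound} together with H\"older, which is what couples naturally with the $F_{\delta_0}$ restriction.
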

\begin{proof}
    First, follwing the same notation of \eqref{eq:alternative_e_n_one}, \eqref{eq:weight} and by \eqref{eq:alternative_e_n_one}, we have, 
    \begin{equation}\label{eq:hat_e_n_one_modified_weight1}
        \sum\limits_{i=1}^{|\alpha_n|}\int_{V_i}w_\lambda(x,a_i)\rho(a_i,\pi_S(x))dP(x)-\lambda\leq \hat{e}_{n,1}(P;S),
    \end{equation}
    and,
    \begin{equation}\label{eq:hat_e_n_one_modified_weight2}
        \hat{e}_{n,1}(P;S)\leq \sum\limits_{i=1}^{|\alpha_n|}\int_{V_i}w_\lambda(x,a_i)\rho(a_i,\pi_S(x))dP(x)+\lambda.
    \end{equation}
   
    In the following, we focus on $\sum\limits_{i=1}^{|\alpha_n|}\int_{V_i}w_\lambda(x,a_i)\rho(x,\pi_S(x))dP(x)$ as $\lambda$ can be chosen arbitrarily small. Let 
    \begin{equation}\label{eq:A_delta_a_i}
        A_\delta^\lambda(y)=\{x\in K:\omega_\lambda(x,y)\geq \delta\}.
    \end{equation}
    We choose $\delta_0=\delta$, guaranteed by Lemma~\ref{lemma:delta_naught} and the fact that if $w_\lambda(x,y)\geq w_1(x,y)\geq \delta$, for $$F_\delta=\bigcap\limits_{y\in S\setminus\pi_S(K)}\{x\in K:w_1(x,y)\geq \delta\},\quad \lambda=1,$$ and for the measurable selector constructed in Lemma~\ref{lemma:pi_s_k_single_valued},
    \begin{align*}
    \hat{e}_{n,1}(P;S)+\lambda\geq&\sum\limits_{i=1}^{|\alpha_n|}\int_{V_i}w_\lambda(x,a_i)\rho(a_i,\pi_S(x))dP(x)\\\geq& \sum\limits_{i=1}^{|\alpha_n|}\delta_0\int_{V_i\cap A_{\delta_0}^\lambda(a_i)}\rho(a_i,\pi_S(x))dP(x)+\sum\limits_{i=1}^{|\alpha_n|}\int_{V_i\setminus A_{\delta_0}^\lambda(a_i)}w_\lambda(x,\pi_S(x))\rho(a_i,\pi_S(x))dP(x)\\
    \geq &\sum\limits_{i=1}^{|\alpha_n|}\delta_0\int_{V_i\cap A_{\delta_0}^\lambda(a_i)}\rho(a_i,\pi_S(x))dP(x)=\sum\limits_{i=1}^{|\alpha_n|}\delta_0\int_{V_i\cap A_{\delta_0}^\lambda(a_i)}\rho(a_i,T(x))dP(x).
    \end{align*}
    In addition,
    \begin{align*}
        \delta_0\int_{V_i\cap A_{\delta_0}^\lambda(a_i)}\rho(a_i,T(x))dP(x)&=\int \rho(a_i,T(x))dP|_{V_i\cap A_{\delta_0}^\lambda(a_i)}(x)=\int_{T(V_i\cap A_{\delta_0}^\lambda(a_i))}d(a_i,y)dT_*(P|_{V_i\cap A_{\delta_0}^\lambda(a_i)})(y).
    \end{align*}
    By Lemma~\ref{lemma:auxillary_conditional_lower_bound}, there exists some constant $C>0$ only dependent to $T$, $P$ such that, 
    \begin{align*}
        \int_{T(V_i\cap A_{\delta_0}^\lambda(a_i))}d(a_i,y)dT_*(P|_{V_i\cap A_{\delta_0}^\lambda(a_i)})(y)&\geq CT_{*}(P|_{V_i\cap A_{\delta_0}^\lambda(a_i)})(T(V_i\cap A_{\delta_0}^\lambda(a_i)))^{1+1/d}\\&\geq CP|_{V_i\cap A_{\delta_0}^\lambda(a_i)}(V_i\cap A_{\delta_0}^\lambda(a_i))^{1+1/d}=CP(V_i\cap A_{\delta_0}^\lambda(a_i))^{1+1/d}.
    \end{align*}
    By H\"older's inequality and Lemma~\ref{lemma:delta_naught},
    \begin{align}\label{eq:uniform_lower_bound}
        n^{1/d}\sum\limits_{i=1}^{|\alpha_n|} P(V_i\cap A_{\delta_0}^\lambda(a_i))^{1+1/d}&\geq |\alpha_n|^{1/d}\sum\limits_{i=1}^{|\alpha_n|} P(V_i\cap A_{\delta_0}^\lambda(a_i))^{1+1/d}\geq \biggp{\sum\limits_{i=1}^{|\alpha_n|}P(V_i\cap A_{\delta_0}^\lambda(a_i))}^{1+1/d}\nonumber\\
        &\geq P(\bigcap\limits_{i=1}^{|\alpha_n|}A_{\delta_0}^\lambda(a_i))^{1+1/d}\geq P(\bigcap\limits_{i=1}^{|\alpha_n|}A_{\delta_0}^1(a_i))^{1+1/d}\geq P(F_{\delta_0})^{1+1/d}>0,
    \end{align}
    and therefore, for $r\geq 1$,
    \begin{align*}
        \liminf\limits_{n\rightarrow\infty} n^{1/d}\hat{e}_{n,r}(P;S)\geq \liminf\limits_{n\rightarrow\infty} n^{1/d}\hat{e}_{n,1}(P;S)>0.
    \end{align*}
    As $\hat{e}_{n,1}(P;S)=\tilde{e}_{n,r}(P;S)$, and by the monotonicity results in Lemma~\ref{lemma:monotone_errors_of_e_n_infty_e_n_r_constrained}, we can also conculde that,
    \begin{align*}
        \liminf\limits_{n\rightarrow\infty} n^{1/d}\tilde{e}_{n,r}(P;S)\geq \liminf\limits_{n\rightarrow\infty} n^{1/d}\tilde{e}_{n,1}(P;S)>0.
    \end{align*}
\end{proof}
In the following, we present Example~\ref{example:quantizers_out_pi_s_k}, which include the application of the comparison results in Lemma~\ref{lemma:conditional_lower_bound}, when the quantizers lie out of $\pi_S(K)$. We consider the uniform measure
on the unit circle studied in \cite{rosenblatt2023uniform}. 
Our comparison principle yields at once the optimal radius
\(\rho_n=\frac{n}{\pi}\sin\bigl(\pi/n\bigr)\) and the quantization
dimension \(\widehat{\dim}_Q(P;S)=1\), thereby recovering the main result of
\cite[Remark 2.2.2]{rosenblatt2023uniform} by a much shorter argument. 
\begin{example}[Quantizers outside $\pi_S(K)$]\label{example:quantizers_out_pi_s_k}
 Consider $S=\overline{B(0,1)}$ and $K=\partial B(0,1)$ in $\mathbb R^2$, then there exist a measurable selector $T:K\rightarrow S, x\mapsto x$, which is exactly the identity map. Let $P=U_{\partial B(0,1)}$ be the uniform distribution on $\partial B(0,1)$. $\pi_S(K)=K$ has dimension of 1 in the sense of Hausdorff, box, and Ahlfors and quantization. We will then find the lower constrained quantization dimension. By Corollary~\ref{corollary:upperbound_prob_compactly_supported} and Proposition~\ref{prop:estimate_upper_lower_constrained_quantization_dim}, $\udimqhat{r}{P;S}=\udimqtilde{r}{P;S}=1$ for $r\geq 1$. By symmetry the $n$ codepoints form a regular $n$--gon
\[
\alpha_n=\{(\theta_i,\rho):\theta_i=\tfrac{2\pi i}{n},\; i=0,\dots,n-1\},
\qquad 0\le\rho\le1 .
\]
For a point $(1,t)$ on the unit circle, 
the nearest center is $(\rho,0)$ after rotation.
The squared distance is 
\(1+\rho^{2}-2\rho\cos t .\) Therefore
\[
\hat{e}_{n,2}(P;S)=\frac{n}{2\pi}\int_{-\pi/n}^{\pi/n}1+\rho^{2}-2\rho\cos t \,dt
        =1+\rho^{2}-\frac{2\rho n}{\pi}\sin\!\Bigl(\frac{\pi}{n}\Bigr).
\]

Observe that,
\(
\dfrac{d}{d\rho} \hat{e}_{n,2}(P;S)=2\rho-\dfrac{2n}{\pi}\sin\bigl(\dfrac{\pi}{n}\bigr),
\)
and setting $\dfrac{d}{d\rho} \hat{e}_{n,2}(P;S)=0$ gives
\[
  \rho=\frac{n}{\pi}\sin\!\Bigl(\frac{\pi}{n}\Bigr),
\]
which coincide with the result in \cite{rosenblatt2023uniform}. Notice that $\rho<1$ for any $n$ as $\sin(\pi/n)<\pi/n$ for $n\geq 2$, thus there exists a sequence of $n-$quantizers which lie strictly inside the unit disc as required. This result also coincide with Lemma~\ref{lemma:approximation} that under conditions \eqref{cond_A} and \eqref{cond_B}, the quantizers will approximate $\pi_S(K)$. By Lemma~\ref{lemma:conditional_lower_bound}, $\ldimqhat{r}{P;S}\geq 1$ and $\ldimqtilde{r}{P;S}\geq 1$, and therefore $\dimhat{r}{P;S}=\dimtilde{r}{P;S}=1$. Our results in Section~\ref{section:conditional_lower_bound} and Section~\ref{section:upper_quantization_dimension_estimate} coincide with the quantization dimension as presented in \cite{rosenblatt2023uniform}.
\end{example}
\begin{figure}[htbp]
\begin{center}
\begin{tikzpicture}[scale=3,>=stealth]
  \def\R{1}                           
  \def\rho{0.9355}                    
  \def\n{5}

  \draw[->,thick] (-1.2,0)--(1.25,0) node[right] {$x$};
  \draw[->,thick] (0,-1.2)--(0,1.25) node[above] {$y$};

  \fill[orange!40,draw=none] (0,0) circle (\R);
  \node[orange!80!black] at (0.2,0.55) {$S=\overline{B(0,1)}$};

  \draw[very thick,red] (0,0) circle (\R);
  \node[red!80!black] at (1.12,0.15) {$K=\partial B(0,1)$};

  \draw[very thick,blue,dashed] (0,0) circle (\R);
  \node[blue!80!black] at (-1.15,-0.15) {$\pi_S(K)=\partial B(0,1)$};

  \foreach \k in {0,...,4}{
      \coordinate (Q\k) at ({\rho*cos(72*\k)},{\rho*sin(72*\k)});
      \fill (Q\k) circle (0.03);
  }

  \coordinate (xs) at ({cos(15)},{sin(15)});
  \draw[->,thick] (xs) -- (Q0);

  \node[align=center] at (0,-1.45) {%
    {\color{orange}$S=\overline{B(0,1)}$}\quad\textbullet\quad
    {\color{blue}$\pi_S(K)=\partial B(0,1)$}\quad\textbullet\quad
    {\color{red}$K=\partial B(0,1)$}\\[2pt]
    $P$: uniform distribution on $K$\quad\textbullet\quad
    $\alpha_{5}$: optimal 5‑point constrained quantizer
  };
\end{tikzpicture}
\end{center}
\caption{Illustration of $K$, $S$, and $\pi_S(K)$ in Example~\ref{example:quantizers_out_pi_s_k}}
\end{figure}
    \subsection{Perturbative Stability of Lower Asymptotics for Nowhere Dense $\pi_S(K)$}\label{section:perturbative}
To effectively control over the quantization error when the quantizers lie on $\pi_S(K)$, it is crucial to understand the local topological structure of $S$ around the projection points $\pi_S(K)$. To formalize this idea, we distinguish the set $A$ and $B$ in \eqref{eq:isolated_points} as the set non-isolated points and isolated points in $S$ respectively. We then formulate a mild density-type condition \eqref{cond_L1} that ensures, locally around each point in $\pi_S(K)\setminus \overline{B}$,
there is access to nearby non-isolated points of $S$ that do not belong to $\pi_S(K)$. This condition allows us to “shift” or “perturb” quantization support points from $\pi_S(K)$ into the topologically richer subset $A$, thereby facilitating more robust error control. The following lemma makes this precise.

    \begin{proposition}[Lower asymptotics if $S\cap K=\varnothing$]\label{prop:lower_asymptotics_divided_s_k}
        Suppse $S$ is closed, $P$ is a compactly supported probability with $\supp{P}=K$. Let $r\geq 1$. Suppose $A$ and $B$ are constructed as \eqref{eq:isolated_points} does. Suppose the following,
        \begin{itemize}
            \item either $S\cap K=\varnothing$, or the nowhere density of $\pi_S(K)$;
            \item there is a sequence of quantizers $\{\alpha_n\}_n$ of $e_{n,r}(P;S)$ such that $\alpha_n\cap \overline{B}=\varnothing$ for all $n\in\mathbb N^+$;
            \item for the measurable selector $T:K\rightarrow\pi_S(K)$ constructed in Lemma~\ref{lemma:pi_s_k_single_valued}, the pushforward measure $T_*P$ is upper Ahlfors regular of dimension $d$;
            \item for all $p\in \pi_S(K)\setminus\overline{B}$, there exists some $\delta>0$, such that $(B(p,\delta)\cap A)\setminus\pi_S(K)\neq\varnothing$.
        \end{itemize}
        Then $\ldimqhat{r}{P;S}\geq d$ and $\ldimqtilde{r}{P;S}\geq d$.
    \end{proposition}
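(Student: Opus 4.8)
The plan is to reduce the two assertions, via monotonicity in $r$, to a single lower estimate at $r=1$, and then to prove that estimate by a perturbation that displaces the given quantizers off $\pi_S(K)$ so that the weighted–distance machinery of Lemma~\ref{lemma:conditional_lower_bound} applies to the perturbed configuration.

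\emph{Reduction to $r=1$.} By Lemma~\ref{lemma:monotone_errors_of_e_n_infty_e_n_r_constrained}, $\hat e_{n,r}(P;S)\ge\hat e_{n,1}(P;S)$ and $\tilde e_{n,r}(P;S)\ge\tilde e_{n,1}(P;S)$ for every $r\ge1$, and $\tilde e_{n,1}(P;S)=\hat e_{n,1}(P;S)$ by \eqref{eqn:defn_generalized_e_n_r}; by \eqref{eqn:ldimhat} and \eqref{eqn:udim_ldim_tilde} it therefore suffices to prove $\liminf_{n\to\infty}n^{1/d}\hat e_{n,1}(P;S)>0$. Note that $\pi_S(K)$ is compact (Lemma~\ref{compactness_pi_s_k}) and, under the first hypothesis, nowhere dense (automatic when $S\cap K=\varnothing$ by Lemma~\ref{lemma:nowhere_dense_pi_s_k}). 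Covering $\pi_S(K)$ by finite $\varepsilon$–nets gives $e_{\infty,1}(P;S)\le\int\rho(x,\pi_S(x))\,dP(x)$, so for the optimal $e_{n,1}(P;S)$–quantizer $\alpha_n$ — which by hypothesis may be taken with $\alpha_n\cap\overline B=\varnothing$ — we have $\hat e_{n,1}(P;S)\ge\int\bigl(\rho(x,\alpha_n)-\rho(x,\pi_S(x))\bigr)\,dP(x)$.

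\emph{Perturbation and the weighted bound.} Fix $n$ and $\eta>0$. Since $\alpha_n\cap\overline B=\varnothing$, each $a\in\alpha_n\cap\pi_S(K)$ lies in $\pi_S(K)\setminus\overline B$, so the density part of \eqref{cond_L1} supplies $b(a)\in\bigl(B(a,\eta)\cap A\bigr)\setminus\pi_S(K)$; replacing these points and keeping the others yields $\beta_n=\{b_1,\dots,b_k\}\subseteq S\setminus\pi_S(K)$ with $k\le n$, $\rho(x,\beta_n)\le\rho(x,\alpha_n)+\eta$ for all $x$, hence $\hat e_{n,1}(P;S)\ge\int\bigl(\rho(x,\beta_n)-\rho(x,\pi_S(x))\bigr)dP-\eta$. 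Now repeat the argument of Lemma~\ref{lemma:conditional_lower_bound} with $\beta_n$ and its Voronoi partition $\{W_i\}$: since each $b_i\notin\pi_S(K)$ the weight $w_\lambda(\cdot,b_i)$ of \eqref{eq:weight} is strictly positive on $K$, and expanding $\rho(x,b_i)-\rho(x,\pi_S(x))=w_\lambda(x,b_i)(\lambda+\rho(b_i,\pi_S(x)))$ and dropping the nonnegative $\lambda$–term gives $\int(\rho(x,\beta_n)-\rho(x,\pi_S(x)))dP\ge\sum_i\int_{W_i}w_\lambda(x,b_i)\rho(b_i,\pi_S(x))dP$. Take $\lambda=1$, choose $\delta_0$ by Lemma~\ref{lemma:delta_naught} so that $P(F_{\delta_0})>0$ for the set $F_{\delta_0}$ of \eqref{eq:set_F_delta}; because each $b_i\in S\setminus\pi_S(K)$ one has $F_{\delta_0}\subseteq\bigsqcup_i\bigl(W_i\cap A_{\delta_0}^{1}(b_i)\bigr)$ with $A_{\delta_0}^1(b_i)$ as in \eqref{eq:A_delta_a_i}. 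Pushing each integral forward by the measurable selector $T$ of Lemma~\ref{lemma:pi_s_k_single_valued}, using the upper $d$–regularity of $T_*P$ through Lemma~\ref{lemma:auxillary_conditional_lower_bound}, and combining over the at most $n$ cells by Hölder exactly as in \eqref{eq:uniform_lower_bound}, we obtain a constant $c>0$ — depending only on $\delta_0,d,P,T$ and in particular on neither $n$ nor $\eta$ — with $n^{1/d}\int(\rho(x,\beta_n)-\rho(x,\pi_S(x)))\,dP\ge c$.

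\emph{Conclusion.} The last two inequalities give $n^{1/d}\hat e_{n,1}(P;S)\ge c-n^{1/d}\eta$ for every $\eta>0$; with $n$ fixed, letting $\eta\downarrow0$ yields $n^{1/d}\hat e_{n,1}(P;S)\ge c$, so $\liminf_{n\to\infty}n^{1/d}\hat e_{n,1}(P;S)\ge c>0$, and by the reduction $\ldimqhat{r}{P;S}\ge d$ and $\ldimqtilde{r}{P;S}\ge d$ for all $r\ge1$.

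\emph{Main obstacle.} The heart of the argument is the perturbation step together with the uniformity of $c$. One must be able to nudge every quantizer element lying on $\pi_S(K)$ into $A\setminus\pi_S(K)$ by an arbitrarily small amount, which is exactly what the joint hypotheses — nowhere density of $\pi_S(K)$, the assumption $\alpha_n\cap\overline B=\varnothing$ (placing every offending point in the good stratum $\pi_S(K)\setminus\overline B$ where \eqref{cond_L1} bites), and the local density condition \eqref{cond_L1} itself — are designed to provide. At the same time, the weighted lower bound must survive the limit $\eta\downarrow0$; this works only because $\delta_0$, the set $F_{\delta_0}$, and the constant from Lemma~\ref{lemma:auxillary_conditional_lower_bound} are chosen before the perturbation and do not depend on it, so $\eta$ enters the final estimate solely through the additive term $n^{1/d}\eta$, which is killed by fixing $n$ first.
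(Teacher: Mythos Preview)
Your proposal is correct and follows essentially the same route as the paper: perturb the optimal quantizer off $\pi_S(K)$ using the density hypothesis, then invoke the weighted--distance machinery of Lemma~\ref{lemma:conditional_lower_bound} on the perturbed configuration, and use monotonicity to pass from $r=1$ to general $r\ge1$. The only cosmetic difference is that the paper chooses the perturbation radius to be $\varepsilon/n^{1/d}$ upfront (so the additive error already carries the right scaling), whereas you perturb by a fixed $\eta$ and send $\eta\downarrow0$ with $n$ held fixed at the end; both variants yield the same uniform lower bound since, as you correctly stress, the constant $c$ from \eqref{eq:uniform_lower_bound} depends only on $\delta_0$, $P$, $T$ and not on the particular configuration.
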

    \begin{proof} The proof of Proposition~\ref{prop:lower_asymptotics_divided_s_k} can be illustrated in two major steps: to construct quasi-quantizers and then reduce the problem to the case in Lemma~\ref{lemma:conditional_lower_bound}. We denote $\{\alpha_n\}_n$ as a sequence of constrained quantizers of $e_{n,r}(P;S)$. Let $S=A\cup B$ where $B$ is the set of isolated points as Lemma \eqref{eq:isolated_points}.
        \paragraph*{Step 1. Constructing Epsilon-$n$-Quantizers for $\alpha_n\subset \pi_S(K)\setminus \overline{B}$.}{For each $\alpha_n$, denote each element by $\{a_j\}_{j\leq |\alpha_n|}$, with the corresponding Voronoi partitions denoted by $\{V_j\}_{j\leq |\alpha_n|}$. Let $\beta_n=\{a\in \alpha_n,a\in\pi_S(K)\}$, then for each $b\in\beta_n$, by the nowhere density of $\pi_S(K)$ guaranteed by Lemma~\ref{lemma:nowhere_dense_pi_s_k}, we can perturb each $b=a_j$ inside the ball $B(b,\dfrac{\varepsilon}{n^{1/d}})$, and then choose $\tilde{b}=\tilde{a}_j\not\in\pi_S(K)$. Guaranteed by \eqref{cond_L1}, $\tilde{b}=\tilde{a}_j$ can be inside $S$. We can replace each $b=a_j\in\beta_n$ by $\tilde{b}=\tilde{a}_j$ in $\alpha_n$ to form another set $\tilde{\alpha}_n$. We denote $\{c_j\}_{j\leq |\tilde{\alpha}_j|}$ as elements of $\tilde{\alpha}_n$. The set $\tilde{\alpha}_n$ is called a \textit{epsilon-$n$-quantizer}, because for, 
        \begin{align*}
            e_{n,1}(P;S)=\sum\limits_{\substack{j\leq |\alpha_n|\\ a_j\in\beta_n}}\int_{V_j}\rho(x,a_j)dP(x)+\sum\limits_{\substack{j\leq |\alpha_n|\\ a_j\not\in\beta_n}}\int_{V_j}\rho(x,a_j)dP(x),
        \end{align*}
        we have, 
        \begin{align*}
            \sum\limits_{\substack{j\leq |\alpha_n|\\ a_j\in\beta_n}}\int_{V_j}\biggp{\rho(x,\tilde{a}_j)-\dfrac{\varepsilon}{ n^{1/d}}}dP(x)\leq \sum\limits_{\substack{j\leq |\alpha_n|\\ a_j\in\beta_n}}\int_{V_j}\rho(x,a_j)dP(x)\leq \sum\limits_{\substack{j\leq |\alpha_n|\\ a_j\in\beta_n}}\int_{V_j}\biggp{\rho(x,\tilde{a}_j)+\dfrac{\varepsilon}{n^{1/d}}}dP(x),
        \end{align*}
        and therefore for arbitrarily small $\varepsilon>0$,
        \begin{align*}
            e_{n,1}(P;S)-e_{\infty,1}(P;S)-\varepsilon/n^{1/d}\leq \sum\limits_{j\leq |\tilde{\alpha}_n|}\rho(x,c_j)dP(x)-e_{\infty,1}(P;S)\leq e_{n,1}(P;S)-e_{\infty,1}(P;S)+\varepsilon/n^{1/d}
        \end{align*}
        }
        \paragraph*{Step 2. Reducing to Lemma~\ref{lemma:conditional_lower_bound} for $\alpha_n\subseteq \pi_S(K)\setminus \overline{B}$.}{
        As $\tilde{\alpha}_n\subseteq S\setminus\pi_S(K)$, we can repeat the argument in Lemma~\ref{lemma:conditional_lower_bound}, to produce the result that,
        \begin{align*}
            \liminf\limits_{n\rightarrow\infty}n^{1/d}\biggp{\sum\limits_{j\leq |\tilde{\alpha}_n|}\rho(x,c_j)dP(x)-e_{\infty,1}(P;S)}+\varepsilon\geq \liminf\limits_{n\rightarrow\infty}n^{1/d}\hat{e}_{n,1}(P;S)=\liminf\limits_{n\rightarrow\infty}n^{1/d}\tilde{e}_{n,1}(P;S),
        \end{align*}
        }
        and,
        \begin{align*}
            0<\liminf\limits_{n\rightarrow\infty}n^{1/d}\biggp{\sum\limits_{j\leq |\tilde{\alpha}_n|}\rho(x,c_j)dP(x)-e_{\infty,1}(P;S)}\leq \liminf\limits_{n\rightarrow\infty}n^{1/d}\hat{e}_{n,1}(P;S)+\varepsilon.
        \end{align*}
        By the squeezing properties we can conclude the lower quantization dimensions when $\alpha_n$ are not confined in $S\setminus \pi_S(K)$.
    \end{proof}

\begin{remark}[Epsilon-$n$-quantizers in  Proposition~\ref{prop:lower_asymptotics_divided_s_k} without the condition \eqref{cond_L1}] The condition \eqref{cond_L1} is not absolutely necessary to construct the admissible epsilon-$n$-quantizers. Let $a$ be an element of a $n$-quantizer on $\pi_S(K)$, recall the construction in \eqref{eq:set_F_delta}, we only need the perturbed element $\tilde{a}$ to satisfy $\rho(\tilde{a},x)>\rho(x,\pi_S(x))$ for any $x\in K$ to produce a uniform lower bound as \eqref{eq:uniform_lower_bound} demonstrated. To be concrete, in Example~\ref{example:cond_a_fails} and Figure \ref{figure:condition_a_fails}, the condition \eqref{cond_L1} does not hold. There exists a sequence of $n-$quantizers lying on $\pi_S(K)$. There are two sides separated by the left (right) branch of $S$. Suppose $S_\delta$ to be collection of points on the side without $K$ in the closed $\delta$-neighborhood of $\pi_S(K)$. Then if $b$ is an element of a $n-$quantizer, we can obtain a perturbed $\tilde{b}\in S_\delta$ with $\tilde{b}\not\in \pi_S(K)$ and satisfying$\rho(\tilde{b},x)>\rho(x,\pi_S(x))$ for any $x\in K$.
    
\end{remark}
\subsection{Hausdorff Dimension Lower Bounds via Pushforward Measure}
With the geometry already settled, we now show that the Hausdorff dimension of the push‑forward measure itself furnishes the required universal lower bound for our constrained quantization dimensions.
\begin{proposition}
    $S\subseteq\bbrD$ is closed and $P$ is a probability compactly supported on $K\subseteq\bbrD$. Suppose the measurable selector $T$ constructed in Lemma~\ref{lemma:pi_s_k_single_valued} satisfies the Ahlfors regularity of $T_*P$ in dimension $d$. Let $r\geq 1$. Suppose either condition \eqref{cond_A} or \eqref{cond_B} are true. In addition, suppose $\ldimqhat{r}{P;S}\geq d$ and $\ldimqtilde{r}{P;S}\geq d$. Then, 
    \begin{itemize}
        \item $\dimh{T_*P}\leq \ldimqhat{r}{P;S}$ and $\dimh{T_*P}\leq \ldimqtilde{r}{P;S}$;
        \item if condition \eqref{cond_D} is true, then $\dim_H(T(K))=\dimh{T_*P}$;
        \item if condition \eqref{cond_D} is true and $T:K\rightarrow\pi_S(K)$ is surjective, then $\dim_H(\pi_S(K))\leq \ldimqhat{r}{P;S}\leq \ldimb{}{\pi_S(K)}$ and $\dim_H(\pi_S(K))\leq \ldimqtilde{r}{P;S}\leq \ldimb{}{\pi_S(K)}$.
    \end{itemize}
\end{proposition}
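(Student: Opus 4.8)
The plan is to derive the three claims by assembling the preceding results, passing from the pushforward measure to the underlying set. Since \eqref{cond_A} or \eqref{cond_B} is assumed, Lemma~\ref{lemma:support_pushforward_measure} gives $\supp{T_*P}=\overline{T(K)}$, and $\dim_H(T(K))=\dim_H(\overline{T(K)})$ by closure-invariance of Hausdorff dimension. For the first bullet I would use only that an Ahlfors $d$-regular measure has all of its dimensional invariants equal to $d$: the upper mass bound $T_*P(B(x,t)\cap S)\le Ct^{d}$ and the mass distribution principle force $\dim_H(A)\ge d$ for every Borel $A$ with $T_*P(A)>0$; the lower mass bound $T_*P(B(x,t)\cap S)\ge ct^{d}$ and a standard Vitali covering argument force $\dim_H(\overline{T(K)})\le d$ (as in Lemma~\ref{lemma:connection_lower_upper_ahlfors_hausdorff}); together these give $\dimh{T_*P}=d$ and $\dim_H(T(K))=d$. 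The inequalities $\dimh{T_*P}\le\ldimqhat{r}{P;S}$ and $\dimh{T_*P}\le\ldimqtilde{r}{P;S}$ are then immediate from the standing hypotheses $\ldimqhat{r}{P;S}\ge d$ and $\ldimqtilde{r}{P;S}\ge d$.

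For the second bullet, one direction is free: $T^{-1}(T(K))=K$, so $T_*P(T(K))=1$, and hence the Hausdorff dimension of $T_*P$ is at most that of the full-measure set $T(K)$, that is, $\dimh{T_*P}\le\dim_H(T(K))$. For the reverse I would invoke condition \eqref{cond_D}: through Lemma~\ref{lemma:lower_ahlfors_reg_T_*P} it yields the lower mass bound $T_*P(B(x,t)\cap S)\ge Ct^{s}$ for all $x\in\overline{T(K)}$ and $t>0$, with $s=\dim_H(\overline{T(K)})=\dim_H(T(K))$; combined with the upper Ahlfors bound (which also forces $\dim_H(\overline{T(K)})\ge d$) this pins $s=d$, and the mass distribution principle applied to the upper bound then gives $\dim_H(T(K))=d\le\dimh{T_*P}$. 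I expect the exponent bookkeeping here to be the main obstacle: one must reconcile the exponent $\dim_H(\pi_S(K))$ occurring in \eqref{cond_D} with $\dim_H(\overline{T(K)})$ in Lemma~\ref{lemma:lower_ahlfors_reg_T_*P} and with the regularity exponent $d$, and verify that the lower mass bound at the stated exponent really does follow from \eqref{cond_D}. This is automatic when $T$ is onto $\pi_S(K)$, since then $T(K)=\pi_S(K)$, and it is the only place where two-sided regularity of $T_*P$ is genuinely used.

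The third bullet is then a specialization. If $T$ is surjective onto $\pi_S(K)$, then $T(K)=\pi_S(K)$, so the second bullet (under \eqref{cond_D}) gives $\dim_H(\pi_S(K))=\dimh{T_*P}$, and the first bullet gives $\dimh{T_*P}\le\ldimqhat{r}{P;S}$ and $\dimh{T_*P}\le\ldimqtilde{r}{P;S}$. The matching upper bounds $\ldimqhat{r}{P;S}\le\ldimb{}{\pi_S(K)}$ and $\ldimqtilde{r}{P;S}\le\ldimb{}{\pi_S(K)}$ are precisely Proposition~\ref{prop:estimate_upper_lower_constrained_quantization_dim}, whose hypothesis \eqref{cond_A}/\eqref{cond_B} is in force here. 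Concatenating the two chains gives $\dim_H(\pi_S(K))\le\ldimqhat{r}{P;S}\le\ldimb{}{\pi_S(K)}$ and $\dim_H(\pi_S(K))\le\ldimqtilde{r}{P;S}\le\ldimb{}{\pi_S(K)}$, which is the assertion; apart from the exponent identification in the second bullet, the argument is bookkeeping of the earlier lemmas.
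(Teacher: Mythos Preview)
Your overall strategy is correct and in fact cleaner than the paper's. The paper argues only that $\dimh{T_*P}\le d$ (via Lemma~\ref{lemma:connection_lower_upper_ahlfors_hausdorff} applied to the lower regularity bound) and then invokes the standing hypothesis $\ldimqhat{r}{P;S}\ge d$ for the first bullet; for the second bullet it runs a separate contradiction argument using a Vitali cover together with condition~\eqref{cond_D} (through Lemma~\ref{lemma:lower_ahlfors_reg_T_*P}) to rule out the existence of a full-measure Borel set of smaller Hausdorff dimension. You instead exploit the \emph{upper} Ahlfors bound via the mass distribution principle to obtain $\dim_H(A)\ge d$ for every $A$ with $T_*P(A)>0$, which immediately pins $\dimh{T_*P}=d$ and $\dim_H(T(K))=d$; this renders condition~\eqref{cond_D} unnecessary for the second bullet and shortens the whole argument to a single paragraph. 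Both routes finish the third bullet in the same way, by specialization plus Proposition~\ref{prop:estimate_upper_lower_constrained_quantization_dim}.

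One correction: your claimed ``closure-invariance of Hausdorff dimension'' is false in general (the rationals in $[0,1]$ have dimension $0$ but closure of dimension $1$), so you cannot deduce $\dim_H(T(K))=\dim_H(\overline{T(K)})$ that way. Fortunately you do not need it. The mass distribution principle already gives $\dim_H(T(K))\ge d$ directly: any open cover $\{U_i\}$ of $T(K)$ is a Borel cover of a set of full $T_*P$-outer measure, so $\sum_i\diam(U_i)^d\ge C^{-1}\sum_i T_*P(U_i)\ge C^{-1}$, whence $\mathcal H^d(T(K))>0$. Combined with the trivial inclusion $T(K)\subseteq\overline{T(K)}$ and $\dim_H(\overline{T(K)})\le d$ from the lower bound, you get the equality $\dim_H(T(K))=d=\dimh{T_*P}$ without appealing to closures. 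With that fix your proof goes through, and indeed proves slightly more than the proposition asserts.
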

\begin{proof}
    By Lemma~\ref{lemma:connection_lower_upper_ahlfors_hausdorff}, $d=\dim_H(\supp{T_*P})\geq \dimh{T_*P}$. By Lemma~\ref{lemma:support_pushforward_measure}, given conditions \eqref{cond_A} or \eqref{cond_B}, $\dim_H(T(K))\geq \dimh{T_*P}$. We now want to show that $\dim_H(T(K))\leq \dimh{T_*P}$, so assume that $\inf\limits_{\substack{E\subset\mathcal{B}(\overline{T(K)})\\T_*P(E)=1}}\dim_H(E)<\dim_H(T(K))$, therefore, there exists $E\in\mathcal{B}(\overline{T(K)})$ such that $T_*P(E)=1$ and $\dim_H(E)<\dim_H(T(K))$. Hence $\dim_H(\overline{T(K)}\setminus E)>0$ and thus $\dim_H(\overline{T(K)}\setminus E   )=\dim_H(T(K))$ by Lemma~\ref{lemma:properties_hasudorff_dimension}. Let $F=\overline{T(K)}\setminus E$ and $s=\dim_H(T(K))$. For $\delta>0$, consider $\{ B(x_i,3r_i)\}_{i=1}^n$ a Vitali covering of $F$ with $r_i<\delta$ such that $\{B(x_i,r_i)\}_{1\leq i\leq n}$ are mutually disjoint. By the lower Ahlfors regularity of $T_*P$ in Lemma~\ref{lemma:lower_ahlfors_reg_T_*P}, we have the estimate:
    \begin{align*}
    T_*P(F)=\sum\limits_{i=1}^nT_*P(B(x_i,r_i)\cap F)\geq C\sum\limits_{i=1}^n r_i^s=\frac{C}{3^s}\sum\limits_{i=1}^n (3r_i)^s\geq \frac{C}{3^s}\mathcal{H}_{3\delta}^s(F)
    \end{align*}
    Therefore $T_*P(F)\geq \frac{C}{3^s}\mathcal{H}^s(F)>0$ by the fact that $\dim_H(F)>0$, contradicting the fact that $T_*P(F)=T_*P(\overline{T(K)}\setminus E)=0$. Finally, $\dim_H(\pi_S(K))=\dim_H{T(K)}=\dim_H^*(T_*P)$ if $T$ is surjective. We can establish the results for $\udimqtilde{r}{P;S}$ and $\ldimqtilde{r}{P;S}$ in a similar way as $\tilde{e}_{n,r}(P;S)\leq\hat{e}_{n,r}(P;S)$.
\end{proof}

\section{Examples}\label{section:examples}
\begin{example}[Condition \eqref{cond_A} fails]\label{example:cond_a_fails}
    Suppose $$K=\{(x,y)\in\bbr^2:y=-|x|,|x|\leq 1\},$$ $$S=\{(x,y)\in\bbr^2:|y|=x-1\}\cup\{(x,y)\in\bbr^2:|y|=-x-1\}\cup\{(x,y)\in\bbr^2: y=1+|x|\}.$$
    We can calculate that, $$\pi_S(K)=\{(x,y)\in\bbr^2:|y|=-x-1,-3/2\leq x\leq -1\}\cup \{(x,y)\in\bbr^2:|y|=x-1,1\leq x\leq 3/2\}\cup\{(0,1)\}$$
    We have $\pi_S^{-1}(B((0,1),\varepsilon)\cap S)=\{(0,0)\}$. Let $P$ to be uniform on $K$, then $P(\pi_S^{-1}(\{(0,0)\})=0$.
\end{example}
\begin{figure}[htbp]
\centering
\begin{tikzpicture}[scale=2, >=stealth]
 \draw[semithick, color=gray!50] (-1.6,-1.1) grid (1.6,2.1);
  \draw[->] (-2,0) -- (2,0) node[right] {\small$x$};
  \draw[->] (0,-1.5) -- (0,2.2) node[above] {\small$y$};

  \draw[blue, thick] (-1,-1) -- (0,0) -- (1,-1) node[right, blue] {\small$K$};

  \draw[red, thick, domain=1:2] plot(\x, {\x - 1});
  \draw[red, thick, domain=1:2] plot(\x, {-(\x - 1)});

  \draw[red, thick, domain=-2:-1] plot(\x, {- (\x + 1)});
  \draw[red, thick, domain=-2:-1] plot(\x, {(\x + 1)});

  \draw[red, thick, domain=-1.5:0] plot(\x, {1 - \x});
  \draw[red, thick, domain=0:1.5] plot(\x, {1 + \x});
  \node[red] at (1.55,2.3) {\small$S$};

  \draw[green!70!black, thick] (1,0) -- (1.5,-0.5);
  \draw[green!70!black, thick] (-1,0) -- (-1.5,-0.5);
  \filldraw[green!70!black] (0,1) circle (0.4pt);
  \node[green!70!black, right] at (1.5,-0.5) {\small$\pi_S(K)$};

  \filldraw (0,0) circle (0.4pt);
  \node[below left] at (0,0) {\small$(0,0)$};

  \filldraw (0,1) circle (0.4pt);
  \node[above left] at (0,1) {\small$(0,1)$};

  \draw[->, gray, thick] (0,0) -- (0,1);
\node[align=left] at (2, -2) {
        $P$: Uniform on $K$ \\
        $\dim_H(K) = \dim_H(\pi_S(K)) = 1$
    };
\end{tikzpicture}
\caption{Illustration of $K$, $S$, and $\pi_S(K)$ in Example~\ref{example:cond_a_fails}.}
\label{figure:condition_a_fails}
\end{figure}

\begin{example}[Dirac measure]\label{example:dirac_measure}
    Let $P$ be a Dirac distribution on $K=\{(0,0)\}$ where $S=\{(x,y)\in\mathbb R^2:x^2+y^2=1\}$. Then we can construct $T:\{(0,0)\}\rightarrow \{(\cos\theta,\sin\theta)\}$ for any $\theta\in[0,2\pi)$ with $T(K)=\{(\cos\theta,\sin\theta)\}$. $T_*P$ is another Dirac distribution only defined on $T(K)$, while $\pi_
    S(K)=S$. $\dim_H(T(K))=\dim_H(K)=0$. We may verify that $\pi_S^{-1}(B(x,\varepsilon)\cap S)=\{y\in \{(0,0)\}:\pi_S(y)\in B(x,\varepsilon)\cap S\}=\varnothing$ as $\pi_S^{-1}(\{(0,0)\})=S$, and therefore the condition \eqref{cond_A} does not hold. The condition \eqref{cond_B} does not hold either as, for a randomly selected $\beta_n\subset S\text{ with }|\beta_n|=1$ are a sequence of quantizers, which does not satisfy that $\lim\limits_{n\rightarrow\infty}d_H(\beta_n, \pi_S(K))=0$.
\end{example}
\begin{figure}[htbp]
\begin{center}
\begin{tikzpicture}[scale=2]

    \draw[->, thick] (-1.5, 0) -- (1.5, 0) node[right] {$x$};
    \draw[->, thick] (0, -1.5) -- (0, 1.5) node[above] {$y$};

    \draw[thick, blue!70] (0,0) circle (1);
    \node[blue!70] at (0, 1.15) {$S = \{(x,y) \in \mathbb{R}^2 : x^2 + y^2 = 1\}$};

    \filldraw[red] (0,0) circle (0.03);
    \node[below left, red] at (0,0) {$K = \{(0,0)\}$};

    \coordinate (target) at ({cos(60)},{sin(60)});
    \filldraw[red] (target) circle (0.03);
    \node[above right, red] at (target) {$T(K) = (\cos\theta, \sin\theta)$};

    \draw[->, thick, red] (0,0) -- (target) node[midway, below left] {$T$};

    \node[blue] at (-1.1,-1.2) {
        $\pi_S(K) = S$
    };

    \node[align=left] at (1.3, -1.15) {
        $P$: Dirac at $K = \{(0,0)\}$ \\
        $T_*P$: Dirac at $T(K)$ \\
        $\dim_H(K) = \dim_H(T(K)) = 0$
    };

\end{tikzpicture}
\end{center}
\caption{Illustration of $K$, $S$, and $\pi_S(K)$ in Example~\ref{example:dirac_measure}}
\end{figure}
\begin{example}[Uniform on unit circle]\label{example:uniform_unit_circle}
    Let $P$ be a uniform distribution on $K=\{(x,y)\in\mathbb R^2:x^2+y^2=1\}$ with $S=\{(x,y)\in\mathbb R^2:x^2+y^2\leq \frac{1}{4}\}$. Then $\pi_S(K)=T(K)=\partial S=\{(x,y)\in \mathbb R^2:x^2+y^2=\frac{1}{4}\}$. We can construct $T:K\rightarrow \partial S,(x,y)\mapsto (x/2,y/2)$. $T_*P$ will be the uniform distribution on $\partial S$. For sufficiently small $\varepsilon>0$ and $x\in\pi_S(K)$ we can verify that $\pi_S^{-1}(B(x,\varepsilon)\cap S)\supseteq B(y,c\varepsilon)\cap K$ for some $c>0$, and therefore conditions \eqref{cond_A} and \eqref{cond_D} hold. The condition \eqref{cond_D} also holds as the uniform distribution on $K$ is lower Ahlfors regular of dimension 1.
\end{example}
\begin{figure}
\begin{center}
    \begin{tikzpicture}[scale=2]
\centering
    \draw[->, thick] (-1.5, 0) -- (1.5, 0) node[right] {$x$};
    \draw[->, thick] (0, -1.5) -- (0, 1.5) node[above] {$y$};

    \draw[thick, red] (0,0) circle (1);
    \node[red] at (0, 1.2) {$K = \{(x,y) \in \mathbb{R}^2 : x^2 + y^2 = 1\}$};

    \draw[thick, fill=orange!100] (0,0) circle (0.5);
    \node[orange!100] at (0, -0.7) {$S = \{(x,y) \in \mathbb{R}^2 : x^2 + y^2 \leq \frac{1}{4}\}$};

    \draw[thick,  blue] (0,0) circle (0.5);
    \node[blue] at (0.7, -0.5) {$\pi_S(K) = T(K) = \partial S$};

    \draw[->, thick, red] (0.707, 0.707) -- (0.3535, 0.3535);
    \node[red] at (0.6, 0.5) {$T$};

    \filldraw (0.707, 0.707) circle (0.02);
    \filldraw (0.3535, 0.3535) circle (0.02);
    \node[above right] at (0.707, 0.707) {$(x, y)$};
    \node[below left] at (0.3535, 0.3535) {$\left(\frac{x}{2}, \frac{y}{2}\right)$};

    \node[align=left] at (-1.5, -1.2) {
        $P$: Uniform distribution on $K$ \\
        $T_*P$: Uniform distribution on $\partial S$
    };
\end{tikzpicture}
\end{center}
\caption{Illustration of $K$, $S$, and $\pi_S(K)$ in Example~\ref{example:uniform_unit_circle}}
\end{figure}
\begin{example}[Middle-third Cantor]
    This example was given in \cite{pandey2024constrained}. Let $P$ be the Cantor distribution on the Cantor set $\mathcal{C}$ on $\{(x,y)\in\mathbb R^2: x\in [0,1],y=0\}$. Let $S=\{(x,y)\in\mathbb R^2:y=1\}$, then $\pi_S(\mathcal{C})=\{(x,y)\in \mathbb R^2:(x,y-1)\in\mathcal{C}\}$. There exists a surjective map $T:\mathcal{C}\rightarrow\pi_S(\mathcal{C}), (x,y)\mapsto (x,y+1)$. Therefore $\dim_H{\pi_S(\mathcal{C})}=\dim_H(T(\mathcal{C}))=\dimh{T_*\mathcal{C}}=\dim_H(\mathcal{C})=\dfrac{\log 2}{\log 3}$. 
\end{example}
\section{Discussion}
\subsection{Summary}\label{section:summary}
In this article, we investigate the asymptotics of the lower and upper quantization dimensions in the \textit{constrained} cases. In Section~\ref{section:preliminary}, we collect the backbone: existence, monotonicity and limit behaviour of constrained errors, extended from their classical (unconstrained) counterparts. For the quantitative work that follows we rely on a generalized metric projection. It furnishes a measurable transport map \(T:K\!\to\!S\) and the push‑forward measure \(T_*P\), which encode geometry and probability in one object.
Then we split the \emph{constrained} quantization problem into two
decoupled tasks.

\begin{enumerate}
    \item[(U)] \textbf{Upper bounds.}  
          We ``pull back’’ each $x\in K$ to its projection
          $T(x)\in\pi_S(K)$ and compare the \emph{constrained} error with the
          classical, unconstrained one on $\pi_S(K)$.   
          Two key tools make this work:
          \begin{itemize}
              \item the triangle‐type inequalities hidden in
                    $\rho(x,a)\le \rho(x,\pi_S(x))+\rho(\pi_S(x),a)$.
          \end{itemize}
          Together with a sharpened covering argument
          (Lemma~\ref{lemma:upperbound_limsup_n_e_n_infty_D_prime}) we get
          \[
              \limsup_{n\to\infty} n\,\bigl(
                    e_{n,r}(P;S)-e_{\infty,r}(P;S)
                \bigr)^{d}<\infty ,
          \]
          by the comparison results between dimensions, the \emph{upper} constrained quantization dimension never
          exceeds the upper box dimension of $\pi_S(K)$
          (Proposition~\ref{prop:estimate_upper_lower_constrained_quantization_dim}).

    \item[(L)] \textbf{Lower bounds.}  
      Section~\ref{section:lower_quantization_dimension_estimate} follows a different route than the “transport trick’’ hinted
      at above.  
      We \emph{force} every $n$–quantizer $\alpha$ to live \emph{outside}
      $\pi_S(K)$—more precisely inside the thin shell
      $S\setminus\pi_S(K)$—and then measure how much additional cost this
      incurs.  The key ingredient is a \emph{weight function} in \eqref{eq:weight}
      which converts the difference of $r$‑powers into a product:
      \[
          \rho(x,a)-\rho\!\bigl(x,\pi_S(x)\bigr)
          \approx w(x)\,\rho\!\bigl(\pi_S(x),a\bigr),
      \]

      We first suppose the quantizers satisfy $\alpha_n\subset S\setminus\pi_S(K)$, and use the construction in \eqref{eq:set_F_delta}  with the assist of Lemma~\ref{lemma:delta_naught} to produce a \textit{uniform} lower bound in Lemma~\ref{lemma:conditional_lower_bound}. We then relax the requirements in Lemma~\ref{lemma:conditional_lower_bound} to the condition \eqref{eq:isolated_points}.

      Now assume $T_*P$ is \emph{lower Ahlfors $d$–regular}
      (condition~\eqref{cond_D}).  Covering theory (again
      Lemma~\ref{lemma:upperbound_limsup_n_e_n_infty_D_prime}) gives
      \[
          \ldimqhat{r}{P;S}
          \;\ge\; d
          \;=\;\dim_H\!\bigl(\pi_S(K)\bigr).
      \]
\end{enumerate}
The projection map is a serendipity and a powerful
device which is good enough to reduce the upper asymptotics in full generality, and to
capture the lower one whenever $T$ is even mildly regular. Removing the remaining geometric obstructions now seems a matter of refined
analysis rather than fundamentally new ideas.
\subsection{Future Directions}\label{section:further_directions}
\begin{enumerate}
    \item[\textbf{(i)}] \textbf{Strictly convex or smooth $S$.}  
          If $S$ is strictly convex, or has a
          $C^{2,2}$ boundary with positive reach/curvature bounds, the metric
          projection is expected to be bi‑Lipschitz on the whole of
          $K$.  
          Lemma~\ref{lemma:pi_s_k_single_valued} already shows that a
          \emph{bi‑Lipschitz} $T$ would force
          $\dim_H T(K)=\dim_H K$, immediately upgrading the lower bounds
          in Section~\ref{section:perturbative}.  
          in terms of the curvature and smooth conditions of $\partial S$—is an attractive next
          step.

    \item[\textbf{(ii)}] \textbf{Nowhere–dense constraints.}  
          Our present lower‑bound machinery collapses once
          $\pi_S(K)$ is nowhere dense.  
          We conjecture that for sufficiently \emph{regular} sets $S$ (think
          rectifiable curves, $C^1$ surfaces) one still has a uniform
          bound based on 
          \(
              \int \rho(x,y)-\rho\!\bigl(x,\pi_S(x)\bigr)dP(x)\gtrsim
              \int \rho(y,\pi_S(K)\bigr)dT_*P(y)
          \)
          for $x\in K$, $y\in S$.  
          At the moment this remains open.

    \item[\textbf{(iii)}] \textbf{$\sigma$–compact measures.}  
          The “compact support" assumption is mainly a convenience. Because every finite Borel measure on a \(\sigma\)-compact metric space is
\emph{inner regular} (tight), we can exhaust the space by an increasing
sequence of compacts \(K_1\Subset K_2\Subset\dots\) with
\(P(K_m)\rightarrow1\).
All the estimates in Section~\ref{section:upper_quantization_dimension_estimate} and~\ref{section:lower_quantization_dimension_estimate} are local, and we may define a sequence of measures 
\(P_m:=P|_{K_m}/P(K_m)\), and using monotone–convergence arguments transfers
the bounds to the original \(P\).
Hence  upper–/lower–bound results might be extend unchanged to any finite Borel
measure whose support is merely \(\sigma\)-compact.
\end{enumerate}






\section*{Acknowledgement} The author would like to thank Professor Mrinal Kanti Roychowdhury for helpful suggestions on an earlier stage of this project.

\appendix

\section{Supplementary Lemmas and Proofs}\label{app_supp_lemmas_proofs}
The following lemma states some basic properties about Hausdorff dimensions from \cite{falconer2013fractal}:
\begin{lemma}[Some properties on $\dim_H$]\label{lemma:properties_hasudorff_dimension}
    For $\{A_i\}_{i\geq1}$ a sequence of sets in $\bbrD$, and $A\subseteq B\subseteq\bbrD$:
    \begin{itemize}
        \item $\dim_H(\bigcup\limits_{i=1}^\infty A_i)=\sup\limits_{1\leq i<\infty}\dim_H(A_i)$;
        \item If $\dim_H(A)>0$, then $\diam(A)>0$;
        \item If $\dim_H(A)<\dim_H(B)$, then $\dim_H(A\setminus B)>0$
    \end{itemize}
\end{lemma}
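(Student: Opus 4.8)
The plan is to deduce all three items from two elementary structural properties of the Hausdorff outer measures $\mathcal{H}^s$: \emph{monotonicity} ($E\subseteq F\Rightarrow\mathcal{H}^s(E)\le\mathcal{H}^s(F)$) and \emph{countable subadditivity} ($\mathcal{H}^s(\bigcup_i E_i)\le\sum_i\mathcal{H}^s(E_i)$). Both are immediate from the definition $\mathcal{H}^s(E)=\lim_{\delta\to0}\inf\{\sum_i\diam(U_i)^s:E\subseteq\bigcup_iU_i,\ \diam U_i\le\delta\}$ and $\dim_H(E)=\inf\{s\ge0:\mathcal{H}^s(E)=0\}$: a $\delta$-cover of $F$ restricts to a $\delta$-cover of $E$, and near-optimal $\delta$-covers of the $E_i$ concatenate to a $\delta$-cover of $\bigcup_iE_i$. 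Monotonicity of $\mathcal{H}^s$ at once gives monotonicity of $\dim_H$.

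For the first item I would prove the two inequalities separately. The bound $\dim_H(\bigcup_iA_i)\ge\sup_i\dim_H(A_i)$ is just monotonicity of $\dim_H$ applied to each inclusion $A_i\subseteq\bigcup_jA_j$. For the reverse, fix any $s$ strictly above $\sup_i\dim_H(A_i)$; then $\mathcal{H}^s(A_i)=0$ for every $i$, so countable subadditivity forces $\mathcal{H}^s(\bigcup_iA_i)=0$, i.e.\ $\dim_H(\bigcup_iA_i)\le s$, and then let $s$ decrease to $\sup_i\dim_H(A_i)$. For the second item I would argue by contraposition: $\diam(A)=0$ means $A$ is empty or a single point, and a point is covered by one set of diameter $0<\delta$ for every $\delta>0$, giving $\mathcal{H}^s_\delta(A)=0$ for all $s>0$, hence $\dim_H(A)=0$; so $\dim_H(A)>0$ forces $\diam(A)>0$.

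The third item is a corollary of the first. Since $A\subseteq B$ we have $B=A\cup(B\setminus A)$, so the two-set case of countable stability gives $\dim_H(B)=\max\{\dim_H(A),\dim_H(B\setminus A)\}$; if $\dim_H(A)<\dim_H(B)$ the maximum cannot be attained at $\dim_H(A)$, hence $\dim_H(B\setminus A)=\dim_H(B)>\dim_H(A)\ge0$, which is in particular positive. I expect no real mathematical obstacle here; the only wrinkle is bookkeeping: as printed, the third bullet refers to $\dim_H(A\setminus B)$, which under the standing hypothesis $A\subseteq B$ equals $\dim_H(\varnothing)=0$ and is vacuous. The statement actually invoked in Section~\ref{section:lower_quantization_dimension_estimate} (in the form ``$\dim_H(E)<\dim_H(\overline{T(K)})$ implies $\dim_H(\overline{T(K)}\setminus E)>0$'') is the one for $B\setminus A$, which is exactly what the argument above establishes, so this is the version I would state and prove.
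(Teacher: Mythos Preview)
Your proposal is correct and in fact goes well beyond what the paper does: the paper gives no proof of this lemma at all, merely citing \cite{falconer2013fractal} for these standard facts. Your arguments via monotonicity and countable subadditivity of $\mathcal{H}^s$ are the standard ones and are exactly what one finds in Falconer. You are also right to flag the typo in the third bullet: as written, $A\setminus B=\varnothing$ under the hypothesis $A\subseteq B$, and the application in Section~\ref{section:lower_quantization_dimension_estimate} indeed requires the statement for $B\setminus A$, which is what your decomposition $B=A\cup(B\setminus A)$ together with countable stability proves.
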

The following lemma states the connections between Ahlfors regularity and the Hausdorff dimension:
\begin{lemma}\label{lemma:connection_lower_upper_ahlfors_hausdorff}
    Let $X$ be a seperable metric space. Suppose for $E\subset X$ is a closed set in $X$ upon which there exists a positive measure $\eta:\mathcal{B}(E)\rightarrow[0,\infty]$, then
    \begin{itemize}
        \item Suppose $\eta$ is lower Ahlfors regular of dimension $s$, that is, there exists $c,r_0>0$, for any $x\in E$ and $r<r_0$ we have $\eta(B(x,r)\cap S)\geq cr^s$. Then $s\geq\dim_H(E)$
        \item Suppose $\eta$ is upper Ahlfors regular of dimension $s$, that is, there exists $c,r_0>0$, for any $x\in E$ and $r<r_0$ we have $\eta(B(x,r)\cap S)\leq cr^s$. Then $s\leq \dim_H(E)$.
    \end{itemize}
\end{lemma}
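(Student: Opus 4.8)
The plan is to establish the two bullets separately, each by a short covering argument of classical mass–distribution type; this is essentially textbook material (cf.\ \cite{falconer2013fractal}), so I foresee no essential difficulty, only a minor bookkeeping point about finiteness of $\eta$.

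\textbf{Lower regularity forces $\dim_H(E)\le s$.} Fix $0<r<r_0$. Using separability of $X$, choose a maximal $r$-separated subset $\{x_1,\dots,x_N\}\subseteq E$. Maximality makes $\{B(x_i,r)\}_{i\le N}$ a cover of $E$, while $\{B(x_i,r/2)\}_{i\le N}$ are pairwise disjoint; since each $B(x_i,r/2)$ meets $E$ in a set of $\eta$-mass at least $c(r/2)^s$, disjointness gives $N\,c\,(r/2)^s\le\eta(E)$, so the covering number (minimal number of $r$-balls covering $E$) obeys $N_r(E)\le 2^s c^{-1}\eta(E)\,r^{-s}$. Hence for every $\varepsilon>0$,
\[
\mathcal H^{s+\varepsilon}_{2r}(E)\;\le\; N_r(E)\,(2r)^{s+\varepsilon}\;\le\; 2^{\,2s+\varepsilon}\,c^{-1}\eta(E)\,r^{\varepsilon}\;\longrightarrow\;0
\]
as $r\to0$, so $\mathcal H^{s+\varepsilon}(E)=0$ for every $\varepsilon>0$ and therefore $\dim_H(E)\le s$. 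Equivalently, the bound on $N_r(E)$ says exactly $\overline{\dim}_B(E)\le s$, and one always has $\dim_H(E)\le\overline{\dim}_B(E)$.

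\textbf{Upper regularity forces $\dim_H(E)\ge s$.} This is the mass distribution principle applied to $\eta$ itself, with $\eta(E)>0$ because $\eta$ is a nonzero positive measure. Let $\{U_i\}$ be any countable cover of $E$ with $\diam(U_i)<\min(\delta,r_0)$. Discard the $U_i$ that miss $E$; for each remaining one pick $x_i\in U_i\cap E$, so $U_i\subseteq B(x_i,\diam(U_i))$ and thus $\eta(U_i\cap E)\le\eta(B(x_i,\diam(U_i))\cap E)\le c\,\diam(U_i)^s$. Countable subadditivity then gives
\[
0<\eta(E)\;\le\;\sum_i\eta(U_i\cap E)\;\le\; c\sum_i\diam(U_i)^s ,
\]
and taking the infimum over all such covers yields $\mathcal H^s_\delta(E)\ge c^{-1}\eta(E)$ for every small $\delta$; letting $\delta\to0$ gives $\mathcal H^s(E)\ge c^{-1}\eta(E)>0$, so $\dim_H(E)\ge s$.

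The one point needing a word of care — and the closest thing to an obstacle — is the reduction, in the first bullet, to the situation $\eta(E)<\infty$: if $\eta$ takes the value $+\infty$ on $E$, fix $x_0\in E$, run the argument on $E\cap\overline{B}(x_0,R)$ for each $R>0$, and combine via $\dim_H(E)=\sup_{R}\dim_H\!\big(E\cap\overline{B}(x_0,R)\big)$ from Lemma~\ref{lemma:properties_hasudorff_dimension}. This requires $\eta$ finite on bounded sets, which holds in every application in this article, where $\eta=T_*P$ is a probability measure; so in practice one simply takes $\eta$ finite from the outset and the issue disappears.
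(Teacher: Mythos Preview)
Your proof is correct and follows essentially the same classical mass--distribution strategy as the paper: for the second bullet you and the paper both run the mass distribution principle verbatim, and for the first bullet you both bound $\mathcal H^s(E)$ by exploiting a disjoint family of balls on which lower regularity gives a uniform mass lower bound. The only technical difference is that the paper extracts its disjoint family via the $5r$--Vitali covering lemma, whereas you obtain it from a maximal $r$--separated set (and then phrase the conclusion via $\overline{\dim}_B$); your route is marginally more elementary in that it avoids invoking Vitali, and your remark about reducing to $\eta(E)<\infty$ is a point the paper leaves implicit.
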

\begin{proof}
    For the first argument, by the general Vitali covering lemma, there exists a sequence of balls $\{B(x_i,r_i)\}_{i=1}^\infty$ with $r_i<r_0$ such that $\{B(x_i,5r_i)\}_{i=1}^\infty$ cover $E$ while $B(x_i,x_j)$ are mutually disjoint.
    Therefore
    \begin{align*}
        \eta(E)\geq \sum\limits_{i=1}^\infty \eta(B(x_i,r_i)\cap E)\geq c\sum\limits_{i=1}^\infty r_i^s
    \end{align*}
    and
    \begin{align*}
        \mathcal{H}_\delta^s(E)=\inf\limits_{\substack{E\subseteq \bigcup\limits_{i=1}^\infty U_i\\ \diam(U_i)<\delta}}\sum\limits_{i=1}^\infty \diam(U_i)^s\leq 5^s\inf\limits_{\substack{E\subseteq \bigcup\limits_{i=1}^\infty B(x_i,5r_i)\\5r_i<r}}\sum\limits_{i=1}^\infty r_i^s\leq 5^sc\eta(E)
    \end{align*}
    thus $\mathcal{H}^s(E)\leq 5^sc\eta(E)$ which implies that $s\geq \dim_H(E)$. For the second argument, for sufficiently small $\delta>0$, we have
    \begin{align*}
        \mathcal{H}_\delta^s(E)=\inf\limits_{\substack{E\subseteq \bigcup\limits_{i=1}^\infty U_i\\ \diam(U_i)<\delta}}\sum\limits_{i=1}^\infty \diam(U_i)^s\geq c\sum\limits_{i=1}^\infty\eta(B(x_i,\diam(U_i))\cap S)\geq c\sum\limits_{i=1}^\infty \eta(U_i\cap S)\geq c\eta(E)
    \end{align*}
    where $x_i$ are arbitrarily selected from $U_i$. This implies that $s\leq \dim_H(E)$.
\end{proof}

In the following lemma we rephrase the conditions of Lemma~\ref{lemma:asymp_e_n_infty_regular_pi_k} in terms of the Hausdorff distance when $r=\infty$, if we consider the case when $S\Subset K\Subset \bbrD$ and thus $\pi_S(K)=S$. Briefly speaking, if the n-optimal sets $\alpha_n\in C_{n,\infty}(K)$ evenly spread over $K$ and approximate $K$ in Hausdorff's distance as $n$ becomes infinity, the condition of Lemma~\ref{lemma:asymp_e_n_infty_regular_pi_k} is then satisfied.
\begin{lemma}
\label{lemma:condtn_lem_asymp_e_n_infty}
    Suppose conditions \eqref{cond_A} or \eqref{cond_B} are satisfied and $\supp{P}=K$ is compactly. Let $\alpha_n\in C_{n,\infty}(K;\pi_S(K))$ be the attained n-optimal set, if there exists a sequence of $\{\alpha_n\}_n$ satisfies $\lim\limits_{n\rightarrow\infty}d_H(\alpha_n,\pi_S(K))=0$, then $$d_H(\pi_S(K),K)\leq \lim\limits_{n\rightarrow\infty}\inf\limits_{\substack{\alpha\subseteq S\\ |\alpha|\leq n}}d_H(\alpha,K)$$
\end{lemma}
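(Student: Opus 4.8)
The goal is to verify the single inequality $d_H(\pi_S(K),K)\le\lim_{n\to\infty}\inf_{\alpha\subseteq S,\,|\alpha|\le n}d_H(\alpha,K)$, and the plan is to reduce it to the elementary identity
\[
d_H\bigl(\pi_S(K),K\bigr)=\sup_{x\in K}\rho(x,S).
\]
By Lemma~\ref{lemma:e_n_infty_and_hausdorff_10.4} the inner infimum equals $e_{n,\infty}(K;S)$, so the right‑hand side of the claim is $e_{\infty,\infty}(K;S)$; however the bound will in fact fall out of the identity above without needing that reformulation.

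First I would prove the identity by splitting the Hausdorff distance into its two one‑sided halves. For the half ``$K$ lies within $\sup_{x\in K}\rho(x,S)$ of $\pi_S(K)$'': for each $x\in K$ the set $\pi_S(x)$ is nonempty (the closed set $S$ has a nearest point to $x$) and $\pi_S(x)\subseteq\pi_S(K)\subseteq S$, so $\rho(x,S)=\rho(x,\pi_S(x))\ge\rho(x,\pi_S(K))\ge\rho(x,S)$, i.e.\ $\rho(x,\pi_S(K))=\rho(x,S)$ for every $x\in K$, whence $\sup_{x\in K}\rho(x,\pi_S(K))=\sup_{x\in K}\rho(x,S)$. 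For the half ``$\pi_S(K)$ lies within the same quantity of $K$'': any $y\in\pi_S(K)$ equals $\pi_S(x_0)$ for some $x_0\in K$, whence $\rho(y,K)\le\rho(y,x_0)=\rho(x_0,S)\le\sup_{x\in K}\rho(x,S)$, and the supremum over $y$ shows this half is dominated by the first. Thus the $\max$ defining $d_H(\pi_S(K),K)$ equals $\sup_{x\in K}\rho(x,S)$, which is finite since $x\mapsto\rho(x,S)$ is continuous on the compact $K$ (and $\pi_S(K)$ is compact by Lemma~\ref{compactness_pi_s_k}).

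Granting the identity, the inequality is immediate: for every $n$ and every admissible $\alpha\subseteq S$ with $|\alpha|\le n$ we have $d_H(\alpha,K)\ge\sup_{x\in K}\rho(x,\alpha)\ge\sup_{x\in K}\rho(x,S)$, the last step because $\alpha\subseteq S$ forces $\rho(x,\alpha)\ge\rho(x,S)$ pointwise. Taking the infimum over such $\alpha$ and then the limit in $n$ gives $e_{\infty,\infty}(K;S)\ge\sup_{x\in K}\rho(x,S)=d_H(\pi_S(K),K)$, as claimed. A second, slightly more quantitative derivation uses the stated hypothesis together with the triangle inequality for $d_H$: since $d_H(\pi_S(K),K)\le d_H(\alpha_n,K)+d_H(\alpha_n,\pi_S(K))$ with the last term tending to $0$ (this is exactly what \eqref{cond_A} or \eqref{cond_B} buy through Lemma~\ref{lemma:approximation}), and since $\alpha_n\subseteq\pi_S(K)$ forces $d_H(\alpha_n,K)=\sup_{x\in K}\rho(x,\alpha_n)=e_{n,\infty}(K;\pi_S(K))$ (the ``centers‑away‑from‑$K$'' term being dominated, again by the identity above), one concludes $d_H(\pi_S(K),K)\le\lim_n e_{n,\infty}(K;\pi_S(K))=\sup_{x\in K}\rho(x,S)$.

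I do not expect a genuine obstacle. The only points needing care are keeping the two covering radii distinct — the one constrained to $S$ and the one constrained to $\pi_S(K)$, which merely happen to share the limit $\sup_{x\in K}\rho(x,S)$ — and recognizing that the ``$\pi_S(K)$ is covered by $K$'' half of the Hausdorff distance is controlled for free by the very definition of the metric projection, so that no independent lower bound on that term is ever required.
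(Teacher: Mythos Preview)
Your proof is correct, and in fact your first (identity-based) argument is cleaner and more elementary than the paper's. You establish $d_H(\pi_S(K),K)=\sup_{x\in K}\rho(x,S)$ directly from the definition of the metric projection, and then the inequality $d_H(\alpha,K)\ge\sup_{x\in K}\rho(x,\alpha)\ge\sup_{x\in K}\rho(x,S)$ for every $\alpha\subseteq S$ finishes the job in one line. Notably, this route never uses the hypotheses \eqref{cond_A}, \eqref{cond_B}, or the convergence $d_H(\alpha_n,\pi_S(K))\to 0$; the inequality holds unconditionally once $S$ is closed and $K$ compact.

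The paper instead argues through a pointwise triangle-type bound $|\rho(y,A)-\rho(y,B)|\le d_H(A,B)$: choosing a point $y$ that realizes $d_H(\alpha_n,K)$, it chains inequalities to obtain $d_H(\alpha_n,K)-d_H(\pi_S(K),K)\ge -d_H(\alpha_n,\pi_S(K))$, and then invokes the assumed convergence of $\alpha_n$ to $\pi_S(K)$. This is morally your ``second, slightly more quantitative derivation'' via the triangle inequality for $d_H$, but written out coordinate-wise and relying on the particular $y$; it genuinely consumes the hypothesis, whereas your identity shows the hypothesis is superfluous for this lemma. What your approach buys is transparency and a strictly stronger statement; what the paper's buys is nothing extra here, though the same style of estimate recurs elsewhere in the paper. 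One small remark on your second derivation: after reaching $d_H(\pi_S(K),K)\le\lim_n e_{n,\infty}(K;\pi_S(K))$ you still need the identity (or the observation $\lim_n e_{n,\infty}(K;\pi_S(K))=\sup_{x\in K}\rho(x,S)=\lim_n e_{n,\infty}(K;S)$) to connect to the $S$-constrained limit on the right-hand side, since $\pi_S(K)\subseteq S$ gives the inequality in the wrong direction; so the second route is not truly independent of the first.
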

\begin{proof}
    For any $x\in K'\cup K$ we have $|\rho(x,A)-\rho(x,B)|\leq d_H(A,B)$ for any nonempty sets $A\subseteq K'\cup K$ and $B\subseteq K'\cup K$. Here, $K'$ is compact with $\pi_S(K)\subseteq K'$. $K'$ also satisfies that $\alpha_n\subseteq K'$ for $n\geq N$ for some $N\in\mathbb N^{+}$ as $\lim\limits_{n\rightarrow\infty}d_H(\alpha_n,\pi_S(K))=0$. Furthermore, for some $y\in K'\cup K$ the equality can be achieved for $A=\alpha_n$ and $B=K$. Therefore,
    \begin{align*}
        d_H(\alpha_n,K)-d_H(\pi_S(K),K)&\geq|\rho(y,\alpha_n)-\rho(y,K)|-d_H(\pi_S(K),K)\\ 
        &=|\rho(y,\alpha_n)-\rho(y,\pi_S(K))+\rho(y,\pi_S(K))-\rho(y,K)|-d_H(\pi_S(K),K)\\ 
        &\geq-|\rho(y,\alpha_n)-\rho(y,\pi_S(K))|+|\rho(y,\pi_S(K))-\rho(y,K)|-d_H(\pi_S(K),K)\\
        &= -|\rho(y,\alpha_n)-\rho(y,\pi_S(K))|\geq-\sup\limits_{x\in K}|\rho(x,\alpha_n)-\rho(x,\pi_S(K))|\\
        &=-d_H(\alpha_n,\pi_S(K)) >-\varepsilon
    \end{align*}
    followed by a limiting argument. The second equality above is due to we set $|\rho(x,\pi_S(K))-\rho(x,K)|=d_H(\pi_S(K),K)$
\end{proof}
The following lemma is a direct result of the measure theory by the continuity of the positive measure. 
\begin{lemma}\label{lemma:delta_naught}
    For $F_\delta$ constructed in \eqref{eq:set_F_delta}, Suppose for a Borel probability $P$ we have $P(\bigcup\limits_{\delta>0}F_\delta)>0$, then there exists some $\delta_0>0$ such that $P(F_{\delta_0})>0$.
\end{lemma}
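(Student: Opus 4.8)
The plan is to reduce the statement to the continuity of a probability measure along an increasing sequence of Borel sets. First I would record the elementary monotonicity of the family $\{F_\delta\}_{\delta>0}$: since $w_\lambda$ is valued in $\bbr^+\cup\{0\}$, whenever $0<\delta_1\le\delta_2$ we have $\{x\in K:w_\lambda(x,y)\ge\delta_2\}\subseteq\{x\in K:w_\lambda(x,y)\ge\delta_1\}$ for every $y\in S'$, and intersecting over $y\in S'$ gives $F_{\delta_2}\subseteq F_{\delta_1}$. Hence for each $\delta>0$ one may choose $m\in\mathbb N^+$ with $1/m\le\delta$, so that $F_\delta\subseteq F_{1/m}$; this yields $\bigcup_{\delta>0}F_\delta=\bigcup_{m=1}^\infty F_{1/m}$, an \emph{increasing} union as $m\to\infty$.

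Next I would verify measurability so that $P(F_\delta)$ is meaningful. For fixed $y$ the map $x\mapsto w_\lambda(x,y)$ is continuous on $K$ (as noted after \eqref{eq:weight}), so $\{x\in K:w_\lambda(x,y)\ge\delta\}$ is closed in $K$; an arbitrary intersection of closed sets is closed, hence $F_\delta$ is a closed, in particular Borel, subset of $K$ — and this is true even when $S'=S\setminus\pi_S(K)$ is uncountable. Thus each $F_{1/m}$ is $P$-measurable.

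The conclusion then follows from continuity of $P$ from below: $P\bigl(\bigcup_{m\ge1}F_{1/m}\bigr)=\lim_{m\to\infty}P(F_{1/m})$. By hypothesis the left-hand side equals $P\bigl(\bigcup_{\delta>0}F_\delta\bigr)>0$, so the nondecreasing sequence $\{P(F_{1/m})\}_m$ has a strictly positive limit; hence $P(F_{1/m_0})>0$ for some $m_0\in\mathbb N^+$, and $\delta_0:=1/m_0$ is the desired value.

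There is essentially no serious obstacle in this argument; the only point deserving explicit care is the measurability of $F_\delta$ when $S'$ is uncountable, which is handled by observing that each defining set $\{x\in K:w_\lambda(x,y)\ge\delta\}$ is closed and that closedness is preserved under arbitrary intersections, so no appeal to a countable exhaustion of $S'$ is needed.
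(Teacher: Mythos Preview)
Your proposal is correct and follows exactly the route the paper indicates: the paper simply records that the lemma ``is a direct result of the measure theory by the continuity of the positive measure,'' and your argument spells this out by passing to the countable increasing family $\{F_{1/m}\}_{m\ge1}$ and invoking continuity from below, with the added care of checking that each $F_\delta$ is closed (hence Borel) even when $S'$ is uncountable.
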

The following lemma is adjunctive for Lemma~\ref{lemma:conditional_lower_bound} and can be found in \cite[Section 12]{graf}
\begin{lemma}\label{lemma:auxillary_conditional_lower_bound}
    Let $\mu$ be a finite Borel measure on $\bbrD$. Assume that there is some $c>0$ and some $r_0>0$ such that $\mu(B(a,r))\leq cr^D$ for all $a\in\supp{\mu}$ and all $r\in (0,r_0)$, then there exists some constant $c'$ dependent only to $\mu$ and $D$, such that,
    \[\int_B \rho(x,a)d\mu(x)\geq c'\mu(B)^{1+1/D},\]
    for all $a\in\bbrD$ and all $B\in\mathcal{B}(\bbrD)$.
\end{lemma}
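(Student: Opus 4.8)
The plan is to estimate the integral from below by throwing away the contribution of a small ball centred at $a$ — the only region where $\rho(\cdot,a)$ is small — and to control how much $\mu$‑mass that ball can carry using the hypothesised upper regularity. Write $m:=\mu(B)$; if $m=0$ there is nothing to prove, so assume $m>0$, and record that $m\le M:=\mu(\bbrD)<\infty$ because $\mu$ is finite. The exponent $1+1/D$ is exactly what a $D$‑dimensional model forces: for $B=B(a,t)$ with $\mu(B)\asymp t^{D}$ one has $\int_{B}\rho(x,a)\,d\mu\asymp t\cdot t^{D}=m^{1+1/D}$, so the argument is really about making this heuristic uniform.

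First I would upgrade the regularity bound $\mu(B(a,r))\le cr^{D}$ from points of $\supp{\mu}$ to arbitrary $a\in\bbrD$. If $B(a,\rho)\cap\supp{\mu}=\varnothing$ then $\mu(B(a,\rho))=0$; otherwise pick $b\in B(a,\rho)\cap\supp{\mu}$, note $B(a,\rho)\subseteq B(b,2\rho)$, and for $2\rho<r_0$ conclude $\mu(B(a,\rho))\le c(2\rho)^{D}=2^{D}c\,\rho^{D}$. Thus, writing $C:=2^{D}c$, we have $\mu(B(a,\rho))\le C\rho^{D}$ for every $a\in\bbrD$ and every $\rho\in(0,r_0/2)$.

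Next, for any $t\in(0,r_0/2)$ I split
\[
\int_{B}\rho(x,a)\,d\mu(x)\;\ge\;\int_{B\setminus B(a,t)}\rho(x,a)\,d\mu(x)\;\ge\;t\,\mu\bigl(B\setminus B(a,t)\bigr)\;\ge\;t\bigl(m-Ct^{D}\bigr).
\]
The function $t\mapsto t(m-Ct^{D})$ is maximised over $t>0$ at $t_{*}=\bigl(m/(C(D+1))\bigr)^{1/D}$ with value $\tfrac{D}{D+1}\bigl(C(D+1)\bigr)^{-1/D}m^{1+1/D}$. If $t_{*}<r_0/2$ this choice of $t$ is admissible and gives the desired bound with $c'=\tfrac{D}{D+1}\bigl(C(D+1)\bigr)^{-1/D}$. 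If instead $t_{*}\ge r_0/2$, then $m\ge C(D+1)(r_0/2)^{D}$, which is a lower bound on $m$ depending only on $\mu$ and $D$; in that regime take $t=r_0/4$, check $Ct^{D}\le m/2$ (this reduces to $D+1\ge 2^{1-D}$, true for $D\ge1$), and obtain
\[
\int_{B}\rho(x,a)\,d\mu(x)\;\ge\;\frac{r_0}{4}\cdot\frac{m}{2}\;=\;\frac{r_0}{8}\,m\;\ge\;\frac{r_0}{8M^{1/D}}\,m^{1+1/D},
\]
using $m\le M$ in the last step. Taking $c'$ to be the minimum of the two constants finishes the proof.

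The only obstacle is bookkeeping rather than ideas: one must make $\mu(B(a,\rho))\le C\rho^{D}$ hold for $a$ off $\supp{\mu}$ (handled by the doubling trick above), and one must treat the regime where $\mu(B)$ is so large that the optimal excision radius would exceed the scale $r_0$ on which regularity is available — there finiteness of $\mu$ is used to trade a bare factor $m$ for $m^{1+1/D}$. Both points are routine, so this is essentially the argument of \cite[Section 12]{graf} reproduced with explicit constants.
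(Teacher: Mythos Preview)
Your proposal is correct and is precisely the standard argument from \cite[Section~12]{graf} that the paper cites in lieu of its own proof: excise a ball $B(a,t)$, use the upper regularity to bound its mass, optimise in $t$, and handle the large-mass regime with the finiteness of $\mu$. Since the paper does not supply an independent proof for this lemma, there is nothing further to compare.
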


\section{Notations and Definitions in This Article}\label{app_supp_notation_definitions}
The \textit{$d$-dimensional Hausdorff measure} in Euclidean space is given by
$$\mathcal H^d(A):=\lim\limits_{\delta\rightarrow 0}\mathcal{H}^d_\delta(A)\quad\text{ where $\mathcal{H}^d_\delta(A)=\inf\bigg\{\sum\limits_{i=1}^\infty\diam(U_i)^d: A\subseteq \bigcup\limits_{i=1}^\infty U_i,\quad\diam(U_i)<\delta\bigg\}$}$$
The \textit{$d$-dimensional packing measure} is given by

For a set  $A \subseteq \mathbb{R}^D $, the \textit{Hausdorff dimension} \( \dim_H(A) \) is defined as:
    \[\dim_H(A) = \inf \{ d \geq 0 : \mathcal H^d(A) = 0 \}\]
    For a compact set $ A \subseteq \mathbb{R}^D $, the \textit{upper and lower box dimensions} are defined as:
    \[\overline{\dim}_B(A) = \limsup_{\delta \to 0} \frac{\log N_\delta(A)}{-\log \delta}, \quad \underline{\dim}_B(A) = \liminf_{\delta \to 0} \frac{\log N_\delta(A)}{-\log \delta}\]
where \( N_\delta(A) =\min\{n\in\mathbb N:e_{n,\infty}(A)\leq\delta\}\) is the minimum number of sets of diameter at most \( \delta \) needed to cover \( A \).
Following the conventions in \cite{dai2008quantization}, we can define the corresponding dimensions for probabilities as:
\begin{align*}
    &\dimh{P}=\inf\limits_{\substack{E\in\mathcal B(\bbrD)\\P(E)=1}}\dim_H(E)\quad\quad \dimp{P}=\inf\limits_{\substack{E\in\mathcal B(\bbrD)\\P(E)=1}}\dim_{P}(E)\quad\quad\\&\ldimb{*}{P}=\inf\limits_{\substack{E\in\mathcal B(\bbrD)\\P(E)=1}}\ldimb{}{E}\quad\quad\udimb{*}{P}=\inf\limits_{\substack{E\in\mathcal B(\bbrD)\\P(E)=1}}\udimb{}{E}
\end{align*}
A finite Borel measure $\mu$ is called \textit{Ahlfors regular of dimension $D$} if $\mu$ has compact support and satisfies the property that there exist $r_0>0$ and $c>0$ such that for any $0<r<r_0$ and $a\in\supp{\mu}$ we have $\frac{1}{c}r^D\leq\mu(B(a,r))\leq cr^D$. A set $M\in\bbrD$ is Ahlfors regular in dimension $d$ if the measure $\mathcal H^{D'}|_{M}:=\mathcal H^{D'}(\cdot\cap M)$ is Ahlfors regular of dimension $D'$. 
We denote Cartesian products as $\bigtimes\limits_{i=1}^n S_i:=(\bigtimes\limits_{i=1}^{n-1}S_i)\times S_n$ for $n\geq 2$. We denote the Hausdorff distance between two sets $A$ and $B$ by $d_H(A,B)$. For $\alpha\subseteq \bbrD$ and $K\subseteq\bbrD$, the Voronoi region of $K$ generated by $a\in\alpha$ is defined by $W(a|\alpha)=\{x\in K:\rho(x,a)=\min\limits_{b\in \alpha}\rho(x,b)\}$. $\{W(a|\alpha):a\in\alpha\}$ is called the Voronoi partition of $K$. In this article, for $\beta\subseteq \bbrD$ with $b\in\beta$ denoting each element, for $K\subseteq\bbrD$, we would denote $\{A_b\}_{b\in\beta}$ or $\{E_b\}_{b\in\beta}$ to denote the corresponding Voronoi partition. $\conv{\cdot}$ denotes a convex hull of a set. $\diam(\cdot)$ stands for the diameter of a set. $\mathcal{B}(P)$ denotes the Borel $\sigma$-algebra of $P$. $\mathcal{B}(E)$ denotes the Borel $\sigma$-algebra generated by $E$ under the Euclidean topology. For two measure spaces $(X,\mathcal M,\mu)$ and $Y$ a space, with $f:X\rightarrow Y$ a function, then $\mathcal{N}=\{E\subset Y:f^{-1}(E)\in\mathcal M\}$ is a $\sigma$-algebra, and we can denote the push-forward measure $f_*\mu(E)=\mu(f^{-1}(E))$ for $E\in\mathcal N$. $2^Y$ stands for the power set of $Y$. We state the finite version of Vitali covering lemma in the following. Let $\{B_i\}_{1\leq i\leq n}$ be any finite collection of balls contained in an arbitrary metric space. Then there exists a subcollection $\{B_{j_k}\}_{1\leq k\leq m}$ of these balls which are disjoint and satisfy $\bigcup\limits_{1\leq i\leq n}B_n\subseteq\bigcup\limits_{1\leq k\leq m}3B_{j_k}$.

$\,$

$\,$
\printbibliography


\end{document}